\numberwithin{equation}{section} 
\newtheorem{corollary}{\sc Corollary}[section]
\newtheorem{theorem}{\sc Theorem}[section]
\newtheorem{lemma}{\sc Lemma}[section]
\newtheorem{remark}{\sc Remark}
\newtheorem{definition}{\sc Definition}[section]
\def\div{\operatorname{div}}
\def\n{\nonumber}
\def\nn{{\scriptstyle \mathcal{N} }}
\def\nnn{{\scriptscriptstyle \mathcal{N} }}
\def\tt{{\scriptstyle \mathcal{T} }}
\def\ttt{{\scriptscriptstyle \mathcal{T} }}
\def\ss{{\scriptstyle \mathcal{S} }}
\def\sss{{\scriptscriptstyle \mathcal{S} }}
\def\dvt{{ \delta v \cdot \tau}}
\def\dvp{{ \delta v' \cdot \tau}}
\def\dt{ \nabla_\ttt}
\def\pp{{\scriptstyle \mathscr{P} }}
\def\q{{\scriptstyle \mathscr{Q} }}
\def\x{{\scriptstyle \mathscr{X} }}
\def\y{ {\scriptstyle \mathscr{Y} }}
\def\X{\mathscr{X}}
\def\Y{\mathscr{Y}}
\def\Z{\mathscr{Z}}
\def\P{\mathscr{P}}
\def\eo{\mathscr{E}_1}
\def\eo{{\mathrm{e}_1}}
\def\et{\mathscr{E}_2}
\def\et{{\mathrm{e}_2}}
\def\p{\partial}
\def\Op{{\Omega^+}}
\def\Om{{\Omega^-}}
\def\D{{\mathcal D}}
\def\bdy #1{\partial #1}
\def\cls #1{\overline {#1}}
\def\um{{u^-}}
\def\umo{{u_1^-}}
\def\umt{{u_2^-}}
\def\dn{\boldsymbol{ \delta} \eta }
\def\du{\boldsymbol{ \delta} u^-}
\def\duo{\boldsymbol{ \delta} u^-_1}
\def\dut{\boldsymbol{ \delta} u^-_2}
\def\oz{\overline{z}}
\def\uz{\underline{z}}
\def\vr{\vec{r}}
\def\vl{\vec{l}}
\def\M{\mathscr M}
\def\aleph{\mathscr K}
\title[No splash singularities for  vortex sheets]
{On the impossibility of finite-time splash singularities for vortex sheets}
\author[D. Coutand]{Daniel Coutand}
\address{CANPDE, Maxwell Institute for Mathematical Sciences and department of Mathematics,
Heriot-Watt University, Edinburgh, EH14 4AS, UK}
\email{D.Coutand@ma.hw.ac.uk}
\author[S. Shkoller]{Steve Shkoller}
\address{Mathematical Institute,
University of Oxford,
Andrew Wiles Building,
Radcliffe Observatory Quarter,
Woodstock Road,
Oxford, OX2 6GG, UK}
\email{shkoller@maths.ox.ac.uk}
\date{June 30, 2015}
\keywords{vortex sheets, Euler equations, water waves, blow-up, interface singularity, splash singularity, splat singularity}
\begin{document}

\begin{abstract} In fluid dynamics,
an interface {\it splash} singularity occurs when a locally smooth interface self-intersects in finite time.  By means of elementary arguments, we prove that such a 
singularity cannot occur in finite time for vortex sheet evolution, i.e. for the two-phase incompressible Euler equations.    
We prove this by contradiction;  we assume that a splash singularity does indeed occur in finite time.  Based on this assumption, we find precise
 blow-up rates for the components of the
velocity gradient which, in turn, allow us to characterize
the geometry of the evolving interface just prior to self-intersection.  The constraints on the geometry then lead to an impossible outcome, showing that our assumption of a finite-time splash singularity was false.
\end{abstract}

\maketitle

\tableofcontents


\section{Introduction}
\subsection{The interface splash singularity}
The fluid interface  {\it splash singularity}   was introduced by  Castro, C\'{o}rdoba,  Fefferman, Gancedo, \& G\'{o}mez-Serrano in \cite{CaCoFeGaGo2013}.   
A {\it splash singularity} occurs when a fluid interface remains locally smooth but self-intersects in finite time.     For the two-dimensional water waves problem,  Castro, C\'{o}rdoba,  Fefferman, Gancedo, \& G\'{o}mez-Serrano
\cite{CaCoFeGaGo2013} showed that a splash singularity occurs in finite time using methods from complex analysis together with a clever transformation of the equations.
In  Coutand \& Shkoller \cite{CoSh2014}, we   showed the existence of a finite-time splash singularity for the water waves equations in two or three-dimensions (and, more generally, for the one-phase Euler equations), using a very different 
approach, founded  upon an approximation of the self-intersecting fluid domain by a sequence of smooth fluid domains, each with non  self-intersecting
boundary.

\subsection{The two-fluid incompressible Euler equations}
A natural question, then, is whether a splash singularity can occur for vortex sheet evolution, in which two phases of the fluid are present.
   Consider the two-phase incompressible Euler equations:
Let  $\mathcal{D} \subseteq \mathbb{R}^2  $ denote an open, bounded set,
which comprises the volume occupied by two incompressible and inviscid fluids with
different densities. At the initial time $t=0$,   we let $\Op$ denote the volume occupied by the {\it lower}
fluid with density $\rho^+$ and we let $\Om$ denote the volume occupied by
the {\it upper} fluid with density $\rho^-$.   Mathematically, the sets
$\Op$ and $\Om$ denote two disjoint open bounded
subsets of $\D$ such that $\cls\D = \cls{\Op} \cup \cls{\Om}$ and $\Op \cap \Om = \emptyset$.
The {\it material interface} at time $t=0$ is given by
$\Gamma := \cls\Op \cap \cls\Om$, and
$\bdy\D = \bdy (\Om \cup \Op) / \Gamma$.   (We can also consider the case that $\Op =  \mathbb{T}  \times (-1,0)$, $\Om =  \mathbb{T}  \times (0,1)$,
and $\Gamma = \mathbb{T}  \times \{0\}$.)

For time $t \in [0,T]$ for some $T>0$ fixed, 
 $\Op(t)$ and $\Om(t)$ denote the time-dependent volumes of the two fluids, respectively, separated
by the moving material interface $\Gamma(t)$.
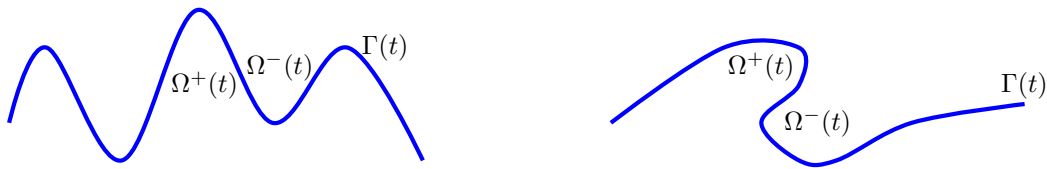
\begin{figure}[h]
\begin{tikzpicture}[scale=0.5]
    \draw (10,2) node { $\Gamma(t)$}; 
    \draw (7.2,1.5) node { $\Omega^-(t)$}; 
    \draw (5.2,1.) node { $\Omega^+(t)$}; 
    \draw[color=blue,ultra thick] plot[smooth,tension=.6] coordinates{( 0,0) (1,2) (3,-1) (5, 3) (7, 0) (9, 2) (11,-1) };
    
     \draw (27,1) node { $\Gamma(t)$}; 
    \draw (20,1.5) node { $\Omega^+(t)$}; 
    \draw (21.5,0) node { $\Omega^-(t)$}; 
    \draw[color=blue,ultra thick] plot[smooth,tension=.6] coordinates{( 16,0) (19,2) (21,2) (21,1)   (20, 0) (21, -1) (22,-1) (24,0) (27, .5)};
\end{tikzpicture} 
\caption{Two examples of the evolution of a vortex sheet $\Gamma(t)$  by the Euler equations.   The two fluid regions are denoted by
$\Omega^+(t)$ and $\Omega^-(t)$.} \label{fig1}
\end{figure}
 Let
$u^\pm$
and $p^\pm$ denote the velocity field and pressure function,
respectively, in $\Omega^\pm(t)$.
A  planar vortex sheet $\Gamma(t)$ evolves according to the incompressible and irrotational Euler equations:
\begin{subequations}\label{euler}
\begin{alignat}{2}
\rho^\pm(u^\pm_t + u^\pm\cdot D u^\pm) + D p^\pm &=- \rho^\pm g \et &&\qquad
\text{in \ \
$\Omega^\pm(t)$}\,, \\
\operatorname{curl} u^\pm=0 , \ \ \ \div u^\pm &= 0 &&\qquad \text{in \ \
$\Omega^\pm(t)$}\,, \\
p^+-p^- &= \sigma H &&\qquad \text{on \ \ $\Gamma(t)$}\,,\\
(u^+ - u^-)\cdot \nn  &=0 &&\qquad \text{on \ \ $\Gamma(t)$}\,, \\
u^-\cdot N  &= 0 &&\qquad\text{on \ \ $\bdy\D$}\,,\label{nonslipbc}\\
u(0) &= u_0 &&\qquad \text{on \ \ $\{t=0\}\times \D$}\,,\\
\mathcal{V} (\Gamma(t)) & = u^+(t) \cdot \nn(t) && \,,  \label{normalspeed}
\end{alignat}
\end{subequations}
where $ \mathcal{V} (\Gamma(t))$ denotes the speed of the moving
interface $\Gamma(t)$ in the normal direction, and $\nn(\cdot ,t)$ denotes the
outward-pointing unit normal to $\Gamma(t)$ (pointing into $\Omega^-(t)$), $N$ denotes the outward-pointing unit normal to the fixed boundary $\partial \D$, $g$ denotes gravity, and $\et$ is the vertical unit vector $(0,1)$.  Equation (\ref{normalspeed}) indicates
that 
$\Gamma(t)$ moves with the normal component of the
fluid velocity.  The variables $0 < \rho^\pm$
denote the densities of the
two fluids occupying $\Omega^\pm(t)$, respectively,  $H(t)$ is
twice the mean curvature of $\Gamma(t)$, and $\sigma >0$ is the surface
tension parameter which we will henceforth set to one.   For notational simplicity, we will also set $\rho^+=1$ and $\rho^-=1$.    

Via an elementary proof by contradiction, we prove that a finite-time splash singularity cannot occur for vortex sheets governed by (\ref{euler}).   We rule-out a single splash
singularity in which one self-intersection occurs, as well as the case that many (finite or infinite) simultaneous self-intersections occur. We also rule out a {\it splat}
singularity, wherein the interface $\Gamma(t)$ self-intersects along a curve (see \cite{CaCoFeGaGo2013} and \cite{CoSh2014} for a precise definition).

\subsection{Outline of the paper}
In Section \ref{sec_Lagrangian}, we introduce Lagrangian coordinates (using the flow of $u^-$) for the purpose of fixing the domain and the material interface.  Rather than using an arbitrary parameterization of the evolving interface $\Gamma(t)$, we specifically use the Lagrangian parameterization which has some important features  for our analysis that general parameterizations do not.  With this parameterization defined,
we 
state the main theorem of the paper in Section \ref{sec_mainresult} which states that a finite-time splash singularity cannot occur in this setting.  In Section \ref{sec_evolution}, we derive the evolution equations for the vorticity along the
interface as well as the evolution equation for the tangential derivative of the vorticity; the latter plays a fundamental role in our analysis.
In particular,  under the assumption that the tangential derivative of vorticity blows-up in finite time, we find the precise blow-up rates for
the components of $ \nabla \um(\cdot ,t)$.   Letting $\eta( \cdot , t): \Gamma \to \Gamma(t)$ denote the Lagrangian parameterization of the vortex sheet,
and supposing that the two reference points $x_0$ and $x_1$ in $\Gamma$ evolve toward one another so that $|\eta(x_0,t) - \eta(x_1,t)| \to 0$
as $t \to T$, 
in  Section \ref{sec_geometry}, we find the evolution equation for the distance $ \dn (t) = \eta(x_0,t) - \eta(x_1,t)$ between the two contact points.  We
can determine that the two portions of the curve $\Gamma(t)$ converge towards self-intersection in an essentially horizontal approach.

Finally, using the evolution equation for $\dn(t)$, we prove our main theorem
in Section \ref{sec_proof}; in particular, we show that our assumption of a finite-time self-intersection of the curve $\Gamma(t)$ as $t \to T$ leads to
the following contradiction:  we first show that $u^-_1(\eta(x_0,T),T) - u^-_1(\eta(x_1,T),T) =0$, where $\eo$ is the tangent vector at 
$\eta(x_0,T)$, and then we proceed to show that 
$u^-_1(\eta(x_0,T),T) - u^-_1(\eta(x_1,T),T) \neq 0$.   We first arrive at this contradiction for a single splash singularity, meaning that one self-intersection
point exists for $\Gamma(T)$; then, we proceed to prove that a finite (or even infinite) number of self-intersections also cannot occur.  We conclude
by showing that a {\it splat} singularity, wherein $\Gamma(T)$ self-intersects along a curve rather than a point, also cannot occur.

\subsection{A brief history of prior results}

\subsubsection{Local-in-time well-posedness} We begin with a short history of the local-in-time existence theory for the free-boundary
incompressible Euler equations.
For the {\it irrotational} case of the water waves problem, and for
2-D fluids (and hence 1-D interfaces), the earliest local existence results
were obtained by Nalimov \cite{Na1974}, Yosihara \cite{Yo1982}, and 
Craig \cite{Cr1985} for initial data near equilibrium.  Beale, Hou, \&
Lowengrub \cite{BeHoLo1993} proved that the linearization  of the 2-D water wave 
problem is well-posed if the Rayleigh-Taylor sign condition
$\frac{\p p}{\p n} < 0$ on $\Gamma \times \{t=0\} $
is satisfied by the initial data (see \cite{Ra1878} and \cite{Taylor1950}).
Wu \cite{Wu1997} established local well-posedness for the 2-D 
water waves problem and showed that, due to irrotationality, the Taylor sign condition is satisfied.  
Later Ambrose \& Masmoudi \cite{AmMa2005}, proved local
well-posedness of the 2-D water waves problem as the limit of zero surface tension.   Disconzi \& Ebin \cite{DiEb2014,DiEb2015} have considered the limit of surface tension tending to infinity.
For 3-D fluids (and  \mbox{2-D} interfaces), Wu \cite{Wu1999} used Clifford analysis to prove local existence of the 3-D water waves problem with {\it infinite depth}, again showing that the Rayleigh-Taylor sign 
condition is always satisfied in the irrotational case by virtue of the maximum
principle holding for the potential flow.  Lannes \cite{La2005} provided a proof
for the {\it finite depth case with varying bottom}.     Recently, Alazard, Burq \& Zuily \cite{AlBuZu2012} have established low regularity solutions (below
the Sobolev embedding) for the
water waves equations.   

 The first local well-posedness result for
the 3-D incompressible Euler equations without the irrotationality assumption was obtained by 
Lindblad \cite{Li2004}  in the case that the domain is diffeomorphic to the unit ball  using a Nash-Moser iteration.
Coutand \& Shkoller \cite{CoSh2007} proved  local well-posedness  for arbitrary initial geometries that have at least $H^3$-class
boundaries  without derivative loss.     Shatah \& Zeng \cite{ShZe2008} established a priori estimates
for this problem using an infinite-dimensional geometric formulation, and  Zhang \& Zhang proved well-poseness by extending
the complex-analytic method of Wu \cite{Wu1999} to allow for vorticity.  Again, in the latter case the domain was with infinite depth.

\subsubsection{Long-time existence}  It is of great interest to understand if solutions to the
Euler equations can be extended for all time when the data is sufficiently smooth and small, or if a finite-time
singularity can be predicted for other types of initial conditions.  

 Because of irrotationality,  the water waves problem does not suffer from vorticity concentration;  therefore, singularity formation
 involves only the loss of regularity of the interface or interface collision.   In the case that the irrotational fluid is infinite in the horizontal directions,
 certain dispersive-type properties can be made use of.
 For sufficiently smooth and small data,
 Alvarez-Samaniego \& Lannes \cite{AlLa2008} proved existence of solutions to the
water waves problem  on large time-intervals (larger than predicted by energy estimates), 
and provided a rigorous justification for a variety of  asymptotic regimes.   By constructing a transformation to remove
the quadratic nonlinearity, combined with decay estimates for the linearized problem (on the infinite half-space domain), Wu \cite{Wu2009} established  an almost global existence result (existence on time intervals which are exponential in the size of the data) for the 2-D water
waves problem with sufficiently small data.     In a different framework, Alazard, Burq \& Zuily \cite{AlBuZu2012} have also proven this result.
  Using position-velocity potential holomorphic coordinates, Hunter, Ifrim, \& Tataru \cite{HuIfTa2014} have also
proved almost global existence of the 2-D water waves problem.

  Wu \cite{Wu2011}  proved global existence in 3-D for small data.   Using
the method of spacetime resonances,  Germain, Masmoudi, and Shatah \cite{GeMaSh2009} also established global existence
for the 3-D irrotational problem for sufficiently small data.     More recently, global existence for the 2-D water waves problem with small data was
established by Ionescu \& Pusateri \cite{IoPu2014},  Alazard \& Delort \cite{AlDe2013a,AlDe2013b}, and Ifrim \& Tataru \cite{IfTa2014a,IfTa2014b}.

\subsubsection{The finite-time splash  and splat singularity}  The finite-time {\it splash} and {\it splat singularities} were introduced by
Castro, C\'{o}rdoba, Fefferman, Gancedo, \& G\'{o}mez-Serrano \cite{CaCoFeGaGo2013}; therein, using methods from complex analysis,
they proved that a locally smooth interface can self-intersect in finite time  for the 2-D water waves equations and hence established the existence
of finite-time splash and splat singularites (see also  \cite{CaCoFeGaLo2011a}
and \cite{CaCoFeGaLo2011b}).    In Coutand \& Shkoller \cite{CoSh2014}, we established the  existence of finite-time splash and splat singularities for the 2-D and  3-D water waves
and Euler equations (with vorticity) using an approximation of the self-intersecting domain  by a sequence of standard Sobolev-class domains, each with
non self-intersecting boundary.    Our approach 
 can be applied to many  one-phase hyperbolic free-boundary problems, and shows that splash singularities can occur with surface tension, with
 compressibility, with magnetic fields, and for many one-phase hyperbolic free-boundary problems.

Recently, Fefferman, Ionescu,  \& Lie \cite{FeIoLi2013} have proven that a splash singularity cannot occur for planar vortex sheets (or two-fluid interfaces) with surface tension.  Their proof relies on a 
sophisticated harmonic analysis of the integral kernel of the Birkhoff-Rott equation.    Other than vortex sheet evolution for the two-phase Euler equations, it is of interest to determine the possibility of
finite-time splash singularities for other fluids models.   In this regard, 
 Gancedo \& Strain \cite{GaSt2013} have recently shown that a  finite-time splash singularity cannot occur for the three-phase Muskat equations.   In 
addition to the study of other fluids models, it is also of great interest to determine a mechanism for the loss of regularity of the evolving interface, 
which, in turn, could allow for finite-time self-intersection.

\section{Fixing the fluid domains using the Lagrangian flow of $u-$}\label{sec_Lagrangian}

Let $\tilde\eta$ denote the Lagrangian flow map of $u^-$ in $\Omega^-$ so that 
$\tilde\eta_t(x,t) = u^-(\tilde\eta(x,t),t) $ for $x\in \Omega^-$ and $t \in (0,T)$, with initial condition 
$\tilde\eta(x,0)=x$.   Since $ \operatorname{div} u^-=0$, it follows that $ \det \nabla\tilde \eta=1$.  By a theorem of \cite{DaMo1990}, we define $ \Psi: \Omega^+ \to \Omega^+(t)$ as incompressible extension of $\tilde \eta$, satisfying
$\det \nabla \Psi =1$ and $\|\Psi\|_{H^s(\Omega^+)} \le C \|\eta^-|_\Gamma\|_{H^{s-1/2}(\Gamma)}$ for $s >2$.  We then set 
$$
\eta(x,t)= \left\{
\begin{array}{cc}
\tilde \eta(x,t), & x \in \overline{\Omega^-} \\
\Psi(x,t), & x \in \Omega^+
\end{array}
\right. .
$$
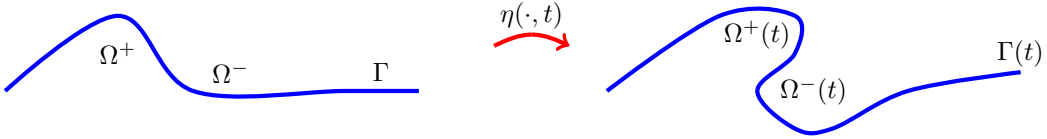
\begin{figure}[h]
\begin{tikzpicture}[scale=0.5]
    \draw (10,.5) node { $\Gamma$}; 
    \draw (3,1) node { $\Omega^+$}; 
    \draw (6,.5) node { $\Omega^-$}; 
    \draw[color=blue,ultra thick] plot[smooth,tension=.6] coordinates{( 0,0)  (3,2) ( (5, 0) (9, 0) (11,0) };
    
    \draw (14,2) node { $\eta( \cdot ,t)$}; 
      \draw[ultra thick, red]  (13,1.2) sin (14,1.5);  
      \draw[ultra thick, red] [->] (14,1.5) cos (15,1.2);  

     \draw (27,1) node { $\Gamma(t)$}; 
    \draw (20,1.5) node { $\Omega^+(t)$}; 
    \draw (21.5,0) node { $\Omega^-(t)$}; 
    \draw[color=blue,ultra thick] plot[smooth,tension=.6] coordinates{( 16,0) (19,2) (21,2) (21,1)   (20, 0) (21, -1) (22,-1) (24,0) (27, .5)};
\end{tikzpicture} 
\caption{The mapping $\eta( \cdot , t)$ fixes the two fluid domains and the interface.    The moving interface $\Gamma(t)$ is the image of $\Gamma$ by $\eta( \cdot ,t)$.} \label{fig2}
\end{figure}

We define the following quantities set on the fixed domains and boundary:
\begin{alignat*}{2}
v^\pm&= u^\pm \circ \eta\,, &&\qquad \text{in \ \ $\Omega^\pm \times [0,T]$}\,, \\
q^\pm &= p^\pm \circ \eta \,, &&\qquad \text{in \ \ $\Omega^\pm \times [0,T]$}\,, \\
A&= [ \nabla \eta ]^{-1} \,,&&\qquad \text{in \ \ $\mathcal{D}  \times [0,T]$}\,, \\
\mathcal{H} &= H \circ \eta \,, &&\qquad \text{on \ \ $\Gamma \times [0,T]$}\,, \\
\delta v &= v^+ - v^-  \,, &&\qquad \text{on \ \ $\Gamma \times [0,T]$}\,, 
\end{alignat*}

The  momentum equations (\ref{euler}a) can then be written on the fixed domains $\Omega^\pm$ as 
\begin{subequations}\label{leuler}
\begin{alignat}{2}
v^+_t +  \nabla v ^+\, A\, (v^+-\Psi_t ) + A^T \nabla q^+&= -g \et &&\qquad
\text{in \ \
$\Omega^+ \times [0,T]$}\,, \\
v^-_t + A^T \nabla q^- &= -g \et &&\qquad
\text{in \ \
$\Omega^- \times [0,T]$}\,,
\end{alignat}
\end{subequations}
and the pressure jump condition (\ref{euler}c) is
$ \delta q = \mathcal{H} $ on $\Gamma \times [0,T]$, where $ \delta q = q^+ -q^-$.

Using  the Einstein summation convention, $[ \nabla v^+ \, A\, (v^+-\Psi_t )]^i = {v^+}^i,_r A^r_j(v^+_j-\p_t\Psi_j)$.    This is the advection term; when
$\Psi$ is the identity map, we recover the Eulerian description, while if $\Psi$ is the Lagrangian flow map, then we recover the Lagrangian description.  The form
(\ref{leuler}.a) is called the Arbitrary Lagrangian Eulerian (ALE) description of the fluid flow in $\Omega^+$

\section{The main result} \label{sec_mainresult}
In \cite{ChCoSh2008, ChCoSh2010}, we proved that if at time $t=0$,  $u_0^\pm \in H^k(\Omega^\pm)$ and  $\Gamma$ of class $H^{k+1}$ for integers $k\ge 3$,
then there exists a solution $(u^\pm (\cdot , t) , \Gamma(t))$ of the system (\ref{euler}) satisfying
$u^\pm \in L^ \infty (0,T_0; H^k(\Omega^\pm(t)))$ with $ \Gamma(t)$ being of class $H^{k+1}$, for all $t\in [0,T_0]$, for some $T_0>0$.   (See also \cite{ShZe2011} and \cite{Pusateri2011}.)

\begin{theorem}[No finite-time splash singularity]\label{thm1} 
Let $\D$ be a bounded domain of class $H^4$. We assume the existence of a closed curve $\Gamma\subset\D$ of class 
$W^{4, \infty }$ which does not self-intersect and such that $\D=\Omega^+\cup\Gamma\cup\Omega^-$, where the open sets 
$\Omega^+$ and $\Omega^-$ are connected and disjoint and do not intersect $\Gamma$. 
Our assumption of non self-intersection means that $\Omega^+$ and $\Omega^-$ are both (locally) on one side of $\Gamma$.

Let $u^\pm$ be a solution to (\ref{euler}) on $[0,T)$ such that $u^\pm \in H^3(\Omega^\pm(t))$ and $ \Gamma(t)$ is of class $W^{4,\infty }$ 
for each $t \in [0,T)$.   Suppose that
\begin{enumerate}
\item  $\Omega^+(t)$ and $\Omega^-(t)$ are both (locally) on one side of $\Gamma(t)$ for all $t\in [0,T)$;
\item  there exists a  constant   $0 <\mathcal{M}<\infty $, such that
$$\text{ for all  } \ t\in [0,T)\,,  \ \ \operatorname{dist} (\Gamma(t),\partial\D)>{\frac{1}{\mathcal{M} }} \,.$$
and
\begin{equation}\label{bounds}
\sup_{t\in [0,T)} \left( \left\|u^+( \cdot , t) \right\|_{W^{2, \infty }(\Gamma(t))}    + \left\| H ( \cdot , t) \right\|_{W^{2, \infty }(\Gamma(t))}  \right) <  \mathcal{M} \,.
\end{equation} 
\end{enumerate}
Then $\Gamma(t)$ 
cannot self-intersect \textcolor{black}{at time $t=T$}; that is, there does not exist a finite-time splash singularity.
  \end{theorem}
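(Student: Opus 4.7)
The plan is to argue by contradiction: assume that a finite-time splash singularity occurs at time $t=T$, meaning that there exist two reference points $x_0,x_1\in\Gpm$ whose Lagrangian trajectories collide at a common point $p^{\ast}=\eta(x_0,T)=\eta(x_1,T)$, while the curve $\Gpm(t)$ remains locally smooth on $[0,T)$. Working throughout on the fixed reference domains introduced in Section~\ref{sec_Lagrangian}, the first task is to translate the kinematic matching condition, the pressure jump $\delta q=\mathcal{H}$, and the irrotationality of $u^\pm$ into a closed evolution equation for the vortex-sheet strength $\delta v\cdot\tau$ along $\Gpm$. Differentiating once more in the tangential direction $\nabla_\ttt$ produces the transport-type equation for $\nabla_\ttt(\delta v\cdot\tau)$ announced in Section~\ref{sec_evolution}, which will serve as the central dynamical variable of the argument.

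Next, under the splash hypothesis, the shrinking neck $|\dn(t)|\to 0$ with $\dn(t)=\eta(x_0,t)-\eta(x_1,t)$ must force a loss of regularity of $\um$ that is reflected in the blow-up of $\nabla_\ttt(\delta v\cdot\tau)$. Since $\um$ is curl- and divergence-free in the pinching region, and the uniform $W^{2,\infty}$ bounds on $\up$ and $H$ from \eqref{bounds} prevent the outside quantities from absorbing the singularity, elliptic estimates for $\um$ on the neck combined with the pressure jump equation should yield precise powers of $|\dn(t)|$ as blow-up rates for each component of $\nabla\um$ in that region. Inserting these rates into a Taylor expansion of $\eta$ centred at each of the colliding points $x_0$ and $x_1$ will show that $\dn(t)\cdot\et$ vanishes strictly faster than $\dn(t)\cdot\eo$, so that the two branches of $\Gpm(t)$ meet with tangent vectors aligning both with each other and with $\eo$; this is the essentially horizontal approach described in Section~\ref{sec_geometry}.

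The contradiction is then obtained by comparing two evaluations of the horizontal velocity jump $\umo(\eta(x_0,T),T)-\umo(\eta(x_1,T),T)$. On one hand, since
\begin{equation*}
\frac{d}{dt}\bigl(\dn(t)\cdot\eo\bigr)=\umo(\eta(x_0,t),t)-\umo(\eta(x_1,t),t),
\end{equation*}
integrating backwards from $T$, together with the precise rate at which $\dn(t)\cdot\eo\to 0$ extracted in the previous step, will force the $t\to T$ limit of the right-hand side to equal zero. On the other hand, applying Bernoulli's identity along the two branches of $\Gpm(t)$ meeting at $p^{\ast}$, using the pressure jump $\delta q=\mathcal{H}$ together with the uniform control on $\up$ and $H$, will produce a strict sign for the same horizontal jump, yielding the desired contradiction.

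The main obstacle is the derivation of the blow-up rates for the components of $\nabla\um$ inside the pinching neck: one must squeeze enough information out of the transport equation for $\nabla_\ttt(\delta v\cdot\tau)$ to simultaneously force the horizontality of the collision and to upgrade Bernoulli's law into a quantitative strict inequality along the approaching branches. Once the single-point splash has been ruled out, the extension to finitely or countably many simultaneous contact points follows by applying the single-point contradiction locally at each collision, and the splat case is handled by specialising the argument to an arbitrary point on the contact arc.
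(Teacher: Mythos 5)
Your outline reproduces the paper's overall architecture (contradiction, evolution equation for $\nabla_\ttt(\delta v\cdot\tau)$, blow-up rates for $\nabla\um$, near-horizontal approach, and a double evaluation of $\umo(\eta(x_0,T),T)-\umo(\eta(x_1,T),T)$), but the two steps that actually produce the contradiction are not supported by workable mechanisms. First, the rate extraction: the paper's $1/(T-t)$ rates do not come from elliptic estimates on the shrinking neck expressed in powers of $|\dn(t)|$; they come from the Riccati structure $\X_t=\X^2-\mathfrak{A}\X-\mathcal{A}$ of the equation for $\X=G\dvp$, which forces $\X(x_0,t)(T-t)\to 1$ exactly, and the smallness $|\partial u_2^-/\partial x_1|\le\epsilon/(T-t)$ then follows from decomposing $\nabla\um$ in the $(\tt,\nn)$ frame and using that $\tt$ is nearly horizontal near the contact, together with $\operatorname{div}\um=\operatorname{curl}\um=0$ and the maximum principle. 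Without this precise constant $1$ and the $\epsilon$-smallness of the off-diagonal entry, the endgame does not close. Relatedly, your claim that $\dn\cdot\et$ vanishes faster than $\dn\cdot\eo$ is not what is needed (nor what is true in general); the relevant comparison in the paper runs the other way and only under the hypothesis $\duo(T)=0$.

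Second, and more seriously, both halves of your final contradiction are gapped. To conclude $\duo(T)=0$ from $\frac{d}{dt}\dn_1=\duo$ you would need $\dn_1(t)=o(T-t)$, which you have not established; $\dn_1(t)\to 0$ at rate $T-t$ is perfectly consistent with $\duo(T)\ne 0$. The paper instead proves $\duo(T)=0$ by a path integral from $\eta(x_0,t)$ to $\eta(x_1,t)$ through a vertical segment of length $O(T-t)$ inside $\Om(t)$ (on which $|\partial u^-_1/\partial x_2|=|\partial u^-_2/\partial x_1|\le\epsilon/(T-t)$) plus a nearly flat arc, using uniform continuity of $\um$ away from the cusp. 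For the other half, there is no reason Bernoulli's identity with the pressure jump $\delta q=\mathcal{H}$ yields a strict sign for the horizontal velocity jump: it involves $\partial_t\phi^\pm$ and $|u^\pm|^2$, neither of which you control with a sign at the colliding points. The paper's actual argument is entirely different: assuming $\duo(T)=0$ it derives $|\dn_1(t)|\lesssim(T-t)^2$, feeds this into the ODE $\partial_t\dn=\M(t)\dn$ whose coefficients carry the precise $1/(T-t)$ rates, and obtains the lower bound $|\dn(t)|^2\gtrsim(T-t)^{4\epsilon}$, incompatible with $|\dn(t)|\lesssim(T-t)$. Finally, the multiple-splash and splat cases are not routine localizations: the former requires cut-off/stream-function elliptic estimates to control $\nabla\um$ on the lateral boundaries of the region between two contacts, and the latter is excluded by a separate incompressibility (area-preservation) argument, not by applying the single-point case at a point of the contact arc.
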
 
Note, that we give a precise definition for the $W^{k, \infty }(\Gamma(t))$-norm below in Definition \ref{def_W2infi}.
\begin{remark} The condition (1) in  Theorem \ref{thm1}, requiring   $\Omega^+(t)$ and $\Omega^-(t)$ to  both (locally) be on one side of 
$\Gamma(t)$ for all $t\in [0,T)$, is equivalent to requiring the chord-arc function to be \textcolor{black}{strictly positive for all $t$ in $[0,T)$ (without specifying a lower bound as $t\rightarrow T$, other than $0$)}. 
 \end{remark}

\begin{remark} 
In Theorem \ref{thm1}, we have assumed that $\D=\Omega^+(t)\cup\Gamma(t)\cup\Omega^-(t)$ is a bounded domain simply because the local well-posedness
theorem for the two-phase Euler equations given in \cite{ChCoSh2008} used such a geometry; however, as our proof by contradiction relies on a local analysis in
a spacetime region near an assumed point (or points) of self-intersection of the curve $\Gamma(t)$, we can also treat the case that our two fluids occupy all of $ \mathbb{R}^2  $ or occupy
a channel geometry with periodic boundary conditions in the horizontal direction.

As part of condition (2) in  Theorem \ref{thm1} for the case that $\D$ is bounded, we assume that 
$\operatorname{dist} (\Gamma(t),\partial\D)>{\frac{1}{\mathcal{M} }} $
so that the moving interface $\Gamma(t)$ stays away from the fixed domain boundary $\partial \D$.
\end{remark}

\section{Evolution equations on $\Gamma$ for the vorticity and its tangential derivative}\label{sec_evolution}

\subsection{Geometric quantities  defined on $\Gamma$ and $\Gamma(t)$} \label{subsec::chart} We set
\begin{alignat}{2}
\nn(x,t)&= \text{ unit normal vector field on $\Gamma(t)$}\,, \qquad && n = \nn \circ \eta \n\\
\tt(x,t)&= \text{ unit tangent vector field on $\Gamma(t)$}\,,  && \tau = \tt \circ \eta \n\,.
\end{alignat} 
We choose the unit-normal  $\nn( \cdot ,t)$ to point into $\Omega^-(t)$.    In a sufficiently small neighborhood $ \mathcal{U} $ of the material interface 
$\Gamma$ at $t=0$,  
we choose a local chart $\theta: B(0,1) \to \mathcal{U} $.  The unit ball $B(0,1)$ has coordinates $(x_1,x_2)$, and 
$\theta : \{ (x_1,x_2) \ : \ x_2=0\} \to \mathcal{U} \cap \Gamma$,
$\theta  \{ (x_1,x_2) \ : \ x_2>0\} \to \mathcal{U} \cap \Omega^-$,  and 
$\theta  \{ (x_1,x_2) \ : \ x_2<0\} \to \mathcal{U} \cap \Omega^+$. In order to define a tangent vector, we also assume that the length $|\theta'(x_1,0)|$ of the vector $\theta'(x_1,0)$ is bounded away from $0$ by some constant $C>0$. For notational convenience in our computations, we shall write $\eta \circ \theta$
simply as $\eta$.
We define
$$ G(x,t)=|\eta'(x,t)|^{-1} \,,\text{ where } 
( \cdot )' = \partial ( \cdot ) / \partial x_1\,.$$     
Hence, 
\begin{equation}\label{tau_def}
\tau( x, t) = G \eta'(x,t)\,, \  n( x, t) =G \eta'^\perp(x,t)\,, \  x^\perp=(-x_2,x_1)\,.
\end{equation} 

On $\Gamma(t)$, we  let $ \nabla _\ttt$ denote the tangential derivative, i.e., the derivative in the direction of the unit tangent vector $\tt$.
Let $f$ denote any Eulerian quantity. Then, by the chain-rule, 
\begin{equation}\label{tan_der}
(\nabla _\ttt f ) \circ \eta = G (f \circ \eta )' \,.
\end{equation} 

\begin{definition}[$W^{k, \infty }(\Gamma(t))$-norm] \label{def_W2infi}
For a function $f(\cdot,t) : \Gamma(t) \to \mathbb{R}  $ and integers $k \ge 0$, we  define
$$\|f(\cdot,t)\|_{W^{k,\infty}(\Gamma(t))}= \sum_{i=0} ^k\| \nabla^i _\ttt f(\cdot,t)\|_{L^{\infty}(\Gamma(t))}
\,.$$
\end{definition} 

\begin{remark} From our assumed bounds (\ref{bounds}) we have that $|\nabla_{\ttt} u^+|\le\mathcal{M}$. 
Since $ \operatorname{div} u^+=0$, we have  that $|\nabla_{\nnn} u^+\cdot \nn|=|\nabla_{\ttt} u^+\cdot\tt|\le\mathcal{M}$, 
and since $ \operatorname{curl} u^+=0$,  $|\nabla_{\nnn} u^+\cdot \tt|=|-\nabla_{\ttt} u^+\cdot\nn|\le\mathcal{M}$, which shows that 
\begin{equation}
\label{120814.1}
\|\nabla u^+\|_{L^{\infty}(\Gamma(t))}\le\mathcal{M}\,
\end{equation} (where the norm of a matrix is chosen to be the maximum of the absolute value of all four components).
\end{remark}

\begin{remark}\label{remark2}
We now define $\phi$ to be the flow map of $u^+$ in $\overline{\Omega^+}$. With the chart $\theta$ introduced above, and
with $x=(x_1,0)$,
we then infer from $\phi_t(\theta(x),t)=u^+(\phi(\theta(x),t),t)$ that
$$[\phi_t(\theta(x),t)]'=\nabla u^+(\phi(\theta(x),t),t) \ [\phi(\theta(x),t)]'    \,.
$$
Therefore, 
\begin{equation*}
\frac{d}{dt} \left|[\phi(\theta(x),t)]'\right|^2=2 [\phi(\theta(x),t)]' \cdot \left( \nabla u^+(\phi(\theta(x),t),t) \ [\phi(\theta(x),t)]' \right)
\ge -4 \mathcal{M}|[\phi(\theta(x),t)]'|^2\,,
\end{equation*}
where the inequality follows from  (\ref{120814.1}). Thus
\begin{equation}
\label{120814.2}
|[\phi(\theta(x),t)]'|^2 \ge e^{-4\mathcal{M} t} |\theta'(x)|^2\ge e^{-4\mathcal{M} t} C^2>0 \,.
\end{equation}
 Therefore, the unit tangent vector to $\Gamma(t)$ can be defined simply as
\begin{equation*}
\tt(\phi(\theta(x),t))=\frac{[\phi(\theta(x),t)]'}{|[\phi(\theta(x),t)]'|}\,,
\end{equation*}
or with our notational convention of  writing $ \phi \circ \theta$ simply as $\phi$, 
\begin{equation*}
\tt(\phi)=\frac{\phi'}{|\phi'|}\,.
\end{equation*}
\end{remark}

\begin{remark} Using the same argument as in Remark \ref{remark2},
if $\|\nabla u^-( \cdot ,t) \|_{L^\infty(\Omega(t))}$ is bounded from above (which is the case for $t<T$ for a solution $u^-\in L^\infty (0,T;H^3(\Omega^-(t)))$ so long as
there is no self-intersection of $\Gamma(t)$),  then the flow map $\eta$ of $u^-$ satisfies an identity similar to (\ref{120814.2}), ensuring that
the definition of $G(x,t)$ is well-defined for all $t \in [0,T)$.
\end{remark}

\subsection{Evolution equation for the vorticity on $\Gamma$} 

Equation (\ref{leuler}a) is $v^+_t +  \nabla v ^+\, A\, (v^+-\Psi_t ) + A^T \nabla q^+= -g \et $.  By definition, on  $\Gamma$,
$\Psi_t = v^-$, so that $v^+-\Psi_t = \delta v$.  Since $ \delta v \cdot n=0$ on $\Gamma$, we see that $ \delta v = ( \delta v \cdot \tau) \tau$.
Hence, the advection term can be written (using the Einstein summation convention) as 
$\frac{ \p v^+}{\p x_r} A^r_j \tau_j ( \delta v \cdot \tau)$.
From (\ref{tau_def}), $ \tau_j = G \eta'_j$ which in our local coordinate system is the same as $G \frac{\p \eta_j}{\p x_1}$.   Since $A = [ \nabla \eta ]^{-1} $,
we see that $A^r_j \  \frac{\p \eta_j}{\p x_1} = \delta ^r_1$, where $ \delta ^r_1$ denotes the Kronecker delta.

It follows that  on $\Gamma$, (\ref{leuler}a) takes the form
\begin{equation}\label{lleuler}
v^+_t +G {v^+}' \, \delta v \cdot \tau + A^T \nabla q^+=-g \et \,.
\end{equation} 

Equation  (\ref{leuler}b) does not have the advection term, and remains the same on $\Gamma$.
Subtracting  (\ref{leuler}b)  from (\ref{lleuler}a), taking the scalar product of this difference with $ \tau $,  and using that
$ \delta q=\mathcal{H}$,
yields
$$
\delta v_t \cdot \tau + G {v^+}' \cdot \tau ( \dvt) + G \mathcal{H}'=0 \,,
$$
from which it follows that
\begin{equation}\label{aom3}
(\dvt)_t + G {v^+}' \cdot \tau ( \dvt) + G \mathcal{H}'=0
\text{ on } \Gamma \times [0,T) \,,
\end{equation} 
where we have used the fact that $\tau_t = G (v' \cdot n )n$ and $ \delta v\cdot n=0$.  Using (\ref{tan_der}), we write (\ref{aom3}) as
\begin{equation}\label{jump_v}
(\dvt)_t + [ \nabla _\ttt u^+ \cdot \tt \circ \eta ] ( \dvt) +\nabla _\ttt H \circ \eta=0
\text{ on } \Gamma \times [0,T) \,.
\end{equation} 

\subsection{Evolution equation for derivative of  vorticity $\dt \delta u \cdot \tt$} On $\Gamma$, 
we denote the tangential derivative by $\dt$.   The chain-rule (\ref{tan_der})  shows that the tangential derivative of vorticity along particle 
trajectories can be written as
\begin{equation}\label{eq11}
[\dt \delta u \cdot \tt] \circ \eta = G \delta v'\cdot \tau  \,.
\end{equation} 
    Our analysis will rely on the evolution
equation for $G \delta v'\cdot \tau $.   By differentiating (\ref{jump_v}), we find that
\begin{equation}\label{eq1}
(\dvp)_t +  [G {v^+}' \cdot \tau]  (\dvp) + (\dvt)[G {v^+}'\cdot \tau ]' + (G \mathcal{H} ')' =0 \,.
\end{equation} 
Defining our ``forcing function'' $ \mathcal{A} $  to be 
\begin{align}
\mathcal{A} & = (\dvt)G[G {v^+}'\cdot \tau ]' + G(G \mathcal{H} ')' \nonumber \\
&=  (\dvt) \nabla _\ttt (\nabla _\ttt u^+ \cdot \tt) \circ \eta  + \nabla _\ttt (\nabla _\ttt H) \circ \eta \,,
\label{bigA}
\end{align} 
we see that equation (\ref{eq1}) is simply
\begin{equation}\label{ns3}
( \dvp)_t + G {v^+}' \cdot \tau ( \dvp) + G ^{-1} \mathcal{A} =0 \,.
\end{equation} 
Multiplying (\ref{ns3}) by $G$ and 
commuting $G$ with the time-derivative shows that
\begin{equation}\n
( G\dvp)_t + G({v^-}' \cdot \tau+ {v^+}' \cdot \tau) ( G\dvp) + \mathcal{A} =0 \,.
\end{equation} 
Writing ${v^-}'\cdot \tau = -\dvp + {v^+}' \cdot \tau $, we arrive at the desired evolution equation
\begin{equation}\label{lvorticity}
(G \dvp)_t - (G \dvp)^2 + 2 G {v^+}' \cdot \tau (G\dvp) + \mathcal{A} =0 \,.
\end{equation} 
Notice that the coefficient $2 G {v^+}' \cdot \tau = 2 \nabla _\ttt u^+\cdot \tt \circ \eta$, as well as the forcing function $ \mathcal{A} $, 
are both bounded as a consequence of our assumed bounds (\ref{bounds}) on $u^+$ and the parameterization of $z( \cdot , t)$ of $\Gamma(t)$.

\begin{remark}
In  \cite{FeIoLi2013},  Fefferman,  Ionescu, \& Lie use the notation $z( \alpha ,t )$ to denote a smooth parameterization of $\Gamma(t)$. 
In our analysis,  we will make use of
the Lagrangian parameterization $\eta(x,t)$ of $\Gamma(t)$ for points $x $ in the reference curve $\Gamma$.   
Our notation $\eta'$ corresponds to $\partial_\alpha z$ in
\cite{FeIoLi2013}.   Furthermore, our $\delta v \cdot \tau$ is the same as $ \frac{ \omega}{ | \partial_\alpha z|} $ in \cite{FeIoLi2013}.   The tangential
derivative of vorticity $[\dt \delta u \cdot \tt] \circ \eta $ corresponds to $ \partial_ \alpha \left(  \frac{ \omega}{ | \partial_\alpha z|}\right) /  | \partial_\alpha z|$
in \cite{FeIoLi2013}.
 \end{remark} 

\section{Bounds for $ \nabla u^-$ and the rate of blow-up}\label{sec_bounds}
\label{estimates}

\begin{lemma} \label{lemma1} Assuming (\ref{bounds}), 
\begin{equation}
\label{ns5}
\sup_{t \in [0,T]} \|v^-(\cdot,t)\|_{W^{1,\infty}(\Gamma)}\lesssim \mathcal{M}  \,.
\end{equation}
\end{lemma}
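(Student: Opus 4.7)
\smallskip
\noindent\textbf{Plan for the proof of Lemma \ref{lemma1}.}

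The natural starting point is the algebraic decomposition on $\Gamma$ coming from the kinematic jump condition $\delta v \cdot n = 0$: since $\delta v$ is purely tangential, one has $\delta v = (\dvt)\tau$, and hence
\[
v^-(\cdot,t) = v^+(\cdot,t) - \bigl(\dvt\bigr)\tau \quad \text{on } \Gamma.
\]
The $v^+$ contribution is immediately controlled: since $\eta(\cdot,t):\Gamma\to\Gamma(t)$ and $\|u^+(\cdot,t)\|_{W^{1,\infty}(\Gamma(t))}\le \mathcal{M}$, we get $\|v^+(\cdot,t)\|_{L^\infty(\Gamma)} \le \mathcal{M}$. The $L^\infty$ bound on $\dvt$ then follows from the linear transport ODE (\ref{jump_v}),
\[
(\dvt)_t + \bigl[\nabla_\ttt u^+\!\cdot\tt\circ\eta\bigr]\,(\dvt) = -\nabla_\ttt H\circ\eta,
\]
in which both the coefficient and the forcing are bounded by $\mathcal{M}$ thanks to (\ref{bounds}); Gronwall on $[0,T]$ yields $|\dvt|\lesssim \mathcal{M}$ uniformly, with constants depending on $T$ and the initial datum. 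Together this gives $\|v^-(\cdot,t)\|_{L^\infty(\Gamma)}\lesssim \mathcal{M}$.

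For the first tangential derivative I would differentiate the decomposition in $x_1$, obtaining
\[
(v^-)' \;=\; (v^+)' \;-\; (\dvt)'\,\tau \;-\; (\dvt)\,\tau'.
\]
Since $(v^+)' = \nabla u^+(\eta)\cdot \eta'$, the remark after (\ref{120814.1}) bounds the first factor by $\mathcal{M}$, so the first term is controlled by $\mathcal{M}|\eta'|$. The term $\tau'$ is curvature-type: $\tau'=|\eta'|\kappa\,n$ with $|\kappa|\lesssim\mathcal{M}$ from the $H$-bound in (\ref{bounds}), so this contribution is again $\lesssim \mathcal{M}|\eta'|$. A short calculation using $\delta v\cdot n=0$ shows $(\dvt)' = \dvp$, so everything reduces to uniform bounds on $|\eta'|$ and on $\dvp$ on the interval $[0,T]$.

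The main obstacle is precisely the coupled control of $|\eta'|$ and $\dvp$: individually, $G\dvp$ satisfies the Riccati-type equation (\ref{lvorticity}), which by itself one cannot close. My plan is instead to couple (\ref{ns3}) with the evolution of $|\eta'|$. Using $\eta'_t = \nabla u^-(\eta)\,\eta'$ and writing $\nabla_\ttt u^-\cdot\tt=\nabla_\ttt u^+\cdot\tt-\nabla_\ttt \delta u\cdot \tt$, together with the identity $(\nabla_\ttt\delta u\cdot\tt)\circ\eta = G\dvp$ from (\ref{eq11}), one obtains on $\Gamma$
\[
\frac{d}{dt}|\eta'| \;=\; |\eta'|\bigl(\nabla_\ttt u^+\!\cdot\tt\bigr)\!\circ\eta \;-\; \dvp,
\]
while (\ref{ns3}) gives
\[
(\dvp)_t \;=\; -\bigl(\nabla_\ttt u^+\!\cdot\tt\bigr)\!\circ\eta\,(\dvp) \;-\; |\eta'|\,\mathcal{A}.
\]
Both coefficients $\nabla_\ttt u^+\cdot\tt$ and $\mathcal{A}$ are $\lesssim\mathcal{M}$ under (\ref{bounds}). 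Setting $E(t)=|\eta'(\cdot,t)|^2+|\dvp(\cdot,t)|^2$ and using Cauchy--Schwarz on the cross-terms, one deduces $\tfrac{d}{dt}E \lesssim (1+\mathcal{M})E$, and Gronwall yields the desired $E(t)\lesssim E(0)\,e^{C(1+\mathcal{M})T}$ pointwise on $\Gamma$. Plugging this back into the expression for $(v^-)'$ gives $\|(v^-)'(\cdot,t)\|_{L^\infty(\Gamma)}\lesssim\mathcal{M}$, which combined with Step 1 completes the proof of (\ref{ns5}). The one subtle point to verify carefully is that all initial quantities $|\eta'(\cdot,0)|$ and $|\dvp(\cdot,0)|$ are absorbed into the implicit constant of $\lesssim$, which is legitimate since $\lesssim$ is allowed to depend on the initial data and on $T$.
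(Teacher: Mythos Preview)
Your proof is correct and follows essentially the same strategy as the paper: both obtain the $L^\infty$ bound on $\dvt$ from the linear ODE (\ref{jump_v}), and both close the derivative estimate by coupling the evolution of $\dvp$ given by (\ref{ns3}) with the evolution of $|\eta'|$ via $\nabla_\ttt u^-\cdot\tt = \nabla_\ttt u^+\cdot\tt - G\,\dvp$, using boundedness of $\mathcal{A}$.

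The only difference is in the packaging of the Gronwall step. The paper writes $\dvp$ via an integrating factor in terms of $\int_0^t |\eta'|\,\mathcal{A}$, then inserts the fundamental-theorem-of-calculus expression $|\eta'(\cdot,s)| \le |\eta'(\cdot,0)| + \int_0^s |{v^-}'|$ and reduces to a scalar Gronwall inequality for $\|\dvp\|_{L^\infty(\Gamma)}$ alone. You instead form the coupled quantity $E=|\eta'|^2+|\dvp|^2$ and run Gronwall directly on $E$, which is arguably cleaner and avoids the intermediate substitution. Both routes use exactly the same ingredients and yield the same bound.
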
 
\begin{proof}  
With $\tau_0=\tau(x,0)$,
solving  (\ref{jump_v}) using an integrating factor, we find that
\begin{align}
\delta v \cdot\tau=& \delta u_0 \cdot\tau_0\  \exp\left(-\int_0^t G{v^+}' \cdot \tau \right)
 -  \exp\left(-\int_0^t G{v^+}'\cdot\tau \right) \int_0^t  \nabla _\ttt H \circ \eta\  \exp\left(\int_0^s G{v^+}'\cdot\tau\right)\ ds\,. \label{zzt100}
\end{align}

We set $ \mathcal{I} (t) = \exp\left(\int_0^t \|G{v^+}'\cdot\tau\|_{L^\infty(\Gamma)}\right) $.  Since
$ G  {v^+}'\cdot \tau  =[\dt  u^+ \cdot \tt] \circ \eta$, by (\ref{bounds}), $ \mathcal{I} (t)$ is bounded.   It follows from (\ref{zzt100}) that
$$
\| \delta v \cdot\tau(\cdot , t)\|_{L^\infty(\Gamma)}
\le  \mathcal{I} (t)  \|\delta u_0\|_{L^\infty(\Gamma)}+   \mathcal{I} (t) \int_0^t \| \nabla _\ttt H\circ \eta \|_{L^\infty(\Gamma)}\,.
$$
Again from (\ref{bounds}), the tangential derivative of the mean curvature $ \nabla _\ttt H  \in W^{1, \infty }(\Gamma)$ so we see that
$\| \delta v \cdot\tau(\cdot , t)\|_{L^\infty(\Gamma)}$ is bounded.

Next, as $ \delta v \cdot n=0$, and $v^+ \cdot n$ is bounded according to (\ref{bounds}), we find that
$ \|v^-(\cdot,t)\|_{L^{\infty}(\Gamma)}\lesssim \mathcal{M} $ for all $t \in [0,T]$.
Then, from (\ref{ns3}),
\begin{align*}
\delta v' \cdot\tau=& \delta u_0' \cdot\tau_0\  \exp\left(-\int_0^t G{v^+}' \cdot \tau \right)
 -  \exp\left(-\int_0^t G{v^+}'\cdot\tau \right) \int_0^t  G ^{-1} \mathcal{A} \exp\left(\int_0^s G{v^+}'\cdot\tau\right)\ ds\,.
\end{align*}
so that with $G ^{-1} = | \eta '|$, 
\begin{align} 
\| \delta v' \cdot\tau(\cdot , t)\|_{L^\infty(\Gamma)}
& \le  \mathcal{I} (t)  \|\delta u_0' \cdot \tau_0\|_{L^\infty(\Gamma)}
+   \mathcal{I} (t) \int_0^t \|  | \eta '( \cdot , s)|  \mathcal{A}(\cdot ,s) \|_{L^\infty(\Gamma)} ds \,. \label{aom4}
\end{align} 
From the fundamental theorem of calculus, 
\begin{align*}
|\eta'( \cdot ,s) | & \le | \eta'( \cdot ,0)| + \int_0^s |{v^-}' (\cdot , r)| dr \\
& \le | \eta'( \cdot ,0)| + \int_0^s |{v^+}' (\cdot , r)| dr  + \int_0^s | \delta v'(\cdot , r)| dr \\
& \le \mathcal{M}   + \int_0^s | \delta v'(\cdot , r) \cdot n( \cdot ,r)| dr  + \int_0^s | \delta v'(\cdot , r) \cdot \tau( \cdot ,r)| dr \,,
 \end{align*} 
where we have used our assumed bounds (\ref{bounds}) for the last inequality.  Next, since $ \delta v \cdot n=0$ on $\Gamma$, we see that
 $ \delta v' \cdot n = - \delta v\cdot n'$; as $n' =  (H \circ \eta) \tau$, and as $\| H \circ \eta ( \cdot , t)\|_{L^\infty(\Gamma)}$ and 
 $\| \delta v \cdot\tau(\cdot , t)\|_{L^\infty(\Gamma)}$ are bounded,  we see from (\ref{aom4}) that
 $$
 \| \delta v' \cdot\tau(\cdot , t)\|_{L^\infty(\Gamma)} \lesssim \mathcal{M} + T \mathcal{M} \int_0^t  \| \delta v' \cdot\tau(\cdot , s)\|_{L^\infty(\Gamma)} ds\,.
 $$
 By taking the convention that $\lesssim$ incorporates $T$ (which we view in this paper as a given constant, namely the eventual finite-time of self-intersection), this shows that
 $$
 \| \delta v' \cdot\tau(\cdot , t)\|_{L^\infty(\Gamma)} \lesssim \mathcal{M} + \mathcal{M} \int_0^t  \| \delta v' \cdot\tau(\cdot , s)\|_{L^\infty(\Gamma)} ds\,.
 $$ 
Hence, by Gronwall's inequality, $\sup_{t\in [0,T]}  \| \delta v' \cdot\tau(\cdot , t)\|_{L^\infty(\Gamma)} $ is bounded.   We have already shown
that $\sup_{t\in [0,T]}  \| \delta v' \cdot n(\cdot , t)\|_{L^\infty(\Gamma)} $ is bounded; thus, $\sup_{t\in [0,T]}  \| \delta v' \|_{L^\infty(\Gamma)} $ is
bounded,
from which we may conclude that 
$ \|{v^-}' (\cdot , t)  \|_{L^{\infty}(\Gamma)}\lesssim \mathcal{M} $ for all $t \in [0,T]$.   
\end{proof} 

\begin{remark} Note that $u^-$ is Lipschitz continuous, uniformly on any time interval $[0,t]$ with $t < T$.   This, in turn, allows us to define the
Lagrangian flow map $\eta$ in a classical sense for any time interval $[0,t]$ for $t < T$.  We then extend this definition of $\eta$ to the time interval $[0,T]$ by $\eta(x,T) = x+ \int_0^T v^- (x,s)ds$ by the bounds in Lemma \ref{lemma1}.
\end{remark} 

\begin{lemma} \label{lemma2} Assuming (\ref{bounds}), 
\begin{equation}
\label{ns10}
\sup_{t \in [0,T]} \| \nabla u^-(\cdot,t)\|_{L^\infty(\eta(\Omega^-,t))}\lesssim \frac{\mathcal{M} }{\min_{\Gamma}|\eta'(\cdot,t)|} \,.
\end{equation}
\end{lemma}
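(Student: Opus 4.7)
The plan is to exploit the fact that $u^-$ is both divergence-free and curl-free in $\Omega^-(t)$, so each of its components, and hence every entry of $\nabla u^-$, is harmonic there; in particular $|\nabla u^-|^2$ is subharmonic in $\Omega^-(t)$. By the maximum principle, it therefore suffices to bound $\nabla u^-$ pointwise on $\partial\Omega^-(t) = \Gamma(t) \cup \partial\mathcal{D}$, and the two portions of the boundary are handled by very different arguments.

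On the moving interface $\Gamma(t)$, differentiating the chain rule $v^-(x,t) = u^-(\eta(x,t),t)$ in the parameter of $\Gamma$ yields ${v^-}'(x,t) = \nabla u^-(\eta(x,t),t)\,\eta'(x,t)$, so on $\Gamma(t)$ one has $\nabla_\ttt u^- = {v^-}'/|\eta'|$. Lemma \ref{lemma1} supplies $\|{v^-}'(\cdot,t)\|_{L^\infty(\Gamma)} \lesssim \mathcal{M}$, and hence
\begin{equation*}
\|\nabla_\ttt u^-(\cdot,t)\|_{L^\infty(\Gamma(t))} \lesssim \frac{\mathcal{M}}{\min_{\Gamma} |\eta'(\cdot,t)|}\,.
\end{equation*}
Just as in the remark following Definition \ref{def_W2infi}, the conditions $\operatorname{div} u^- = 0$ and $\operatorname{curl} u^- = 0$ give the pointwise algebraic identities $|\nabla_\nnn u^- \cdot \nn| = |\nabla_\ttt u^- \cdot \ttt|$ and $|\nabla_\nnn u^- \cdot \ttt| = |\nabla_\ttt u^- \cdot \nn|$, so that the full gradient $\nabla u^-$ along $\Gamma(t)$ is controlled pointwise by its tangential part, yielding $\|\nabla u^-(\cdot,t)\|_{L^\infty(\Gamma(t))} \lesssim \mathcal{M}/\min_\Gamma|\eta'(\cdot,t)|$.

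For the fixed outer boundary $\partial\mathcal{D}$, hypothesis (2) of Theorem \ref{thm1} gives $\operatorname{dist}(\Gamma(t), \partial\mathcal{D}) > 1/\mathcal{M}$, so $\partial\mathcal{D}$ remains uniformly separated from $\Gamma(t)$ in $t$. I would first use the maximum principle for the subharmonic $|u^-|^2$, together with the Lemma \ref{lemma1} bound on $\Gamma(t)$ and standard elliptic regularity for the mixed boundary problem (with $u^- \cdot N = 0$ on the smooth $\partial\mathcal{D}$), to obtain $\|u^-(\cdot,t)\|_{L^\infty(\Omega^-(t))} \lesssim \mathcal{M}$. Then, because $\partial\mathcal{D}$ is a fixed smooth surface whose distance to $\Gamma(t)$ is at least $1/\mathcal{M}$, classical interior and boundary gradient estimates for the harmonic $u^-$ on a tubular neighborhood of $\partial\mathcal{D}$ of thickness $\sim 1/\mathcal{M}$ yield $\|\nabla u^-\|_{L^\infty(\partial\mathcal{D})} \lesssim \mathcal{M}$, with no dependence on $|\eta'|$. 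Applying the maximum principle to the subharmonic $|\nabla u^-|^2$ and combining the two boundary bounds then completes the proof.

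The main obstacle is the $\partial\mathcal{D}$ step: one must take care that this estimate does not inherit the $1/|\eta'|$ factor present on $\Gamma(t)$. Condition (2) of Theorem \ref{thm1}---the strict positive-distance hypothesis $\operatorname{dist}(\Gamma(t),\partial\mathcal{D}) > 1/\mathcal{M}$---is precisely the geometric requirement that insulates the fixed outer boundary from the degeneracy of the Lagrangian parametrization on $\Gamma$.
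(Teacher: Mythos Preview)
Your argument is correct and follows essentially the same route as the paper: control $\nabla_\ttt u^-$ on $\Gamma(t)$ via Lemma~\ref{lemma1} and the chain rule, recover the full gradient there from $\operatorname{div} u^-=\operatorname{curl} u^-=0$, and propagate to the interior by the maximum principle for harmonic functions. Two small differences are worth noting: the paper splits $\nabla_\ttt u^-$ into its $\tt$- and $\nn$-components and, using $\delta u\cdot\nn=0$, obtains the sharper bound $|\nabla_\ttt u^-\cdot\nn|\lesssim\mathcal{M}$ (with no $|\eta'|^{-1}$ factor), recorded as \eqref{ns18.e} and needed in the subsequent blow-up analysis of Theorem~\ref{thm_blowup}; conversely, you explicitly treat the fixed outer boundary $\partial\mathcal{D}$, which the paper's proof simply omits when invoking the maximum principle.
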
 
\begin{proof} 
From (\ref{eq11}) and Lemma \ref{lemma1}, 
$
\left\|[\dt \delta u \cdot \tt] \circ \eta\right\|_{L^\infty(\Gamma)} \lesssim { \mathcal{M} }/{\min_{\Gamma}|\eta'( \cdot ,t)|}$.
Then, we see that
$\max_{y\in \eta(\Gamma,t)}  \left|\dt \delta u \cdot \tt \right| \lesssim{ \mathcal{M} }/{\min_{\Gamma}|\eta'( \cdot ,t)|}$.
Hence, with our assumed bounds (\ref{bounds}),
\begin{equation}\label{good1}
\max_{y\in \eta(\Gamma,t)}  \left|\dt  u^- \cdot \tt \right|\ \lesssim\frac{ \mathcal{M} }{\min_{\Gamma}|\eta'( \cdot ,t)|}\,.
\end{equation}

Next, as $ \delta u\cdot \nn=0$ (where recall that $\delta u = u^+-u^-$ on $\Gamma(t))$, we have the identity
$0= \nabla_{\ttt} (\delta u\cdot \nn)=(\nabla_{\ttt} \delta u)\cdot \nn  + \delta u \cdot  \nabla_{\ttt}\nn$; hence, we see that
\begin{equation*}
\nabla_\ttt u^-\cdot \nn=\nabla_\ttt u^+\cdot \nn+\delta u \cdot  \nabla_{\ttt}\nn\,.
\end{equation*}
Lemma \ref{lemma1} provides us with  $L^\infty(\Gamma)$ control of $u^-$; hence, with (\ref{bounds}),  it follows that
\begin{equation}
\label{ns18.e}
\max_{y\in \eta(\Gamma,t)}  \left|[\nabla_{\ttt}  u^-\cdot\nn ](y)\right| \lesssim \mathcal{M} \,.
\end{equation}
The inequalities (\ref{good1}) and (\ref{ns18.e}) together with the fact that $ \operatorname{div} u^- = \operatorname{curl} u^-=0$ in $\eta(\Omega^-,t)$ implies that
for any $t<T$,
\begin{equation}
\label{ns8}
\| \nabla u^-(\cdot,t)\|_{L^\infty(\eta(\Gamma,t))} \lesssim\frac{ \mathcal{M} }{\min_{\Gamma}|\eta'( \cdot ,t)|}\,.
\end{equation}
As $\Delta \nabla  u^-=0$ in $\eta(\Omega^-,t)$, the maximum and minimum principle applied to each component of $ \nabla u^-$, together with  (\ref{ns8}), 
provide the inequality (\ref{ns10}).
\end{proof} 

\begin{remark} 
As a consequence of Lemma \ref{lemma2}, we see that  $ \sup_{y \in \Gamma(t)} \| \nabla u^-(y,t)\|_{L^\infty(\eta(\Omega^-,t))} \to \infty $ as
$t \to T$  if and only if $\lim _{ t \to T} |\eta'(x,t)| \to 0$ for some $ x \in \Gamma$.   If we assume that there 
are distinct  points $x_0,x_1 \in \Gamma$ 
which come into contact, such that $\eta(x_0,T) = \eta(x_1,T)$  and that such an intersection point is unique at time $t=T$, then 
$| \nabla \um ( \cdot ,t)|$ can only blow-up at the contact point $\eta(x_0,T) $.

The explanation is as follows:  since $\um$ is harmonic, by using a smooth cut-off function $\varphi$ whose support does not intersect $\eta(x_0,T)$, and proceeding as in the proof of (\ref{052715.1}) (just after (\ref{eps_def2})), 
elliptic estimates 
show that $| \nabla \um ( x,t)|$ must be bounded for $x \in 
\operatorname{spt}(\varphi)$, namely away from $x_0$.

Next, suppose that  $| \nabla \um ( \eta(x_0,t),t)| $ remains bounded as $t \to T$; 
then, by employing a similar argument as we used to establish (\ref{120814.2}) (considering now the flow $\eta$ of $u^-$), we obtain that
$|\eta'(x_0,t)| \ge \lambda >0$ as $t \to T$ for some constant $\lambda$.  
By continuity of $\eta'$, this means that $|\eta'(x,t)|  >0$ in a small neighborhood of $\eta(x_0,t)$
which means that, by Lemma \ref{lemma2}, $| \nabla \um ( x,t)|$ cannot blow-up as $t\to T$ for $ x$ close to $x_0$.

\end{remark}

\begin{theorem} \label{thm_blowup} With the assumed bounds (\ref{bounds}), if
 there is a sequence $t_n \to T$ such that
\begin{equation}\label{assump1} \max_{x \in \Gamma}\left| [\nabla_{\ttt} \delta u\cdot\tt](\eta(x,t_n),t_n) \right| \to \infty \,,\end{equation} 
 then for $ 0 <\epsilon \ll 1$, there exists $t_0(\textcolor{black}{\epsilon})$ such that \textcolor{black}{ $T-t_0( \epsilon ) < \epsilon $} and
\begin{equation}
\label{ns18.f}
\max_{y\in\eta(\overline{\Omega^-},t)} |\nabla  u^-(y,t)| \le \frac{1+ \epsilon }{T-t}\ \ \ \ \forall t\in [t_0(\textcolor{black}{\epsilon}),T)\,.
\end{equation}
Furthermore, if there exists a unique point of $\Gamma(T)$ such that there are two distinct points $x_0,x_1 \in \Gamma$ with
 $\eta(x_0,T)=\eta(x_1,T)$ with tangent vector to $\Gamma(T)$ at $\eta(x_0,T)$ given by
$\eo$, then
\begin{equation}
\label{u12epsilon}
\max_{y\in\eta(\overline{\Omega^-},t)} \left| \frac{\p u_2^-}{\p x_1} (y,t) \right| \le \frac{\epsilon}{T-t}\ \ \ \ \forall t\in [t_0(\textcolor{black}{\epsilon}),T)\,.
\end{equation}
\end{theorem}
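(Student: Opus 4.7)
The plan is to read equation~(\ref{lvorticity}) as a scalar Riccati ODE in $t$ for $W(x,t):=G\,\delta v'\cdot\tau=[\dt\delta u\cdot\tt]\circ\eta$ at each fixed $x\in\Gamma$:
$$W_t = W^2 - 2B(x,t)\,W - \mathcal A(x,t),$$
with $B=G{v^+}'\cdot\tau$ and $|B|,|\mathcal A|\lesssim\mathcal M$ by~(\ref{bounds}). Two elementary ODE facts drive the blow-up rate. First, for $W\le -K_0(\mathcal M)$ the quadratic term dominates and $W_t>0$, so $W$ is uniformly bounded below; assumption~(\ref{assump1}) therefore forces $W(x_n,t_n)\to+\infty$ for some sequence. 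Second, for fixed small $\epsilon>0$ choose $K(\epsilon,\mathcal M)$ so that $W\ge K$ implies $(1-\tfrac\epsilon4)W^2\le W_t\le(1+\tfrac\epsilon4)W^2$, and compare with $Z'=(1-\tfrac\epsilon4)Z^2$ started at $Z(s)=W(x,s)=:w\ge K$. Since $Z$ is increasing and $Z(s)\ge K$, the region $\{W\ge K\}$ is self-maintaining, so by comparison $W(x,\cdot)\ge Z(\cdot)$ as long as both are finite; hence if $w>\frac1{(1-\epsilon/4)(T-s)}$, then $Z$ (and therefore $W$) would blow up strictly before $T$, contradicting existence of the solution on $[0,T)$. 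Thus $w\le(1+\epsilon)/(T-s)$ whenever $w\ge K$, and $w\le K\le(1+\epsilon)/(T-s)$ is trivial once $T-s\le(1+\epsilon)/K$; choosing $t_0(\epsilon)$ so that $T-t_0(\epsilon)<\epsilon$ and both conditions hold yields the uniform bound $\sup_{x\in\Gamma}W(x,t)\le(1+\epsilon)/(T-t)$ on $[t_0(\epsilon),T)$.

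To upgrade this to~(\ref{ns18.f}), decompose $\nabla u^-$ on $\Gamma(t)$ in the orthonormal frame $(\tt,\nn)$. The conditions $\operatorname{div}u^-=\operatorname{curl}u^-=0$ force the matrix of $\nabla u^-$ in this frame to be symmetric and trace-free:
$$\begin{pmatrix}a & b\\ b & -a\end{pmatrix},\qquad a=\nabla_\ttt u^-\cdot\tt,\quad b=\nabla_\ttt u^-\cdot\nn.$$
By~(\ref{ns18.e}) $|b|\lesssim\mathcal M$, while $|a|\le|\nabla_\ttt u^+\cdot\tt|+|\nabla_\ttt\delta u\cdot\tt|\le\mathcal M+|W|$ by~(\ref{bounds}). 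Combined with the first paragraph and the absorption of $\mathcal M$ into $1/(T-t)$ for $t$ close to $T$, we get $|\nabla u^-|\le(1+\epsilon)/(T-t)$ on $\Gamma(t)$ (after a minor adjustment of $\epsilon$). The maximum principle applied to each (harmonic) component of $\nabla u^-$ on $\eta(\Omega^-,t)$, together with the bounded contribution on $\partial\D$ (separated from $\Gamma(t)$ by $1/\mathcal M$), propagates the estimate to all of $\eta(\overline{\Omega^-},t)$.

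For the sharper bound~(\ref{u12epsilon}), write $\tt=(\cos\theta,\sin\theta)$ and change basis from $(\tt,\nn)$ to $(\eo,\et)$; a direct computation gives
$$\frac{\p u_2^-}{\p x_1}=a\sin 2\theta+b\cos 2\theta.$$
The hypothesis that the tangent to $\Gamma(T)$ at $\eta(x_0,T)$ is $\eo$ means $\theta(x,t)\to 0$ modulo $\pi$ as $(x,t)\to(x_0,T)$ or $(x_1,T)$; the $W^{4,\infty}$ regularity of $\Gamma(t)$ supplies continuity of $\tt$ in $(x,t)$, so a neighborhood $\mathcal N$ of $\{x_0,x_1\}$ in $\Gamma$ and a threshold $t_0(\epsilon)$ can be chosen with $|\sin 2\theta|\le\epsilon/2$ for $x\in\mathcal N$ and $t\in[t_0(\epsilon),T)$. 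On $\mathcal N$,
$$\left|\frac{\p u_2^-}{\p x_1}\right|\le\tfrac\epsilon2\cdot\frac{1+\epsilon}{T-t}+C\mathcal M\le\frac{\epsilon}{T-t}$$
for $t$ close enough to $T$. On $\Gamma\setminus\mathcal N$, the remark following Lemma~\ref{lemma2} guarantees that $|W|$ (and so $|\nabla u^-|$) is uniformly bounded since the blow-up of $|\nabla u^-|$ is localized at the contact point, so the estimate becomes trivial once $T-t$ is small. A final application of the maximum principle to the harmonic function $\p u_2^-/\p x_1$ extends the bound to all of $\eta(\overline{\Omega^-},t)$.

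The delicate point is the Riccati comparison in the first paragraph: one must ensure that $W(x,\cdot)\ge Z(\cdot)$ is globally valid on $[s,T)$, which rests on the observation that $Z$ is monotone increasing, so the region $\{W\ge K\}$ is self-maintaining. The rest of the argument is comparatively soft: the frame decomposition is pure linear algebra via the div/curl conditions, and the key geometric cancellation $\sin 2\theta\to 0$ near the contact point is immediate from the tangent-vector hypothesis.
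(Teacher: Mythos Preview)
Your proof is correct and follows the same overall architecture as the paper's: analyze the Riccati ODE~(\ref{lvorticity}) for $W=G\,\delta v'\cdot\tau$, pass to $\nabla u^-$ via the divergence/curl-free frame decomposition plus the maximum principle, and then isolate the $\partial u_2^-/\partial x_1$ component near the contact point. The geometric cancellation you write as $a\sin 2\theta+b\cos 2\theta$ is exactly the content of the paper's expansion~(\ref{ns18.g}), just packaged more compactly.

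The genuine methodological difference is in the Riccati step. The paper integrates the equation explicitly (after multiplying by an integrating factor and then by $1/\mathscr X^2$) to obtain the closed formula~(\ref{ns18bis}), from which the rate $\sim 1/(T-t)$ is read off. You instead run a direct ODE comparison against $Z'=(1-\tfrac\epsilon4)Z^2$ and argue that $W(x,s)>1/[(1-\tfrac\epsilon4)(T-s)]$ would force blow-up before $T$. Your route is shorter and more elementary for the upper bound alone; the paper's explicit formula, however, carries extra information that is reused immediately afterward (the sharp two-sided rate $\lim_{t\to T}(T-t)\mathscr X(x_0,t)=1$ of~(\ref{ns19}) and the refined bound~(\ref{improved}) in Corollary~\ref{cor2}), so within the paper's larger argument their longer computation is not wasted.

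For the ``away from $\mathcal N$'' step you invoke the remark after Lemma~\ref{lemma2}. That is legitimate, but note that the remark's justification is precisely the cut-off/stream-function elliptic regularity argument that the paper spells out in detail as Step~4 of this very proof (leading to~(\ref{052715.1})). So you are not skipping work so much as relocating it; the bound you obtain there is of the form $C(\mathcal M,\epsilon)$ uniformly in $t$, which is what allows absorption into $\epsilon/(T-t)$ once $T-t$ is small.
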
 
\begin{remark} 
We note that $0< \epsilon \ll 1$ is a {\it fixed} positive constant which only depends on the initial data and the bound $ \mathcal{M}$  in (\ref{bounds}).  Note also that $t_0( \epsilon) $ depends on $ \epsilon $, and will be chosen closer and closer to $T$ in the course of the
proof, and is eventually fixed as a function of $ \epsilon$.
\end{remark} 
\begin{proof}  
{\it Step 1. Blow-up rate for the derivative of vorticity $[\nabla_{\ttt} \delta u\cdot\tt](\eta(x_0,t),t)$  as $t \to T$.}  
We first  suppose that for some $x_0 \in \Gamma$, $ \left| [\nabla_{\ttt} \delta u\cdot\tt](\eta(x_0,t_n),t_n) \right| \to \infty$, and 
establish that  $[\nabla_{\ttt} \delta u\cdot\tt](\eta(x_0,t),t)$ (which, recall, equals $G\dvp(x_0,t)$) has a precise blow-up rate under the assumption (\ref{assump1}).

We set 
$$\mathscr{X} (x_0,t) = G\dvp(x_0,t)\,,$$
and define the coefficient function
$$ \mathfrak{A}(x_0,t) = 2 G {v^+}' \cdot \tau (x_0,t) \,.$$
Then,
 (\ref{lvorticity}) reads 
\begin{equation}\label{ns17}
 \mathscr{X}_t (x_0,t)  - \mathscr{X}^2 (x_0,t) + \mathfrak{A}(x_0,t) \, \mathscr{X} (x_0,t) = - \mathcal{A}( x_0, t ) \,,
 \end{equation} 
 where $ \mathcal{A} (x,t)$ is defined in (\ref{bigA}).
 This equation can be written as 
$$\left[ \exp\int_0^t \mathfrak{A} (x_0,s)ds\ \mathscr{X} (x_0,t) \right]_t - \exp\int_0^t \mathfrak{A} (x_0,s)ds\ \mathscr{X}^2 (x_0,t)=- \exp\int_0^t \mathfrak{A} (x_0,s)ds\ \mathcal{A}(x_0,t) 
$$
so that
\begin{align}
\int_0^t \exp\left(\int_0^s \mathfrak{A} (x_0,r)dr\right) \mathscr{X}^2 (x_0,s) ds&=
\exp\left(\int_0^t \mathfrak{A} (x_0,s)ds\right) \mathscr{X} (x_0,t) - \mathscr{X} (x_0,0) \nonumber\\
&\ \ +\int_0^t \exp\left(\int_0^s \mathfrak{A} (x_0,r)dr\right) \mathcal{A}(x_0,s) ds \label{eq12}
\,.
\end{align} 
Thanks to (\ref{bounds}), $ \mathfrak{A} (x_0,t)$ has a minimum and maximum on $[0,T]$.  Hence, there are positive constants $c_1,c_2,c_3$ such
that for any $t \in [0,T)$,
$$
c_1\int_0^t \mathscr{X}^2(x_0,s)ds - c_3 \le \mathscr{X} (x_0,t)\le c_2\int_0^t \mathscr{X}^2 (x_0,s) ds + c_3 \,,
$$ 
and by (\ref{assump1}),  the limit as $t\rightarrow T$ is well-defined and 
\begin{equation}
\label{ns18}
\lim_{t\rightarrow T} \mathscr{X} (x_0,t)=\infty\,,
\end{equation}

For $t>\bar t_0$ sufficiently close  to $T$, we can then divide (\ref{ns17}) by $\mathscr{X}^2$, and integrate from $\bar t_0$ to $t$, to find that
\begin{equation*}
-\frac{1}{\mathscr{X} (x_0,t)} +\frac{1}{\mathscr{X} (x_0,\bar t_0)}-t+\bar t_0+ \int_{t_0}^t \left(\frac{ \mathfrak{A} (x_0,s)}{ \mathscr{X} (x_0,s)} + \frac{ \mathcal{A} (x_0,s)}{\mathscr{X}^2(x_0,s)}\right) ds =0\,.
\end{equation*}
Using the limit in (\ref{ns18}), 
\begin{equation}\label{aom}
\frac{1}{\mathscr{X} (x_0,\bar t_0)}-T+\bar t_0+ \int_{t_0}^T \left(\frac{ \mathfrak{A} (x_0,s)}{ \mathscr{X} (x_0,s)} + \frac{ \mathcal{A} (x_0,s)}{\mathscr{X}^2 (x_0,s)}\right) ds =0\,,
\end{equation} 
from which we obtain  the following identity: for $t \in [t_0,T)$,
\begin{equation}
\label{ns18bis}
\mathscr{X} (x_0,t) = \left[   T-t - \int_t^T \left(\frac{ \mathfrak{A} (x_0,s)}{ \mathscr{X} (x_0,s)} + \frac{ \mathcal{A} (x_0,s)}{\mathscr{X}^2 (x_0,s)}\right) ds    \right]^{-1} \,,
\end{equation}
since we can replace $t_0$ with $t$ in (\ref{aom}).

From (\ref{ns18}), this formula implies that the integrand is small as $t$ is close to $T$, and then provides the rate of blow-up:
\begin{equation}
\n
\lim_{t\rightarrow T}  \mathscr{X} (x_0,t) ({T-t})=1\,.
\end{equation}
Using (\ref{bounds}), we see that
\begin{equation}
\lim_{t\rightarrow T} 
[\nabla_{\ttt} \um \cdot\tt](\eta(x_0,t),t)\ ({T-t})=-1 \,.
\label{ns19}
\end{equation} 

\noindent
{\it Step 2. Maximum of vorticity derivative blows-up on $\Gamma(t)$.}  Having established the blow-up rate for $[\nabla_{\ttt} \delta u\cdot\tt](\eta(x_0,t),t)$, 
we shall next prove that for any $t\in [0,T)$, the quantity $\max_{ x \in \Gamma}[\nabla_{\ttt} \delta u\cdot\tt](\eta(x,t),t )$ (which equals $ \max_{ x \in \Gamma}G\dvp(x,t)$)  has the same blow-up rate.
For each $x \in \Gamma$ and $t\in [0,T)$, we set
\begin{equation}\label{Acoeff}
 \mathfrak{A}(x,t) = 2 G {v^+}' \cdot \tau (x,t)  \text{ and } \mathscr{X} (x,t) = G\dvp(x,t)\,.
\end{equation} 
 Following (\ref{eq12}), we see that
 \begin{equation}\label{eq13}
 \mathscr{X}(x,t) \ge \exp\left(-\int_0^t \mathfrak{A} (x,s)ds\right) \mathscr{X}(x,0) -\exp\left(-\int_0^t \mathfrak{A} (x,s)ds\right)\int_0^t \exp\left(\int_0^s \mathfrak{A} (x,r)dr\right) \mathcal{A}(x,s) ds
\,;
\end{equation} 
 hence, there exists a positive constant $c_4$ such that $ \mathscr{X}(x,t) > -c_4$.  Since $\mathscr{X} _t = \mathscr{X}^2  - \mathfrak{A} \, \mathscr{X} - \mathcal{A}$, there is a positive constant $c_5$,
  $$\mathscr{X}_t> \mathscr{X}^2/2 -c_5\,.$$ 
It follows that if 
 $\mathscr{X}(x,t_0)\ge \sqrt{2c_5}$, then $\X(x, \cdot )$ is increasing on $[t_0,T)$.    
 For $x\in\Gamma$ we choose $t_0( \epsilon) <T$ sufficiently close to $T$ so that  for $0 < \epsilon  \ll 1$ fixed,
 \begin{equation}\label{ns18.c}
 \X(x,t_0( \epsilon )) > \sqrt{2c_5} +1 + \frac{8c_6}{ \epsilon }\,, \ \ \ \ c_6 = \sup_{(t,x) \in [0,T]\times\Gamma}\left( | \mathfrak{A} (x,t)| + \ \mathcal{A} (x,t)|\right) \,,
 \end{equation} 
 with $c_6$ denoting a bounded constant thanks to (\ref{bounds}).
Since $\X(x,\cdot )$ is increasing for such an $x$,  for $t\in [t_0(\epsilon ),T)$, the limit of $\X(x,t)$ as $t\rightarrow T$ is well-defined in the interval
 $(1+\sqrt{2c_5}+8 c_6/ \epsilon ,\infty]$, and
 thus so is the limit of $\frac{1}{\X(x,t)}$.   Analogous to  (\ref{ns18bis}), we obtain that
\begin{equation*}
{\X(x,t)} = \left[{\frac{1}{\lim_{t\rightarrow T} \X(x,t)}+T-t+ \int_{T}^t\left(\frac{ \mathfrak{A} (x,s)}{ \X(x,s)} + \frac{ \mathcal{A} (x,s)}{\X ^2(x,s)}\right)  ds}\right]^{-1} \,.
\end{equation*}
From (\ref{ns18.c}), we then have that for all $t\in [t_0( \epsilon ),T)$,
\begin{equation*}
{\X(x,t)} \le \left[{\frac{1}{\lim_{t\rightarrow T} \X(x,t)}+(T-t)(1- \epsilon )}\right]^{-1} \,
\end{equation*}
and since $\lim_{t\rightarrow T} \X(x,t)\ge 0$, then for all $t< T$,
\begin{equation}\label{eq14}
{\X(x,t)} \le \frac{1}{(T-t)(1- \epsilon )}\,.
\end{equation}

\vspace{.1 in}
\noindent
{\it Step 3. Blow-up rate for $\nabla \um$ in $\overline{\Omega^-(t)}$ as $t\to T$.} From (\ref{eq14}), for  any $t\in[t_0( \epsilon ),T)$, 
\begin{equation}
\label{ns18.d}
\max_{y\in \eta(\Gamma,t)} \left|[\nabla_{\ttt} \delta u\cdot\tt ](y,t)\right| \le \frac{1+ 2 \epsilon }{(T-t)}\,.
\end{equation}

The inequalities (\ref{ns18.e}) and (\ref{ns18.d}), together with the fact that $ \operatorname{div} u^- = \operatorname{curl} u^-=0$ in $\eta(\Omega^-,t)$, show that
\begin{equation}\label{eq15}
\max_{y\in\eta(\Gamma,t)} | \nabla  u^-(y,t)| \le \frac{1+ 2 \epsilon }{T-t}\,,
\end{equation}
where $\max_{y\in\eta(\Gamma,t)} | \nabla  u^-(y,t)|$ denotes the maximum over all of the components of the matrix $ \nabla \um$.
Now, for any fixed $t\in [0,T)$, since each component of  $\nabla u^-$ is harmonic in the domain $\eta(\Omega^-,t)$, the maximum and minimum principles
together with  the boundary estimate (\ref{eq15}) shows that (\ref{ns18.f}) holds.

\vspace{.1 in} 
\noindent
{\it Step 4. Asymptotic estimates for the components of $ \nabla \um$ as $t \to T$ in an $ \epsilon $-neighborhood of the splash.}
Since
\begin{equation*}
\frac{\p u^-}{\p x_1}:= \nabla _\eo u^-  =(\tt\cdot \eo )\nabla_\ttt u^- + (\nn\cdot \eo ) \nabla_\nnn u^-\,,
\end{equation*}
we  have that
\begin{align} 
\frac{\p u_2^-}{\p x_1}
&=(\tt\cdot \eo) \nabla_\ttt u^-\cdot (\tt\cdot \et\ \tt+\nn\cdot \et\ \nn)+
(\nn\cdot \eo) \nabla_\nnn u^-\cdot (\tt\cdot \et\ \tt+\nn\cdot \et\ \nn) \n\\
&=(\tt\cdot \eo)(\tt\cdot\et) \nabla_\ttt u^-\cdot  \tt         +(\tt\cdot \eo)(\nn \cdot \et) \nabla_\ttt u^-\cdot\nn+(\tt\cdot \et)(\nn \cdot \eo) \nabla_\nnn u^-\cdot \tt\n\\
&\  +
(\nn\cdot \eo)(\nn\cdot\et) \nabla_\nnn u^-\cdot \nn\,.
\label{ns18.g}
\end{align} 
 
By rotating our coordinate system, if necessary, we suppose that 
 the tangent and normal directions to $\Gamma(T)$ at  $\eta(x_0,T)$ are given by the standard basis vectors $\eo=(1,0)$ and $\et=(0,1)$, respectively \textcolor{black}{(which we refer to as the horizontal and vertical directions, respectively)}.  

   Next,
choose a point $\eta(x,t) \in \Gamma(t)$ in a small neighborhood of $\eta(x_0,t)$, and  let the curve $\ss(t)$ denote that portion of $\Gamma(t)$ that
connects $\eta(x_0,t)$ to $\eta(x,t)$.   Let  $\vl(t): [0,1] \to \ss(t)$ denote a unit-speed parameterization such that $\vl(t)(1) = \eta(x,t)$ and $\vl(t)(0) = \eta(x_0,t)$.
Then,
\begin{align}
  \nn(\eta(x,t),t) \cdot \eo -  \nn(\eta(x_0,t),t) \cdot \eo &= \int_{\sss(t)} \nabla ( \nn \cdot \eo ) \cdot d\vl \nonumber\\
  \tt(\eta(x,t),t) \cdot \et -  \tt(\eta(x_0,t),t) \cdot \et & = \int_{\sss(t)} \nabla ( \tt \cdot \et ) \cdot d\vl \,.\label{052015.1}
\end{align}

From our assumed bounds (\ref{bounds}), there is a constant $c_7>0$ such that for $t \le T$
\begin{align} 
& |\nn(\eta(x,t),t) \cdot \eo -  \nn(\eta(x_0,t),t) \cdot \eo | +  |\tt(\eta(x,t),t) \cdot \et - \tt(\eta(x_0,t),t) \cdot \et   | 
\nonumber\\
& \qquad\qquad\qquad\qquad\qquad  \le c_7 |\eta(x,t) - \eta(x_0,t)| \,. \label{cs-c7}
\end{align}

Next, with $G= |\eta'| ^{-1} $, 
we compute that
\begin{align*} 
 \tau _t &  = (G {v^-}'\cdot n) \, n 
  = - (G \delta v' \cdot n) \, n + (G {v^+}'\cdot n) \, n \\
  & =  (G \, n' \cdot  \delta v) \, n + (G {v^+}'\cdot n) \, n 
   = \left[(\nabla _\ttt \nn\cdot \delta u) \, \nn  + (\nabla _\ttt u^+\cdot \nn) \, \nn\right] \circ \eta \,,
\end{align*} 
where we have used (\ref{tan_der}) in the last equality.   There is a similar formula for $n_t = - (G {v^-}'\cdot n) \, \tau$. 
It follows from Lemma \ref{lemma1} and our assumed bounds (\ref{bounds}) that  
\begin{equation}\label{aom5}
\sup_{t\in[0,T]} \left( \|\tau_t(\cdot ,t)\|_{L^ \infty (\Gamma)} +  \|n_t(\cdot ,t)\|_{L^ \infty (\Gamma)} \right) \lesssim \mathcal{M} \,.
\end{equation} 
Then, using the fundamental theorem of calculus, we see that
\begin{alignat*}{2}
 \nn(\eta(x_0,t),t) \cdot \eo &=  \nn(\eta(x_0,t),t) \cdot \eo -  \nn(\eta(x_0,T),T) \cdot \eo &&= \int_T^t \partial_tn(x_0,s)\cdot \eo ds \\
 \tt(\eta(x_0,t),t) \cdot \et &=  \tt(\eta(x_0,t),t) \cdot \et -  \tt(\eta(x_0,T),T) \cdot \et &&= \int_T^t \partial_t  \tau  (x_0,s)\cdot \et ds
\end{alignat*} 
so that (by readjusting the constant $c_7$ if necessary), we have that
\begin{equation} 
\label{052015.1bis}
 |\nn(\eta(x_0,t),t) \cdot \eo| +  |\tt(\eta(x_0,t),t) \cdot \et|  \le c_7 (T-t) \,.
\end{equation} 
Next,  we
\begin{equation}
\label{eps_def} 
\begin{array}{l}
\text{choose $t_0(\textcolor{black}{\epsilon}) \in [0,T)$  and a sufficiently small neighborhood $\gamma_0(\textcolor{black}{\epsilon}) \subset \Gamma$ 
of $x_0$ s.t.}  \\
\left\{
\begin{array}{l}
(T-t) < \min\left( \frac{\epsilon }{100c_7\textcolor{black}{(1+\mathcal{M})}}, \epsilon \right)  \ \text{ and } \  
 |\eta(x,t) - \eta(x_0,t)| < {\frac{\epsilon }{2c_7}}  \\
|\nn(\eta(x,t),t) \cdot \eo| +  |\tt(\eta(x,t),t) \cdot \et| < \epsilon  
\end{array}\right\}    \ \forall \ x \in \gamma_0(\textcolor{black}{\epsilon}), \ t\in [t_0(\textcolor{black}{\epsilon}),T)  \,,
\end{array} 
\end{equation} 
where the constant $c_7$ was defined in (\ref{cs-c7})
%
%
%
Consequently,  from (\ref{ns18.e}), (\ref{eq15}) and (\ref{ns18.g}), we see that
\begin{equation*}
\left|\frac{\p u_2^-}{\p x_1}(\eta(x,t),t)\right|\le \frac{ 3 \epsilon}{T-t} + |\nabla_\ttt u^-\cdot\nn|(\eta(x,t),t) \textcolor{black}{+2\epsilon}| \nabla_\nnn u^-\cdot \tt|(\eta(x,t),t)\,,
\end{equation*} 
which thanks to (\ref{ns18.e}) and the fact that $ \operatorname{curl} u^-= \nabla _\ttt u^- \cdot \nn - \nabla _\nnn u^-\cdot \tt=0$, provides us with
\begin{equation*}
\left|\frac{\p u_2^-}{\p x_1}(\eta(x,t),t)\right|\le \frac{ 3 \epsilon}{T-t} + c_8\mathcal{M} \ \ \forall \ x \in \gamma_0(\textcolor{black}{\epsilon}), \ t\in [t_0(\textcolor{black}{\epsilon}),T)\,,
\end{equation*}
for a constant $c_8>0$.
Thus , by choosing $t_0(\epsilon)$ closer to $T$  if necessary, we have  that
\begin{equation}\label{jj0}
\left|\frac{\p u_2^-}{\p x_1}(\eta(x,t),t)\right|\le \frac{3 \epsilon}{T-t} \ \ \forall \ x \in \gamma_0(\textcolor{black}{\epsilon}), \ t\in [t_0(\textcolor{black}{\epsilon}),T)\,.
\end{equation}

In a similar fashion, we
\begin{equation}
\label{eps_def2} 
\begin{array}{l}
\text{choose $t_0(\textcolor{black}{\epsilon}) \in [0,T)$  and a sufficiently small neighborhood $\gamma_1(\textcolor{black}{\epsilon}) \subset \Gamma$ 
of $x_1$ s.t.}  \\
\left\{
\begin{array}{l}
(T-t) < \min\left( \frac{\epsilon }{100c_7\textcolor{black}{(1+\mathcal{M})}}, \epsilon  \right)  \ \text{ and } \  
 |\eta(x,t) - \eta(x_1,t)| < {\frac{\epsilon }{2c_7}}  \\
|\nn(\eta(x,t),t) \cdot \eo| +  |\tt(\eta(x,t),t) \cdot \et| < \epsilon  
\end{array}\right\}    \ \forall \ x \in \gamma_1(\textcolor{black}{\epsilon}), \ t\in [t_0(\textcolor{black}{\epsilon}),T) 
\end{array} 
\end{equation} 
and such that the  inequality (\ref{eps_def}) holds.  
 Now, we choose $x \in \Gamma$ but in the complement of $\gamma_0(\textcolor{black}{\epsilon}) \cup \gamma_1(\textcolor{black}{\epsilon})$.  For such
 an $x$, we have that
 $|\nabla u^-(\eta(x,t),t)| \le \mathcal{M} _ \epsilon < \infty $.  This bound is obtained as follows.

\def\bepst{\mathcal{B} _{\epsilon ,t}}
For each $t\in [t_0(\epsilon),T)$, we let $\bepst \subset \mathbb{R}^2  $ denote a small closed ball containing
$\eta(\gamma_0({\epsilon}) ,t) \cup \eta(\gamma_1({\epsilon}),t)$.
 \textcolor{black}{The ball $\bepst$ can be taken with a fixed radius independent of $t\in [t_0(\epsilon),T)$ (for $T-t_0( \epsilon )$ sufficiently small),
with a center which is simply translated as $t$ varies.  This is possible as we assume at that there is a single point of self-intersection
for the curve $\Gamma(T)$, and so the width of the domain $\Omega^-(t)$ cannot shrink to zero  in other locations as $t \to T$.}
With the unit tangent vector field $\tt$ defined on $\Gamma(t)$, we  define a smooth extension of $\tt$ to
the set $\Omega^-(t) \cap \bepst^c$, which is  possible since the interface $\Gamma(t)\cap \bepst^c$ remains $W^{4, \infty }$ for
all $t\in[0,T]$; we continue to denote this extension by $\tt$, and we note that the extension of $\tt$ does not necessarily 
have modulus $1$.
Since $\Gamma(t) \cap \bepst^c$ does not self-intersect for all $t\in[0,T]$ by the hypothesis (1) of Theorem \ref{thm1}, 
 there exists a minimum positive radius $r_ \epsilon >0$ such that for all
 $x \in \Gamma(t) \cap \bepst^c$ and all $t \in [t_0( \epsilon) , T]$, there exists a translated open ball $ \mathscr{B}_{\epsilon ,t,x}(r_ \epsilon )\subset\Omega^-(t)$ of radius $r_ \epsilon $  with
 $x \in \partial \mathscr{B}_{\epsilon ,t,x}(r_ \epsilon )$.  In other words, for each $x\in\Gamma(t)$ {\it away from the region of self-intersection}, there exists an open ball of smallest radius $r_ \epsilon$  that is contained in
 the set $\Omega^-(t)$ and such that $x$ is on the sphere of smallest radius.

We note that the radius $r_ \epsilon  \to 0$ as $ \epsilon \to 0$; hence, on 
 the domain $\Omega^-(t) \cap \bepst^c$,
we have an estimate of the type
\begin{equation}
\label{052915.1} 
\|\tt\|_{H^{3}(\Omega^-(t)\cap\bepst^c)}\lesssim C(\mathcal{M},\epsilon)\,,
\end{equation}
where $C(\mathcal{M},\epsilon)>0$ denotes a constant depending on $\mathcal{M}$ and $\epsilon$ (with 
$C( \mathcal{M} , \epsilon ) \to \infty $ as  $\epsilon\rightarrow 0$).

We now introduce the stream function $\psi^-$ such that $\um=\nabla^\perp \psi^-$; then, 
\begin{equation*}
\tt\cdot\nabla\psi^-=u^-\cdot\nn=u^+\cdot\nn\ \text{on}\ \Gamma(t)\,,
\end{equation*}
which then shows, using our bounds in (\ref{bounds}),  that
\begin{equation}
\label{052915.2}
\|\psi^-\|_{H^3(\Gamma(t))}\lesssim C(\mathcal{M}, \epsilon )\,.
\end{equation}
Furthermore, due to the conservation law
\begin{equation}\label{conservationlaw}
\frac{1}{2} \|\um(t)\|_{L^2(\Omega^-(t))}^2+\ \text{length of}\ \Gamma(t)=\frac{1}{2} \|u^-_0\|_{L^2(\Omega^-)}^2+\ \text{length of}\ \Gamma\,,
\end{equation}
we have that
\begin{equation}
\label{052915.3}
\|\psi^-\|_{H^1(\Omega^-(t))}\lesssim C(\mathcal{M}, \epsilon )\,,
\end{equation}
where we have used that $\|\psi^-\|_{H^1(\Omega^-(t))} \le C( \| \nabla \psi^-\|_{ L^2(\Omega^-(t)) }  + \| \psi^-\|_{ H^3(\Gamma(t))}  ) $
 and
(\ref{052915.2}).

Next, we fix $t\in [t_0(\epsilon),T]$, and choose 
 a smooth cut-off function $0\le \varphi(\cdot,t)\le 1$ whose support is contained in the complement of $\bepst$.
 Since $\Gamma(t) \cap \bepst^c$ is assumed to be of class $W^{4, \infty }$ for each $t\in [0,T]$,  we consider the following elliptic problem:
 \begin{subequations}
 \begin{alignat*}{2}
 \Delta  (\varphi \psi^-) & =  2 \nabla \varphi \cdot \nabla \psi^- + \Delta \varphi \ \psi^-
 \qquad  && \text{ in } \Omega^-_\epsilon(t)\,, \\
\varphi\psi ^- &= \varphi\psi^-  && \text{ on } \partial \Omega^-_\epsilon(t)\,, 
 \end{alignat*}
 \end{subequations}
where $\Omega^-_\epsilon(t)$ is a smooth open subset of $\Omega^-(t)$ containing $\Omega^-(t) \cap \bepst^c$,
 and where we have used the fact that $\psi^-$ is harmonic, since  $\operatorname{curl} u^-=0$.
From (\ref{052915.3}), (\ref{052915.2}), we have by elliptic regularity that 
\begin{equation}
\label{052915.4}
\|\varphi\psi^-\|_{H^2(\Omega^-_\epsilon(t))}\lesssim C(\mathcal{M},\epsilon)\,.
\end{equation}
We next consider the elliptic problem:
 \begin{subequations}
 \begin{alignat*}{2}
 \Delta  (\varphi \tt\cdot\nabla(\varphi\psi^-)) & =  2 \nabla (\varphi  \tt_i) \cdot \nabla \frac{\partial{(\varphi\psi^-)}}{\partial x_i} + \Delta (\varphi\tt_i) \ \frac{\partial{(\varphi\psi^-)}}{\partial x_i}   && \\
 & \qquad+  \varphi \tt \cdot \nabla \left[ 2 \nabla \varphi\cdot \nabla \psi^- + \Delta \varphi\, \psi^- \right]
 \qquad  && \text{ in } \Omega^\epsilon\,, \\
\varphi\tt\cdot\nabla(\varphi\psi^-) &= \varphi\tt\cdot\nabla (\varphi\psi^-) && \text{ on } \partial \Omega\epsilon\,, 
 \end{alignat*}
 \end{subequations}
Due to (\ref{052915.4}), (\ref{052915.2}) and (\ref{052915.1}), we have by elliptic regularity:
\begin{equation}
\label{052915.5}
\|\varphi\tt\cdot\nabla(\varphi\psi)\|_{H^2(\Omega^\epsilon)}\lesssim C(\mathcal{M},\epsilon)\,.
\end{equation}
In the same manner as we obtained (\ref{052915.5}) from (\ref{052915.4}), we can also obtain that
\begin{equation}
\label{052915.6}
\|\varphi\tt\cdot\nabla(\varphi\tt\cdot\nabla(\varphi\psi))\|_{H^2(\Omega^\epsilon)}\lesssim C(\mathcal{M},\epsilon)\,.
\end{equation}
By the trace theorem and the Sobolev embedding theorem, we infer from (\ref{052915.6}) that
\begin{equation*}
 \textcolor{black}{ \|\varphi^3   \nabla u^- \cdot\nn (\cdot,t)\|_{L^{\infty}(\Gamma(t))}}\lesssim C(\mathcal{M},\epsilon)\,.
  \end{equation*}

 Since $u^-$ is divergence and curl free this immediately ensures by the algebraic expression of the divergence and curl that
 \begin{equation}
\label{052715.1} 
  \|\varphi^3 \nabla  u^- (\cdot,t)\|_{L^\infty(\Gamma(t))}\lesssim  C( \mathcal{M}, \epsilon )\,,
  \end{equation}
  showing that $\nabla  u^-\cdot\nn(\eta(x,t),t)$ is bounded for $\eta(x,t)$ outside of $\bepst$. 
   Therefore, our previous estimates obtained for $x$ in $\gamma_0(\epsilon)$ and $\gamma_1(\epsilon)$ ensure that for all $x\in\Gamma$, $|\nabla u^-(\eta(x,t),t)| < \frac{\epsilon}{T-t}$ 
  for $t$ sufficiently close  to $T$;   thus,
 for $T-t_0(\textcolor{black}{\epsilon})$  sufficiently small (which means that once again, we have taken $t_0( \epsilon) $ even closer
 to $T$ if necessary),
\begin{equation*}
\max_{y\in\eta(\Gamma,t)} \left|\frac{\p u_2^-}{\p x_1}(y,t)\right|\le \frac{ 3\epsilon}{T-t}\ \ \ \ \forall t\in [t_0({\epsilon}),T)\,,
\end{equation*}
which, thanks to the maximum and minimum principles applied to the harmonic function $\frac{\p u_2^-}{\p x_1}$, provides us with (\ref{u12epsilon}).
Since $0< \epsilon\ll 1$, we replace $3 \epsilon $ by $ \epsilon $, and replace $1+2 \epsilon $ by $1+ \epsilon $.  This completes the proof.
\end{proof} 

\begin{corollary} \label{cor2}
 With  (\ref{bounds}) and  (\ref{assump1}) holding, 
 and for $ 0 <\epsilon \ll 1$,  there exists   $t_0( \epsilon ) \in [T- \epsilon , T) $ such that
\begin{equation}\label{improved}
\| \nabla_{\ttt} \um \cdot\tt ( \cdot , t) \|_{L^ \infty(\Gamma(t))} \le \frac{1+ (1+2c_6)(T-t)}{T-t} \ \ \forall t\in [{ t_0( \epsilon )},T) \,,
\end{equation} 
where the constant $c_6$ is defined in (\ref{ns18.c}).
\end{corollary}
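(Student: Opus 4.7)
The plan is to sharpen the blow-up rate (\ref{eq14}) of Theorem \ref{thm_blowup} by a bootstrap argument, replacing its multiplicative $\epsilon$-correction with an additive $O(c_6)$ correction. The starting observation is that the derivation of (\ref{eq14}) in Step 2 of the proof of Theorem \ref{thm_blowup} actually produces a two-sided bound
\begin{equation*}
\frac{1}{(T-s)(1+\epsilon)} \le \mathscr{X}(x,s) \le \frac{1}{(T-s)(1-\epsilon)} \qquad \forall\,s\in[t_0(\epsilon),T)
\end{equation*}
for every $x\in\Gamma$ satisfying the threshold (\ref{ns18.c}) with $\lim_{s\to T}\mathscr{X}(x,s)=+\infty$; the crucial consequence is the pointwise decay $1/\mathscr{X}(x,s)\le(1+\epsilon)(T-s)$, which is dramatically sharper than the flat bound $\epsilon/(8c_6)$ used in proving (\ref{eq14}).

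Feeding this refined decay back into the identity underlying (\ref{ns18bis}) yields, for $T-t_0(\epsilon)$ sufficiently small,
\begin{equation*}
\left|\int_t^T\left(\frac{\mathfrak{A}(x,s)}{\mathscr{X}(x,s)} + \frac{\mathcal{A}(x,s)}{\mathscr{X}^2(x,s)}\right)ds\right|
\le c_6(1+\epsilon)\int_t^T(T-s)\,ds + O\bigl((T-t)^3\bigr) \le c_6(T-t)^2.
\end{equation*}
Substituting this back into (\ref{ns18bis}) gives
\begin{equation*}
\frac{1}{\mathscr{X}(x,t)} \ge (T-t)-c_6(T-t)^2 = (T-t)\bigl(1-c_6(T-t)\bigr),
\end{equation*}
and shrinking $t_0(\epsilon)$ further so that $c_6(T-t)\le\tfrac12$, the elementary inequality $\tfrac{1}{1-y}\le 1+2y$ on $[0,\tfrac12]$ gives
\begin{equation*}
\mathscr{X}(x,t) \le \frac{1+2c_6(T-t)}{T-t} = \frac{1}{T-t}+2c_6.
\end{equation*}

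To conclude, I use the decomposition $\nabla_\ttt u^-\cdot\tt = \nabla_\ttt u^+\cdot\tt - \nabla_\ttt\delta u\cdot\tt$ on $\Gamma(t)$ together with $(\nabla_\ttt\delta u\cdot\tt)\circ\eta = \mathscr{X}$ and the assumed bound $|\nabla_\ttt u^+\cdot\tt|\le\mathcal{M}$, noting that $|\mathfrak{A}|\le c_6$ forces $\mathcal{M}$ to be controlled by $c_6$; after absorbing a mild constant into the universal additive "$1$" in (\ref{improved}), this yields the desired inequality. For $x\in\Gamma$ such that $\mathscr{X}(x,\cdot)$ never crosses the threshold of (\ref{ns18.c}), the ODE comparison $\mathscr{X}_t>\mathscr{X}^2/2-c_5$ from Step 2 keeps $|\mathscr{X}(x,t)|$ uniformly bounded on $[0,T]$, so the target inequality is trivially satisfied as $\tfrac{1}{T-t}\to\infty$.

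The main obstacle is recognising that the bootstrap is needed at all: the crude threshold bound $\mathscr{X}\ge 8c_6/\epsilon$ only produces an integral of order $\epsilon(T-t)$, which translates into an error of order $\epsilon/(T-t)$ in $\mathscr{X}$ — arbitrarily large precisely in the regime $T-t\ll\epsilon$ where (\ref{improved}) must hold. The remedy is to promote this to the pointwise decay $1/\mathscr{X}(x,s)\lesssim T-s$ by exploiting the dual lower blow-up rate for $\mathscr{X}$ that is implicit in the proof of (\ref{eq14}); this yields an integral of order $(T-t)^2$, which is what ultimately converts the multiplicative $\epsilon$-correction into the additive constant $2c_6$.
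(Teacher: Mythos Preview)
Your bootstrap argument correctly reproduces the paper's derivation of (\ref{chi_est0}) at the contact point $x_0$: once you know $\lim_{t\to T}\mathscr{X}(x_0,t)(T-t)=1$, feeding the two-sided bound $(1-\epsilon)(T-s)^{-1}\le\mathscr{X}(x_0,s)\le(1+\epsilon)(T-s)^{-1}$ back into the identity (\ref{ns18bis}) does yield $\mathscr{X}(x_0,t)\le(T-t)^{-1}+2c_6$. The problem is that your case analysis is incomplete. You treat only (A) points $x$ with $\lim_{t\to T}\mathscr{X}(x,t)=+\infty$, and (B) points where $\mathscr{X}(x,\cdot)$ never crosses the threshold (\ref{ns18.c}). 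There is a third case: (C) $x$ crosses the threshold but $\lim_{t\to T}\mathscr{X}(x,t)=L<\infty$. Such points exist in abundance near $x_0$, since $|\eta'(x,T)|$ is small but positive for $x$ close to $x_0$, making $\mathscr{X}(x,T)=G(x,T)\,\delta v'\cdot\tau(x,T)$ large but finite; and $L$ is \emph{not} uniformly bounded as $x\to x_0$. For such $x$ your bootstrap collapses: the key input is the decay $1/\mathscr{X}(x,s)\le(1+\epsilon)(T-s)$, which requires the \emph{lower} blow-up bound $\mathscr{X}(x,s)\ge(1-\epsilon)(T-s)^{-1}$, and this fails in case~(C) since $1/\mathscr{X}(x,s)\to 1/L>0$ as $s\to T$. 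Without it the integral $\int_t^T\mathfrak{A}/\mathscr{X}$ is only $O((T-t)/L)$, not $O((T-t)^2)$, and you recover nothing sharper than (\ref{eq14}). Equally, the crude bound $\mathscr{X}(x,t)\le L$ does not give (\ref{improved}) on any interval $[t_0(\epsilon),T)$ with $t_0(\epsilon)$ independent of $x$, because $L\to\infty$ as $x\to x_0$.

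The paper handles precisely this non-uniformity by a comparison argument rather than a direct bootstrap. It assumes for contradiction that (\ref{improved}) fails along a sequence $(x^*,t^*)$, argues that $x^*\to x_0$, and then studies $\mathscr{Y}(t)=\mathscr{X}(x^*,t)-\mathscr{X}(x_0,t)$. Subtracting the Riccati equations (\ref{Xeq}) gives a linear ODE for $\mathscr{Y}$ with coefficient $\mathscr{P}=\mathscr{X}(x^*,\cdot)+\mathscr{X}(x_0,\cdot)-\mathfrak{A}(x^*,\cdot)$; the Duhamel formula, together with $\mathscr{Y}(t^*)>1$ from (\ref{chi_est0}) and the smallness of $\delta\mathfrak{A},\delta\mathcal{A}$ (since $|x^*-x_0|<\epsilon$), forces $\mathscr{Y}(t)\ge 0$ for all $t\in[t^*,T)$. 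Hence $\mathscr{X}(x^*,t)\ge\mathscr{X}(x_0,t)\to\infty$, so $x^*$ must itself be a contact point, and since contact points are finitely many and $x^*\to x_0$, eventually $x^*=x_0$, contradicting (\ref{chi_est0}). This comparison step is the missing ingredient in your argument; the bootstrap alone cannot deliver the uniformity in $x$ that (\ref{improved}) asserts.
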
 
\begin{proof}  Using the notation from the proof of Theorem \ref{thm_blowup}, 
$$
 \X(x,t) =  \nabla_{\ttt} \delta u\cdot\tt ( \eta(x,t) , t) \,.
$$
and we recall that $\X(x_0,t) = \chi(t)$  and that  $\X(x,t)$ satisfies 
\begin{equation}\label{Xeq}
 \X_t(x,t)- \X ^2(x,t) + \mathfrak{A}(x,t )\, \X(x,t) = - \mathcal{A} (x,t) \,.
\end{equation} 

We let  $ \delta t = T-t$, 
and fix $0 < \epsilon \ll 1$.   
 Since $\lim_{t\rightarrow T}  \X(x_0,t) ({T-t})=1$,   for $  \delta t$ sufficiently small, we have that
 $$(1- \epsilon) \delta t ^{-1} \le \X(x_0,t) \le (1+ \epsilon ) \delta t^{-1} .$$
 Substituting this inequality into (\ref{ns18bis}), we see that
\begin{equation}\label{chi_est0}
\X(x_0,t) \le \frac{1}{ (1- c_6 \delta t) \delta t} \le \frac{ 1+ 2 c_6 \delta t}{ \delta t} \,.
\end{equation} 
If we replace $x_0$ with $x_1$, then (\ref{chi_est0}) continues to hold.

Now, for the sake of contradiction, we will assume that there exists a sequence of points $(x^*, t^*)$,  with $x^* \in \Gamma$ and $t^*$ converging to $T$, such that
\begin{equation}\label{assume10}
\X(x^*,t^*) > \frac{1+ (1+2c_6) (T-t^*)}{ T-t^*} \,.
\end{equation} 
We will later prove that set of possible contact points $ x_i \in \Gamma$,  such that $\eta(x_0,T) = \eta(x_1,T) = \eta(x_i,T)$,  is finite.
Then, from this set of all possible reference points which can self-intersect at time $t=T$, we relabel $x_0$ so that $x_0$ is the limit
of a subsequence of points $x^*$ converging toward it along $\Gamma$.   Henceforth, we restrict the sequence of points $(x^*, t^*)$ to the subsequence which converges to the point $x_0$.

\textcolor{black}{By Remark 7, if there exists $C>0$ such that $|\nabla u^-(\eta(x_0,t),t)|\le C$ for any $t\in [t_0(\epsilon), T)$, we would also have the existence of a neighborhood of $x_0$ on $\Gamma$ such that for any $x$ in this neighborhood, $|\nabla u^-(\eta(x,t),t)|\le 2 C$ for any $t\in [t_0(\epsilon), T)$. This would then make (\ref{assume10}) impossible. Therefore, we have $\X(x_0,T)\rightarrow\infty$ as $t\rightarrow T$.}

We assume that  this
point is $x_0$ (for otherwise we can reverse the labels on the two points $x_0$ and $x_1$).    Notice 
that since $x^* \to x_0$ as $t^* \to T$, then
for $T-t^*$ sufficiently small,
\begin{equation}\label{xx0}
| x^* - x_0| < \epsilon  \,.
\end{equation}

\def\epm{ e^{-\int_{t^*}^t \P(s)ds }    }
\def\epms{ e^{-\int_{t^*}^s \P(r)dr }    }
\def\epp{ e^{\int_{t^*}^t \P(s)ds }    }
We define
\begin{gather*}
\Y(t) = \X(x^*,t) - \X(x_0,t) \text{ and } \Z(t) = \X(x^*,t) + \X(x_0,t) \,, \\
\delta \mathfrak{A}(t) = \mathfrak{A}(x^*,t) -  \mathfrak{A}(x_0,t) \text{ and } \delta \mathcal{A} (t) = \mathcal{A} (x^*,t) -  \mathcal{A} (x_0,t)\,.
\end{gather*}
Then,  setting $\P(t) =\Z(t)-\mathfrak{A}(x^*,t)$,  from (\ref{Xeq}),  $ \Y(t)$ satisfies 
$$
\Y_t (t) - \P(t)  \Y(t)  =- \delta \mathfrak{A}(t) \X(x_0,t)  - \delta \mathcal{A} (t) \,,
$$
and hence
$$
\left[ \epm \Y(t)\right]_t = - \epm \left[ \delta \mathfrak{A}(t) \X(x_0,t)  + \delta \mathcal{A}(t)\right]\,.
$$
Integrating from $t^*$ to $t$, we see that
\begin{equation}\label{Yformula}
 \Y(t) =  \epp \left( \Y(t^*)-\int_{t^*}^t \epms  \left[ \delta \mathfrak{A}(s) \X(x_0,s)  + \delta \mathcal{A}(s)\right]\, ds \right)\,.
\end{equation} 

Our goal is to show that $\Y(t) \ge 0$, for all $t\ge t^*$.  
By (\ref{chi_est0}) and (\ref{assume10}), we see that 
\begin{equation}
\label{kappa2}
\Y(t^*) > 1\,,
\end{equation}
so all we need to prove is that the second term on the right-hand side of (\ref{Yformula}),
\begin{equation}
\label{kappa}
\kappa(t^*,t)=-\int_{t^*}^t \epms  \left[ \delta \mathfrak{A}(s) \X(x_0,s)  + \delta \mathcal{A}(s)\right]\, ds\,,
\end{equation} 
is very small for $t^*$ and $t$ close to $T$.

We first consider $-\int_{t^*}^s \P(r)dr$ which is equal to $-\int_{t^*}^s \Z(r)dr + \int_{t^*}^s \mathfrak{A}(x^*,r)dr$.  Since $\X(x^*,t)$ is positive,
we see that $\Z(t) > \X(x_0,t)$ and so $-\Z(t) < -\X(x_0,t)$, and as we noted above, $\X(x_0,t) > (1- \epsilon ) \delta t ^{-1} $.  Hence
$-\int_{t^*}^s \Z(r)dr < -\int_{t^*}^s \X(x_0,r)dr$, so that
$$
e^{- \int_{t^*}^s \Z(r)dr}<
e^{- \int_{t^*}^s \X(x_0,r)dr} \le e^{- \int_{t^*}^s\frac{1- \epsilon }{T -r}dr}  =\left[\frac{ T-s}{T-t^*}\right]^ { 1 - \epsilon } 
$$
and since
$e^{ \int_{t^*}^s \mathfrak{A}(x^*,r)dr}  \lesssim \mathcal{M} $,  then
$$
e^{- \int_{t^*}^s \P(r)dr}\lesssim  \mathcal{M} \left[\frac{ T-s}{T-t^*}\right]^ { 1 - \epsilon } \,.
$$

From (\ref{kappa}), we see that
\begin{align*} 
|\kappa(t^*,t)|&\lesssim \mathcal{M}   \int_{t^*}^t  
 \left[\frac{ (T-s)}{T-t^*}\right]^ { 1 - \epsilon }\left( \frac{1+ \epsilon }{T-s} \delta \mathfrak{A}(s) + \delta \mathcal{A}(s)\right) ds \\
& 
\lesssim \frac{ \mathcal{M} (1+ \epsilon )}{(T-t^*)^{1- \epsilon }}  \int_{t^*}^t  (T-s)^{ - \epsilon } \delta \mathfrak{A}(s)ds 
+ \frac{\mathcal{M} }{(T-t^*)^{1- \epsilon }}  \int_{t^*}^t   (T-s)^ { 1 - \epsilon }\delta \mathcal{A}(s)ds \,.
 \end{align*} 
Let $\vec {r}$ denote a unit-speed parameterization of the path $\gamma\subset \Gamma$ starting at $x_0$ and ending at $x^*$.
From (\ref{Acoeff}), $ \mathfrak{A}(x,t) = 2 G {v^+}' \cdot \tau (x,t) $, so that  thanks to our assumed bounds (\ref{bounds}), we see that
$$
\delta \mathfrak{A}(t) = \int_\gamma \nabla \mathfrak{A} \cdot d\vec{r} \lesssim  \mathcal{M}  | x^* - x_0| \lesssim \mathcal{M}  \epsilon \,,
$$
the last inequality following from (\ref{xx0}).   It follows that
\begin{align*} 
|\kappa(t^*,t)| &\lesssim
\frac{ \epsilon \mathcal{M} (T-t)^{1- \epsilon }}{(T-t^*)^{1- \epsilon }}  +  \epsilon \mathcal{M} 
+\frac{  \mathcal{M} (T-t)^{2- \epsilon }}{(T-t^*)^{1- \epsilon }}  + \mathcal{M}  (T-t^*) \\
& \lesssim \mathcal{M}  \left[ \epsilon + (T-t^*)\right] \ \ \ \forall t \in [t^*,T) \,.
\end{align*} 
Hence, for $T-t^*$ sufficiently small, and $t\in [t^*,T)$, we have $|\kappa(t^*,t)|<1$. Thanks to (\ref{kappa2}), this implies that for such any such $t^*$,  and for all 
$t\in [t^*,T)$, $\Y(t)\ge 0$, which by the definition of $\Y(t)$,  implies that
\begin{equation*}
\X(x^*,t)\ge \X(x_0,t)\,,
\end{equation*}
and thus $\lim_{t\rightarrow T} \X(x^*,t)=\infty$. Now, from our assumption of a single splash contact in this section, this implies that either $x^*=x_0$ or $x^*=x_1$ or $x^* = x_i$.
Since $x^*$ is sequence in $\Gamma$ converging to $x_0$, we then have $x^*=x_0$. 
Thus, by (\ref{chi_est0}) and (\ref{assume10}), we then have
\begin{equation*}
1<0\,,
\end{equation*}
which is the contradiction needed to establish that our assumption (\ref{assume10}) was wrong. 
   
By definition of $\X(x,t)$, this then shows that $\sup_{y\in \Gamma(t)}
|\nabla_{\ttt} \delta u \cdot\tt ( \cdot , t)| \le \frac{1+ (1+2c_6)(T-t)}{T-t}$ for all $t\in [ t_0(\epsilon ),T)$ with $T-{ t_0(\epsilon )}$ taken sufficiently small.   Together  with our assumed bounds (\ref{bounds}) on $u^+$, 
this   completes the proof.
\end{proof}

\section{The interface geometry near the assumed blow-up}\label{sec_geometry}

For the sake of contradiction, we assume the existence of two points $x_0$ and $x_1$ in the reference interface 
$\Gamma$  at time $t=0$  which  evolve towards
a splash singularity at time $t=T$; namely $\eta(x_0,T)=\eta(x_1,T)$.  \textcolor{black}{ In this section,} we assume that this is the only point of self-intersection 
at time $t=T$, and that no self-intersection of $\Gamma(t)$ occurs for any $t<T$. 
There may indeed also exist additional  points $x_i\in \Gamma$,  such that 
$\eta(x_i,T)=\eta(x_0,T)=\eta(x_1,T)$, but  in the course of our analysis,  we will prove that there can only be
a finite number of such points.    In the case that these
additional points $x_i \in \Gamma$ exist, we moreover show that we can  relabel the point $x_1$ so that for time $t$ sufficiently close to $T$, 
the points 
$\eta(x_1,t)$  and $\eta(x_0,t)$ are such that the vertical open segment joining $\eta(x_0,t)$ to a small neighborhood of $\eta(x_1,t)$ on $\Gamma(t)$ is contained in $\Omega^-(t)$,
as we depict in Figure \ref{fig_local}.   We will then prove that our assumption
of a finite-time splash singularity leads to a contradiction, and is hence impossible.

 If a splash singularity occurs 
at time $T$, then of course 
$\lim_{t \to T} \left|  \eta(x_0,t)-\eta(x_1,t)\right| =0$.   In this section, we find the evolution equation for the distance between the two  points
$\eta(x_0,t)$ and $\eta(x_1,t)$.

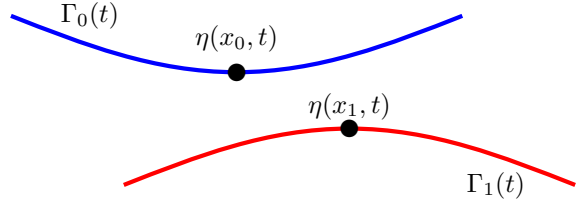
\begin{figure}[h]
 \begin{tikzpicture}[scale=1.5]
            \draw[ultra thick, blue] (4,1.) sin (6,.5);    
    \draw[ultra thick, blue] (6,.5) cos (8,1.);    
      \draw[ultra thick, red] (5,-.5) sin (7,0);    
    \draw[ultra thick, red] (7,0) cos (9,-.5);    
     

      \draw (6,.8) node { $\eta(x_0,t)$}; 
        \draw (6.,.5) node { $\newmoon$}; 
       
          \draw (7,.2) node { $\eta(x_1,t)$}; 
        \draw (7,0) node { $\newmoon$}; 
      
        \draw (4.7,1.) node { $\Gamma_0(t)$}; 
      \draw (8.3,-.5) node { $\Gamma_1(t)$};
  \end{tikzpicture} 
  \caption{{\small For $t$ sufficiently close to $T$, the interface $\Gamma(t)$ has a local neighborhood of $\eta(x_0,t)$ called 
  $\Gamma_0(t):= \eta( \gamma_0( \epsilon ), t) $  and a local neighborhood
  of $\eta(x_1,t)$ called $\Gamma_1(t):= \eta( \gamma_1( \epsilon ), t) $.  }}\label{fig_local}
\end{figure}

Recall that the
 tangent and normal directions to $\Gamma(T)$ at  $\eta(x_0,T)=\eta(x_1,T)$
 are given by the standard basis vectors $\eo=(1,0)$ and $\et=(0,1)$, respectively.
 In what follows, we will consider $0< \epsilon\ll 1$ fixed \textcolor{black}{ and sufficiently small}.
 

 With $\Gamma$ denoting the initial interface at time $t=0$ and $0 < \epsilon \ll 1$, 
recall the definition of the two small neighborhoods $\gamma_0(\epsilon)\subset \Gamma$ and $\gamma_1(\epsilon)\subset \Gamma$ given in
(\ref{eps_def}) and (\ref{eps_def2}), respectively.   According to these definitions, we may fix $ \epsilon >0 $ sufficiently small so that   for each 
$x\in \gamma_0(\epsilon)\cup\gamma_1(\epsilon)$ and for all $t\in [t_0(\epsilon),T]$,  
$\nn(\eta(x_i,t),t)$ and $\tt(\eta(x_i,t),t)$ ($i=0,1$) are almost parallel with 
$\et$ and $\eo$, respectively; in particular, the inequalities given in (\ref{eps_def}) and (\ref{eps_def2}) {\it provide a quantitative estimate} for the term
``almost parallel.''  Hence, \textcolor{black}{ by the definition of (\ref{eps_def}) and (\ref{eps_def2})},
$$\Gamma_0(t):=\eta(\gamma_0(\epsilon),t) \text{ and } \Gamma_1(t):=\eta(\gamma_1(\epsilon),t)$$
are almost flat neighborhoods  of $\eta(x_0,t)$ and $\eta(x_1,t)$ for all  $t\in [t_0(\epsilon),T]$.

Next, we define
\begin{equation}\label{aom2}
\dn(t)=\eta(x_0,t) - \eta(x_1,t)\ \text{ and } \ \du (t) = \um ( \eta(x_0,t),t) - \um ( \eta(x_1,t),t) \,,
\end{equation} 
and
$$ \dn_1 = \dn \cdot \eo\,, \  \dn_2 = \dn \cdot \et \ \text{ and } \  \duo = \du \cdot \eo\,, \  \dut = \du \cdot \et\,.
$$

Since $\eta$ is the flow of the velocity $\um$,  we see that
for any $t\in [t_0( \epsilon ),T)$,
\begin{equation}
\partial_t \dn=\um(\eta(x_0,t),t)-\um(\eta(x_1,t),t)\,.  \label{deltaeta}
\end{equation}

\begin{definition}[Distance function on $\Gamma(t)$]
We denote by $d_{\Gamma(t)}(X,Y)$ the distance along $\Gamma(t)$ between two points $X$ and $Y$ of $\Gamma(t)$.
Let $\gamma_{X,Y}(t)\subset \Gamma(t)$
denote that portion of $\Gamma(t)$ connecting the points $X$ and $Y$.
\end{definition} 
In order to establish our main result, we need the following lemmas.

\begin{lemma} \label{lemma_6.1} Let $X$ and $Y$ denote two points in $\Gamma(t)$.  Then,
$$
\bigl | \tt(X,t)-\tt(Y,t)\bigr | \le \mathcal{M} d_{\Gamma(t)}(X,Y)\,,
$$
and\textcolor{black}{, if $X_1\ge Y_1$ and $\tt_1\ge 0$ on $\gamma_{X,Y}(t)$,}
$$
 \textcolor{black}{X_1 - Y_1} \ge  \min_{ Z\in \gamma_{X,Y}(t)} \tt_1(Z,t) \  d_{\Gamma(t)}(X,Y)\,.
$$
\end{lemma}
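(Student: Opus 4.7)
The plan is to parameterize the sub-arc $\gamma_{X,Y}(t)$ by arc length and reduce both statements to one-variable calculus along $\Gamma(t)$.

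More precisely, let $L = d_{\Gamma(t)}(X,Y)$ and let $\sigma:[0,L]\to\gamma_{X,Y}(t)\subset\Gamma(t)$ be the unit-speed parameterization chosen so that its velocity agrees with the unit tangent field on $\Gamma(t)$, i.e.\ $\sigma'(s)=\tt(\sigma(s),t)$. Along such a parameterization, the planar Frenet formula reads $\sigma''(s) = \kappa(\sigma(s),t)\,\nn(\sigma(s),t)$, where $\kappa$ is the signed curvature of $\Gamma(t)$; with the convention used in the paper ($H$ equals twice the mean curvature, which in two space dimensions is the curvature), one has $|\kappa| = |H|$. Integrating,
\begin{equation*}
\tt(X,t)-\tt(Y,t) \;=\; \int_0^L \frac{d\tt}{ds}(\sigma(s),t)\,ds \;=\; \int_0^L \kappa(\sigma(s),t)\,\nn(\sigma(s),t)\,ds,
\end{equation*}
so taking absolute values and using the bound \eqref{bounds} (which gives $\|H(\cdot,t)\|_{L^\infty(\Gamma(t))}\le\mathcal{M}$) yields
\begin{equation*}
|\tt(X,t)-\tt(Y,t)| \;\le\; \int_0^L |H(\sigma(s),t)|\,ds \;\le\; \mathcal{M}\,L \;=\; \mathcal{M}\, d_{\Gamma(t)}(X,Y),
\end{equation*}
which is the first inequality.

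For the second inequality, the hypotheses $X_1\ge Y_1$ and $\tt_1\ge 0$ on $\gamma_{X,Y}(t)$ are exactly what is needed to orient the parameterization $\sigma$ from $Y$ to $X$ (so that $\sigma(0)=Y$, $\sigma(L)=X$). Projecting onto $\eo$ and applying the fundamental theorem of calculus,
\begin{equation*}
X_1-Y_1 \;=\; \int_0^L \sigma'(s)\cdot\eo\,ds \;=\; \int_0^L \tt_1(\sigma(s),t)\,ds \;\ge\; L\,\min_{Z\in\gamma_{X,Y}(t)}\tt_1(Z,t),
\end{equation*}
which is exactly the claimed lower bound.

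There is really no hard step here: the only subtle point is matching the orientation of $\sigma$ with the sign conventions in the statement, which is automatic given the hypotheses $X_1\ge Y_1$ and $\tt_1\ge 0$ on $\gamma_{X,Y}(t)$. The content of the lemma is simply that the unit tangent field is Lipschitz in arc length with constant controlled by $\|H\|_{L^\infty}$ (hence by $\mathcal{M}$), and that horizontal displacement along $\Gamma(t)$ dominates arc length whenever the horizontal component of $\tt$ is bounded below.
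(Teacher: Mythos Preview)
Your proof is correct and follows essentially the same approach as the paper: parameterize the arc, use the fundamental theorem of calculus, and invoke the Frenet relation $\frac{d\tt}{ds}=H\nn$ together with the bound $\|H\|_{L^\infty(\Gamma(t))}\le\mathcal{M}$ from \eqref{bounds}. The only cosmetic difference is that you work directly with an arc-length parameterization of $\Gamma(t)$, whereas the paper pulls back to the reference curve $\Gamma$ via $\eta\circ\theta$ and carries the arc-length element $|\tilde\eta'(s,t)|$ explicitly; your choice is slightly cleaner.
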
 
\begin{proof} 
  Let $\theta:[0,1] \to \Gamma$ denote a $W^{4, \infty }$-class
 parameterization of the reference interface $\Gamma$.
 There exists $ \alpha (t) ,\beta(t) \in [0,1]$ such that  
 $X =\eta(\theta(\alpha(t)),t)$  and 
 $Y=\eta(\theta(\beta(t)),t)$.

We set $\tilde \eta = \eta \circ \theta$. Then for  $ \alpha (t) \le s \le \beta(s)$, we have that
$$
\tt(\tilde\eta(\alpha(t),t),t)   - \tt(\tilde\eta(\beta(t),t),t) 
=\int_{\beta(t)}^{\alpha(t)}   \frac{d}{ds}   \tt(\tilde\eta(s,t),t)      ds\,.$$
We write $ \tt(\tilde\eta(s,t),t) $ as $ \tt(\tilde\eta) (s,t)$ and employ the chain-rule to find that
 \begin{align*}
\tt(\tilde\eta(\alpha(t),t),t)   - \tt(\tilde\eta(\beta(t),t),t) 
&=\int_{\beta(t)}^{\alpha(t)}  \partial_i \tt (\tilde \eta) (s,t)\  \tilde \eta_i'(s,t)\ ds\nonumber\\
&=\int_{\beta(t)}^{\alpha(t)} \nabla _\ttt \tt (\tilde\eta)(s,t)\ \bigl|\tilde\eta'(s,t)\bigr| ds\nonumber\\
&=\int_{\beta(t)}^{\alpha(t)} H\nn (\tilde\eta)(s,t)\  \bigl|\tilde \eta'(s,t) \bigr|ds \,,
\end{align*}
where from (\ref{tan_der}),  $ \nabla _\ttt \tt (\tilde\eta)=G ( G\tilde\eta')'$ which is equal to $H\nn (\tilde\eta)$.
Therefore,  from (\ref{bounds}),
\begin{equation}
\nonumber
\bigl | \tt(X,t)-\tt(Y,t)\bigr |\le \mathcal{M} \int_{\beta(t)}^{\alpha(t)} |[\eta\circ\theta]'(s,t)| ds\le \mathcal{M}  d_{\Gamma(t)}(X,Y)
\,.
\end{equation}

Next, we have that
\begin{align} 
X_1- Y_1  =\int_{\beta(t)}^{\alpha(t)} (\eta\circ\theta)_1'(s,t)\ ds &= \int_{\beta(t)}^{\alpha(t)}
 \tt_1((\eta\circ\theta)(s,t),t) |(\eta\circ\theta)'(s,t)|\ ds \label{062215.1}\\
 & \ge  \min_{ Z\in \gamma_{X,Y}(t)} \tt_1(Z,t) \  d_{\Gamma(t)}(X,Y)\,,\nonumber
\end{align} 
\textcolor{black}{ if $X_1\ge Y_1$ and $\tt_1\ge 0$ on $\gamma_{X,Y}(t)$.}
\end{proof} 

\begin{lemma} \label{lemma6.2} For $ 0< \epsilon \ll 1$ fixed, let  
 $\gamma_1( \epsilon )$ denote the curve defined in  (\ref{eps_def2}).
 Then, for all $t \in [t_0( \epsilon ), T]$, there exist points  $X^l(t) $ and $X^r(t)$ in the curve $ \eta(\gamma_1( \epsilon ),t)$ 
 such that 
 \begin{align*} 
  \eta_1(x_1,t) -  \frac{\epsilon}{2c_7}  \le X^l_1(t)   \le \eta_1(x_1,t) -   \frac{\epsilon}{4c_7}  
 <  \eta_1(x_1,t) +    \frac{\epsilon}{4c_7}\le  X^r_1(t) \le  \eta_1(x_1,t) +   \frac{\epsilon}{2c_7} \,,
\end{align*} 
 where the constant $c_7$ is defined in (\ref{cs-c7}), $\eta_1= \eta \cdot \eo$, $X^l_1 = X^l \cdot\eo$, and $X^r_1 = X^r \cdot\eo$.
 \end{lemma}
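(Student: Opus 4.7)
The plan is to produce $X^l(t)$ and $X^r(t)$ via the intermediate value theorem applied to the horizontal coordinate along the arc $\Gamma_1(t) := \eta(\gamma_1(\epsilon),t)$, once this coordinate is shown to be strictly monotone in arc length and $\Gamma_1(t)$ is shown to have the requisite arc-length extent on each side of $\eta(x_1,t)$.

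First, I would parameterize $\Gamma_1(t)$ by signed arc length $s$ with $s = 0$ at $\eta(x_1,t)$. By \eqref{eps_def2} together with $|\tt| = 1$, we have $|\tt\cdot \eo| \ge \sqrt{1-\epsilon^2} > 1/2$ throughout $\Gamma_1(t)$; since $\tt$ varies continuously along $\Gamma_1(t)$, the sign of $\tt\cdot \eo$ is constant, and we orient $s$ so that $\tt\cdot \eo \ge \sqrt{1-\epsilon^2}$. Writing $Z(s,t)$ for the point at arc length $s$ and applying the identity (\ref{062215.1}) from the proof of Lemma~\ref{lemma_6.1} to the subarc from $\eta(x_1,t)$ to $Z(s,t)$, one obtains
\begin{equation*}
Z_1(s,t) - \eta_1(x_1,t) = \int_0^s \tt_1(Z(r,t),t)\,dr,
\end{equation*}
so that $s \mapsto Z_1(s,t)$ is continuous and strictly increasing with slope at least $\sqrt{1-\epsilon^2}$.

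Second, I would verify that $\gamma_1(\epsilon)$ can be (implicitly) chosen so that $\Gamma_1(t)$ contains a subarc of arc length $\epsilon/(2c_7)$ on each side of $\eta(x_1,t)$ for every $t \in [t_0(\epsilon),T)$, while still satisfying all clauses of \eqref{eps_def2}. The distance bound $|\eta(x,t)-\eta(x_1,t)| < \epsilon/(2c_7)$ is automatic on such a subarc since Euclidean distance is dominated by arc length. The tangent bound $|\nn\cdot \eo|+|\tt\cdot \et|<\epsilon$ is preserved because, at $\eta(x_1,t)$, the analogue of \eqref{052015.1bis} with $x_1$ in place of $x_0$ gives $|\nn\cdot \eo|+|\tt\cdot \et| \le c_7(T-t)<\epsilon/100$ (using the constraint on $T - t_0(\epsilon)$ built into \eqref{eps_def2}), while the Lipschitz estimate \eqref{cs-c7} (centered at $x_1$) produces a further increment of at most $c_7 \cdot \epsilon/(2c_7) = \epsilon/2$ along the subarc. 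The total $\le 51\epsilon/100 < \epsilon$ respects the tangent condition.

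Finally, the intermediate value theorem applied to $s\mapsto Z_1(s,t)$ on $[0,\epsilon/(2c_7)]$ yields some $s^r \in (0,\epsilon/(2c_7)]$ with $Z_1(s^r,t) - \eta_1(x_1,t) = 3\epsilon/(8c_7) \in [\epsilon/(4c_7), \epsilon/(2c_7)]$, and analogously some $s^l \in [-\epsilon/(2c_7),0)$ on the negative side; setting $X^r(t) := Z(s^r,t)$ and $X^l(t) := Z(s^l,t)$ produces the required points in $\Gamma_1(t) = \eta(\gamma_1(\epsilon),t)$. The main subtlety is the second step: the ``sufficiently small neighborhood'' clause in \eqref{eps_def2} must be quantitatively reconciled with the arc-length extent needed for the IVT to reach horizontal displacement $\epsilon/(4c_7)$ on each side, which hinges on the Lipschitz estimate \eqref{cs-c7} and the near-alignment estimate \eqref{052015.1bis} applied at the second contact point $x_1$.
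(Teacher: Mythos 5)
Your proof is correct and follows essentially the same route as the paper: both arguments convert the near-horizontality of $\tt$ on $\gamma_1(\epsilon)$ from (\ref{eps_def2}) together with the arc-length integral identity of Lemma \ref{lemma_6.1} into two-sided control of the horizontal displacement along $\eta(\gamma_1(\epsilon),t)$. The only differences are cosmetic — the paper selects the point at a prescribed arc-length distance $\tfrac{\epsilon}{4c_7(1-\epsilon)}$ and bounds its horizontal coordinate from above and below, whereas you select it at a prescribed horizontal displacement via the intermediate value theorem — and your Step 2 makes explicit the verification (handled in the paper by ``fixing $\epsilon$ even smaller if necessary'') that $\gamma_1(\epsilon)$ can be taken with enough arc-length extent on each side of $x_1$ while still satisfying all clauses of (\ref{eps_def2}).
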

\begin{proof} 
According to our definition (\ref{eps_def2}) of $ \gamma _1( \epsilon )$, 
\begin{equation*}
\textcolor{black}{|}\tt_1(\eta(x,t),t) \textcolor{black}{|}>1- \epsilon  \ \ \forall \ \ 
 x \in \gamma_1(\epsilon)\,, \ t\in [t_0(\textcolor{black}{\epsilon}),T) \,.
 \end{equation*} 
 \textcolor{black}{Let us assume we are in the case
  \begin{equation}\label{cs1-June19}
\tt_1(\eta(x,t),t) >1- \epsilon  \ \ \forall \ \ 
 x \in \gamma_1(\epsilon)\,, \ t\in [t_0(\textcolor{black}{\epsilon}),T) \,,
 \end{equation} 
 the other case 
 \begin{equation*}
\tt_1(\eta(x,t),t) <-1+ \epsilon  \ \ \forall \ \ 
 x \in \gamma_1(\epsilon)\,, \ t\in [t_0(\textcolor{black}{\epsilon}),T) \,,
 \end{equation*} 
 being treated in a way similar as what follows.}
 Next, let $X$ denote a 
point $ \eta(\gamma_1( \epsilon ),t)$ such that $X_1 < \eta_1(x_1,t)$, and (by fixing $ \epsilon $ even smaller if necessary)
 satisfying
\begin{equation}\label{cs2-June19}
d_{\Gamma(t)}(X, \eta(x_1,t))  =  {\frac{ \epsilon }{4 c_7(1- \epsilon) }} \,.
\end{equation} 

By  (\ref{cs1-June19}), (\ref{cs2-June19}) and Lemma \ref{lemma_6.1},  for all $t\in [t_0(\textcolor{black}{\epsilon}),T)$,
\begin{align*} 
\textcolor{black}{\eta_1(x_1,t)-X_1 } \ge (1- \epsilon) d_{\Gamma(t)}(X, \eta(x_1,t)) 
 \ge    \textcolor{black}{{\frac{ \epsilon }{4 c_7 }} }\,.
\end{align*}  
\textcolor{black}{On the other hand, by (\ref{062215.1}), we also have that
\begin{equation*}
\eta_1(x_1,t)-X_1\le d_{\Gamma(t)} (X,\eta(x_1,t))={\frac{ \epsilon }{4 c_7(1- \epsilon) }} \le \frac{\epsilon}{2c_7}\,,
\end{equation*}
for $\epsilon>0$ small enough.}
 We then set $X^l(t) = X$.  
 
 The same argument also provides the point $X^r(t)$ which is on the right of $\eta(x_1,t)$.
\end{proof}

Our next result establishes the evolution equation for $\dn(t)$.

\def\C{\mathscr C}
\def\Z{\mathscr Z}
\def\uz{z}
\def\oz{z}

\begin{theorem}[Evolution equation for $\dn(t)$] \label{thm_geom2}
With the assumed bounds (\ref{bounds}), and for $ x_0,x_1 \in \Gamma$ such that $|\eta(x_0,t) - \eta(x_1,t)| \to 0$ as $t \to T$, if
$ \left| [\nabla_{\ttt} \delta u\cdot\tt](\eta(x_0,t),t) \right| \to \infty$ as $t\to T$, 
 then for $0 < \epsilon \ll 1$  taken sufficiently small and fixed, and 
 $t_0( \epsilon ) \in [T- \epsilon , T)$, 
 we have that  for all $t \in [t_0( \epsilon ),T)$,
\begin{equation}\label{c0}
\partial_t \dn(t) = \M(t) \dn(t) \text{ where } 
\M(t)=\frac{1}{T-t} \left[\begin{array}{cc} - \beta_1(t)& \varepsilon _1(t)\\
                                         \mathcal{E} _2(t)&  \alpha _2(t)
\end{array}
\right]\,,
\end{equation} 
where the matrix coefficients  
$$ \beta _1(t), \alpha _2(t) \in [-2 \epsilon , 1 + 2c_9(T-t)]\ \  \text{ and } \ \ \varepsilon _1(t), \mathcal{E} _2(t) \in [-2 \epsilon , 2 \epsilon ]  \,, $$
and where $c_9 = 1+2c_6$, where $c_6$ is defined (\ref{ns18.c}).
\end{theorem}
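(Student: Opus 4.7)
The starting point is equation (\ref{deltaeta}), which gives $\partial_t \dn(t) = u^-(X,t) - u^-(Y,t)$ with $X := \eta(x_0,t)$ and $Y := \eta(x_1,t)$. The plan is to rewrite the right-hand side as a line integral of $\nabla u^-$ along a path from $Y$ to $X$ chosen to lie in $\overline{\Omega^-(t)}$, and then to use the interior estimates (\ref{ns18.f})--(\ref{u12epsilon}) of Theorem \ref{thm_blowup}, the boundary bound of Corollary \ref{cor2}, and the curl-free/divergence-free conditions on $u^-$ to bound the averaged entries of $\nabla u^-$ along this path; those averages become the entries of $\M(t)$.

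Concretely, I choose the path $Y \to Y' \to X$, where $Y' \in \Gamma_1(t)$ is the point with $Y'_1 = X_1$ (its existence for $t$ close enough to $T$ follows from Lemma \ref{lemma6.2}): the first leg runs along $\Gamma_1(t)$, and the second is the vertical segment $\{X_1\}\times[Y'_2, X_2]$, which lies in $\Omega^-(t)$ by the hypothesis stated at the opening of Section \ref{sec_geometry}. Along the vertical leg the contribution to $\du_i$ is $(X_2-Y'_2)$ times the average of $\partial_2 u^-_i$ on the segment; using $|\partial_2 u^-_1|=|\partial_1 u^-_2|\le \epsilon/(T-t)$ from (\ref{u12epsilon}) together with curl-freeness, and noting that the almost-horizontal property (\ref{eps_def2}) gives $|X_2-Y'_2|=|\dn_2|+O(\epsilon|\dn_1|)$, this leg contributes only to the off-diagonal part of $\M(t)\dn(t)$ modulo lower-order pieces. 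Along the horizontal leg, I express the line integral as $\int\nabla_\ttt u^-\,d\ell$ and decompose $\nabla_\ttt u^-\cdot\eo$ and $\nabla_\ttt u^-\cdot\et$ in the $(\tt,\nn)$-basis: the leading term is $(\tt\cdot\eo)^2\,\nabla_\ttt u^-\cdot\tt$ (respectively a similar expression for the $\et$-component), while the cross terms involve $\nabla_\ttt u^-\cdot\nn$, bounded by $\mathcal{M}$ via (\ref{ns18.e}), multiplied by $|\nn\cdot\eo|,|\tt\cdot\et|<\epsilon$ from (\ref{eps_def2}); the horizontal arc-length is $|\dn_1|(1+O(\epsilon))$.

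Collecting the contributions, the off-diagonal entries $\varepsilon_1$ and $\mathcal{E}_2$ inherit their bound from $\epsilon/(T-t)$, so $|\varepsilon_1|,|\mathcal{E}_2|\le 2\epsilon$, the extra factor absorbing the $\epsilon$-order geometric corrections. The upper bound $\beta_1\le 1+2c_9(T-t)$, and the analogous bound for $\alpha_2$, follows from Corollary \ref{cor2} via $|\nabla_\ttt u^-\cdot\tt|\le (1+c_9(T-t))/(T-t)$, with the $\epsilon$-corrections absorbed by widening $c_9$ to $2c_9$. The delicate bound is $\beta_1\ge -2\epsilon$, which requires $\nabla_\ttt u^-\cdot\tt$ to be bounded \emph{from above} by a quantity that is $o(1/(T-t))$, much stronger than the two-sided $O(1/(T-t))$ bound of Corollary \ref{cor2}. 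I extract this from equation (\ref{eq13}) in the proof of Theorem \ref{thm_blowup}, which gives $\X:=\nabla_\ttt\delta u\cdot\tt\ge -c_4$ for some constant $c_4$; combined with the $\mathcal{M}$-bound on $\nabla_\ttt u^+\cdot\tt$ via $\nabla_\ttt u^-\cdot\tt=\nabla_\ttt u^+\cdot\tt-\X$, this yields $\nabla_\ttt u^-\cdot\tt\le \mathcal{M}+c_4$, a constant that is $\le 2\epsilon/(T-t)$ once $t_0(\epsilon)$ is taken close enough to $T$. The bound on $\alpha_2$ is then symmetric, via the identity $\partial_2 u^-_2=-\partial_1 u^-_1$ on the straight-line portion.

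The main obstacle is the asymmetric shape of the range $[-2\epsilon,1+2c_9(T-t)]$ for $\beta_1$ and $\alpha_2$: it forces one to recognize that although $\nabla_\ttt u^-\cdot\tt$ blows up as $t\to T$, the blow-up is \emph{one-sided}, a fact not visible from Corollary \ref{cor2} alone but extracted from the one-sided lower bound on $\X$ in the proof of Theorem \ref{thm_blowup}. Book-keeping of the small geometric corrections from (\ref{eps_def2}) must be carried out with care to ensure they fit inside the tolerances $2\epsilon$ and $2c_9(T-t)$ rather than contaminating the dominant terms.
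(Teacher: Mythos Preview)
Your strategy coincides with the paper's: the same broken path (vertical drop from $\eta(x_0,t)$ to the point $\eta(z(t),t)\in\Gamma_1(t)$ with matching horizontal coordinate, then along $\Gamma_1(t)$ to $\eta(x_1,t)$), the same use of (\ref{u12epsilon}) for the off-diagonal entries and Corollary~\ref{cor2} for the diagonal ones, and the same source---the uniform lower bound $\X>-c_4$ from (\ref{eq13})---for the one-sided estimate $\beta_1\ge -2\epsilon$. Your identification of this last point is actually sharper than the paper's own rather terse reference to (\ref{ns19}).

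There are, however, two genuine gaps. First, the containment of the vertical segment in $\overline{\Omega^-(t)}$ is not a hypothesis but a substantial part of the proof. The paper's Step~1 argues that if the segment met $\Omega^+(t)$, the intersection would consist of finitely many intervals whose endpoints lie on loops of $\Gamma(t)$ each of length at least $\frac{1}{\sqrt{2}\mathcal M}$ (via Lemma~\ref{lemma_6.1} and the bound (\ref{061215.1}) on the tangent slope at those endpoints); finiteness of the total length of $\Gamma(t)$ then bounds the number of such loops, and after relabelling $x_1$ among the finitely many preimages of the contact point one arranges the segment to lie in $\overline{\Omega^-(t)}$. This geometric argument is not optional and is where most of the work in the paper's proof lies.

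Second, your derivation of $\alpha_2\ge -2\epsilon$ is incomplete. The one-sided bound $\nabla_\ttt u^-\cdot\tt\le \mathcal M+c_4$ you extract from (\ref{eq13}) holds on $\Gamma(t)$, but $\alpha_2$ is the mean value of $\partial_2 u_2^-=-\partial_1 u_1^-$ along the \emph{vertical} segment, which lies in the interior of $\Omega^-(t)$. To transfer the one-sided bound inward you must observe that on $\Gamma(t)$ near the contact, $\partial_1 u_1^-=[(\tt\cdot\eo)^2-(\nn\cdot\eo)^2]\nabla_\ttt u^-\cdot\tt + 2(\tt\cdot\eo)(\nn\cdot\eo)\nabla_\ttt u^-\cdot\nn$ is bounded above by a constant (using (\ref{ns18.e}) for the cross term), combine this with the away-from-contact bound (\ref{052715.1}), and then invoke the maximum principle for the harmonic function $\partial_1 u_1^-$. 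The paper is equally terse at this step, but the mechanism is the one just described, not simply ``symmetry'' with the $\beta_1$ argument.
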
 
\begin{proof}

   \begin{figure}[h]
 \begin{tikzpicture}[scale=1.5]
   \draw (1,0) -- (1,-.5);
      \draw (.4,1) node { $\Gamma_0(t)$}; 
      \draw (2.4,-.7) node { $\Gamma_1(t)$}; 
    \draw[ultra thick, blue] (0,1) sin (1,0);    
    \draw[ultra thick, blue] (1,0) cos (2,1);    
      \draw[ultra thick, red] (0.8,-.8) sin (1.8,.2);    
    \draw[ultra thick, red] (1.8,.2) cos (2.8,-.8);    
    \draw (1,0) node { $\newmoon$}; 
     \draw (1,-.5) node { $\newmoon$}; 
    \draw (1.8,.2) node { $\newmoon$}; 
     \draw (1,.3) node { $\eta(x_0,t)$}; 
      \draw (2.3,.35) node { $\eta(x_1,t)$}; 
      \draw (1.4,-.7) node { $\eta({z}(t),t)$}; 
      \draw (.75,-.25) node { $\mathfrak{r}_1(t)$}; 
      
      \draw (1.5,-.2) node { $\mathfrak{r}_2(t)$}; 
      
          \draw[ultra thick, blue] (5,1.5) sin (6,.5);    
    \draw[ultra thick, blue] (6,.5) cos (7,1.5);    
      \draw[ultra thick, red] (5.7,-1.2) sin (6.7,-.2);    
    \draw[ultra thick, red] (6.7,-.2) cos (7.7,-1.2);    
    \draw  (6,.5) -- (6,-.75);
    \draw (6.5,-.5) node { $\mathfrak{r}_2(t)$}; 
   
    \draw (5.75,.15) node { $\mathfrak{r}_1(t)$}; 
      \draw (6,.8) node { $\eta(x_0,t)$}; 
        \draw (6.,.5) node { $\newmoon$}; 
         \draw (5.45,-.7) node { $\eta({z}(t),t)$}; 
        \draw (6.,-.75) node { $\newmoon$}; 
          \draw (6.7,0.1) node { $\eta(x_1,t)$}; 
        \draw (6.7,-.2) node { $\newmoon$}; 
      
        \draw (5.4,1.4) node { $\Gamma_0(t)$}; 
      \draw (7.3,-1.1) node { $\Gamma_1(t)$};
  \end{tikzpicture} 
\caption{Left: $\eta_2(x_0,t) \le\eta_2(x_1,t) $. \hspace{1 in}  Right: $\eta_2(x_0,t) >\eta_2(x_1,t) $.}\label{fig4}
\end{figure}

 \noindent
 {\it Step 1. The geometric set-up.}  Figure \ref{fig4}  shows the geometry of the two approaching curves at some instant of time 
 $t \in[t_0( \epsilon ),T)$: the left side of the figure shows the case that $\eta_2(x_0,t)
\le \eta_2(x_1,t)$ and the right side of the figure shows the case that $\eta_2(x_0,t)>\eta_2(x_1,t)$.\footnote{The actual curves $\Gamma_0(t)$ and $\Gamma_1(t)$ are almost flat near the assumed splash point, but we have made the slopes large to  clearly demonstrate the paths $\mathfrak{r}_1(t)$ and 
$\mathfrak{r}_2(t)$; moreover,  both $\Gamma_0(t)$ and $\Gamma_1(t)$ can have very small oscillations 
 near the contact points and do not have to be parabolas.   On the the other hand, any potential small oscillations along the curves do not  effect the qualitative picture in any way.}  Our idea is to connect $\eta(x_0,t)$ with $\eta(x_1,t)$ using a specially chosen path.  
  
We remind the reader of two facts that we shall make use of:  (1)  for $t\in[t_0(\epsilon),T)$ sufficiently small, the two approaching
 curves $\Gamma_0(t)$ and $\Gamma_1(t)$ are nearly flat, as described in (\ref{eps_def}) and (\ref{eps_def2}); (2) 
there are  two small neighborhoods $\gamma_0(\epsilon)\subset \Gamma$ and $\gamma_1(\epsilon)\subset \Gamma$ that are defined in
(\ref{eps_def}) and (\ref{eps_def2}), respectively. 
 
  We now explain why for $\epsilon>0$ chosen sufficiently small, 
  the vertical projection of $\eta(x_0,t)$ must intersect $\eta(\gamma_1({\epsilon}),t)$
at {\it one unique} point, for any $t\in[t_0(\epsilon),T]$.
Due to Lemma \ref{lemma6.2}, for $\epsilon>0$ small enough, 
 there exists a point $x \in \gamma_1({\epsilon})$ and another point $y \in  \gamma_1({\epsilon})$ such that for all $t\in [t_0( \epsilon ) ,T)$,
$$
\eta_1(x,t) + \frac{ \epsilon }{4 c_7}  \le  \eta_1 (x_1,t) \le \eta_1(y,t) -  \frac{ \epsilon }{4 c_7}\,.
$$
Now, by the fundamental theorem of calculus, 
$$
|\eta(x_1,t)-\eta(x_1,T)|\le \left| \int_t^T v^-(x_1,s)ds\right| \le \mathcal{M} (T-t)\,,
$$
where we have used Lemma \ref{lemma1} to bound $v^-$.
From (\ref{eps_def2}),  $T- t_0(\epsilon) \le \frac{ \epsilon }{ 100 c_7 \mathcal{M} }$; it follows that
$$
|\eta(x_1,t)-\eta(x_1,T)|\le \frac{ \epsilon }{ 100 c_7  } \,.
$$
Similarly, $|\eta(x_0,t)-\eta(x_0,T)|\le \frac{ \epsilon }{ 100 c_7  }$ and using that $\eta(x_0,T)=\eta(x_1,T)$, we see that (by taking
$ \epsilon $ even smaller if necessary)
$$
\textcolor{black}{\eta_1(x,t)\le\eta_1(x,t) + \frac{ \epsilon }{ 5 c_7}  \le  \eta_1 (x_0,t) \le \eta_1(y,t) -  \frac{ \epsilon }{5 c_7}\le \eta_1(y,t)}\,.
$$
\textcolor{black}{By the intermediate value theorem, this shows there exists 
$\eta(z(t),t)\in\eta(\gamma_1({\epsilon}),t)$ such that $\eta_1(z(t),t)=\eta_1(x_0,t)$}, and hence
\begin{equation}\label{cs3-June19}
\eta_1(x,t) + \frac{ \epsilon }{5 c_7}  \le  \eta_1 (z(t),t) \le \eta_1(y,t) -  \frac{ \epsilon }{5 c_7}\,.
\end{equation} 
This proves the existence of a point $\eta(z(t),t)$ in the curve $\Gamma_1(t) := \eta(\gamma_1( \epsilon ),t)$ which has the same horizontal component
as the point $\eta(x_0,t)$ for every $t \in [t_0( \epsilon) ,T)$.

Let us now show that there cannot be a second point in this intersection. 
We proceed by contradiction, and assume the existence of a different point $Z(t) \in \gamma_1( \epsilon )$ such that $Z(t)\ne z(t)$ and
satisfies (\ref{cs3-June19}). Since $\eta_1(z(t),t)= \eta_1(Z(t),t)$, by Rolle's theorem, there exists $c(t)\in\gamma_1({\epsilon})$ such that 
\begin{equation}
\label{145rev5}
 \eta_1 '(c(t),t)= 0\,.
\end{equation} 
Since for any $t<T$, $\det\nabla\eta=1$, we then  have that for $t <T$,  $|\eta'|\ne 0$, and (\ref{145rev5}) provides
\begin{equation}
\label{145rev6}
0=\frac{ \eta_1'(c(t),t)}{|\eta' (c(t),t),t)|}=\tt(\eta(c(t),t),t)\cdot \eo \,.
\end{equation}
Therefore, $\tt(\eta(c(t),t),t)=(\tt(\eta(c(t),t),t)\cdot \et) \, \et$, which with (\ref{eps_def}) provides 
$$1=|\tau(\eta(c(t),t),t) \cdot \et |\le \epsilon $$ 
which is a 
contradiction as $\epsilon<1$.

As shown in Figure \ref{fig4}, 
we define $\mathfrak{r}_1(t)$ to be the vertical line segment connecting $\eta(x_0,t) \in \Gamma_0(t)$ to $\Gamma_1(t)$.   Let us now
explain why the path $\mathfrak{r}_1(t)$  can always be assumed to  be contained in the closure of $\Omega^-(t)$.

We assume that the  path $\mathfrak{r}_1(t)$ is not contained in the closure of $\Omega^-(t)$. 
Then, since $\Omega^+(t)$ is an open and connected set, $\Omega^+(t)\cap\mathfrak{r}_1(t)$ is a union of segments 
$\mathcal{S} _i:=] \x_i(t),\y_{i}(t)[$, with $\x_i(t) > \y_i(t)$ for each $i$, and each segment $ \mathcal{S} _i$ lies strictly above
the next segment $S_{i+1}$.

We now show that there can only be a finite number of such segments $ \mathcal{S}_i$.
Let $ \mathcal{S} _i$ and $ \mathcal{S} _{i+1}$ be two such consecutive segments.    Let $ \mathfrak{c}_i (t) \subset \Gamma(t)$
denote the portion of $\Gamma(t) $ connecting the point $\y_i \in \mathcal{S} _i$ to $\x_{i+1} \in \mathcal{S} _{i+1}$.
   We denote the open set $L_i(t)\subset \Omega ^-(t)$ as the set enclosed by the curve 
$\mathfrak{c} _i(t)$ and the vertical segment $ ]\y_i(t), \x_{i+1}(t)[$, as shown in Figure \ref{fig5cs}.   The set $L_i(t)$ is either to the 
left or to the right of the vertical path $\mathfrak{r} _1(t)$.

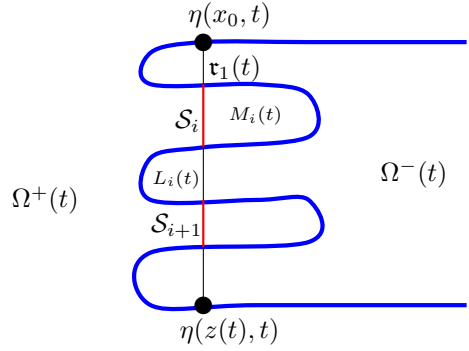
\begin{figure}[h]
\begin{tikzpicture}[scale=0.7]
    \draw (1,2) node { $\Omega^+(t)$}; 
    \draw (3.5,2.4) node {  $_{ L _i(t) }$}; 
    \draw (5,3.6) node {  $_{ M _i(t) }$}; 
    \draw (8,2.5) node { $\Omega^-(t)$}; 
    \draw[color=blue,ultra thick] plot[smooth,tension=.6] coordinates{(9,5) (6,5.) (4,5) (3,4.8) (3,4.2) (5, 4.2) (6,4)  (6,3.2)  (4,3)  (3,2.8) (3,2) (5,2) (6,2) (6,1.2) (3,1) (3,0) (5, 0)  (9,0) 
     };
     \draw (4.5,5.5) node { $\eta(x_0,t)$}; 
     \draw (4,5) node { $\newmoon$}; 
     \draw (4,0) node { $\newmoon$}; 
      \draw (4.5,-0.5) node { $\eta(z(t),t)$};

        \draw (4,5) -- (4,0);
        \draw[color=red,thick] (4,4.2) -- (4,3);
        \draw[color=red,thick] (4,2) -- (4,1.1);
         \draw (4.6,4.5) node { $\mathfrak{r}_1(t)$}; 
       \draw (3.7,3.5) node { $\mathcal{S} _i$}; 
        \draw (3.5,1.5) node { $\mathcal{S} _{i+1}$}; 
\end{tikzpicture} 
\caption{If we suppose that the vertical line segment $\mathfrak{r}(t)$, connecting $\eta(x_0,t)$ to $\eta( z(t),t)$ is not contained in
$\overline{\Omega^-(t)}$, then $\Omega^+(t)\cap\mathfrak{r}_1(t)$ consists of the union of finitely many open intervals $ \mathcal{S} _i$ (shown in red).}  \label{fig5cs}
\end{figure}

Below, we shall prove that the slope of the tangent vector to $\Gamma(t)$ at the points $\x_i(t)$ and $\y_i(t)$ cannot be too large;
specifically,  we will show that
\begin{equation}
\label{061215.1}
|\tt(\x_i,t)\cdot \eo|\ge \frac{1}{\sqrt{2}} \ \text{ and } \ 
|\tt(\y_i,t)\cdot \eo|\ge \frac{1}{\sqrt{2}}
\,.
\end{equation}

We now assume that  (\ref{061215.1}) holds, and 
as shown in Figure \ref{fig5cs},
 we assume that $L_i(t)$ is to the left of $ \mathfrak{r}_1 (t)$.  Let $\theta:[0,1] \to \Gamma$ denote a $W^{4, \infty }$-class
 parameterization of the reference interface $\Gamma$.
Let $\pp_i(t) \in \mathfrak{c} _i(t)$ denote the left-most extreme point on $\partial L_i(t)$; then,
 there exists $ \alpha (t) \in [0,1]$ such that  
 $\pp_i(t) =\eta(\theta(\alpha(t)),t)$,  and  $\nn(\pp_i(t),t)=-\eo$. 
 Let $\beta(t) \in [0,1]$ be such that $\eta(\theta(\beta(t)),t)=\y_i$.

Using Lemma \ref{lemma_6.1} and the lower-bound (\ref{061215.1}),
\begin{equation}
\label{052115.1}
\frac{1}{\sqrt{2}}\le\bigl | \tt(\eta(\theta(\alpha(t)),t),t)-\tt(\eta(\theta(\beta(t)),t),t)\bigr |\le \mathcal{M}\times \text{length of}\ \mathfrak{c} _i(t)\,.
\end{equation}
Since each loop $\mathfrak{c} _i(t)$ is of length greater than $\frac{1}{\sqrt{2}\mathcal{M}}$ and $ \mathfrak{c} _i$ is disjoint from
$ \mathfrak{c} _j$ for $i\neq j$, the fact that
 $\Gamma(t)$ is of finite length,  by (\ref{conservationlaw}), implies that the number of such loops $ \mathfrak{c} _i(t)$ is bounded; hence,
  the intersection of  $\mathfrak{r}_1(t)$  with $\Omega^+(t)$ consists  of a finite number of segments $ \mathcal{S}_i$.

Having established that this generic loop $ \mathfrak{c} _i(t)$ (shown to the left of the vertical path $ \mathfrak{r} _1(t)$ in
Figure \ref{fig5cs}) is of length greater than $\frac{1}{\sqrt{2}\mathcal{M}}$, we now turn our attention to the study of the
subset 
$M_i(t) \subset \Omega^+(t)$ which is directly to the right of $ \mathcal{S} _i$; that is, $M_i(t)$ is the open set whose boundary
consists of that portion of $\Gamma(t)$ connecting $\x_i(t)$ with $\y_i(t)$, which we call $ \mathfrak{b}  _i(t)$,  and $ \mathcal{S} _i$.


Next, let $\q_i(t) \in \mathfrak{b}  _i(t)$ denote the  right-most extreme point of $M_i(t)$;  then, similarly as for the case of $\mathfrak{c}_i(t)$,  we find that the length of $\mathfrak{b} _i(t)$ is greater than 
$\frac{1}{\sqrt{2}\mathcal{M}}$.

We now explain why the projection of the  set $M_i(t)\cup L_i(t)$ onto the horizontal axis spanned by $\eo$ has a vastly larger length
than $T-t$. 
In the same way as we obtained the inequality (\ref{052115.1}),
we have that for any $x=\eta(\theta(\kappa(t)),t)\in \Gamma(t)$ that
\begin{equation*}
\bigl | \tt(\eta(\theta(\beta(t)),t),t)-\tt(\eta(\theta(\kappa(t)),t),t)\bigr |\le \mathcal{M}\times d_{\Gamma(t)}(x,\y_i(t))\,.
\end{equation*}
Therefore, with  $| \Gamma (t)|$ denoting the length of $\Gamma(t)$,
 for any 
$x\in\Gamma(t)$ such that $d_{\Gamma(t)}(x,\y_i(t))\le \min( {\frac{1}{2}} |\Gamma(t)| , \frac{1}{2\sqrt{2}\mathcal{M}})$, we have that 
\begin{equation*}
\bigl |\tt(\eta(\theta(\kappa(t)),t),t)\cdot \eo \bigr |\ge \frac{1}{2\sqrt{2}}\,.
\end{equation*}
We can assume that
\begin{equation}
\label{061215.2}
\tt(\eta(\theta(\kappa(t)),t),t)\cdot \eo \ge \frac{1}{2\sqrt{2}}\,,
\end{equation} 
for the case with the opposite sign  can be treated in a similar fashion (as what follows).

Then, for any $\kappa(t)>\beta(t)$ such that $x=\eta(\theta(\kappa(t)),t)$ satisfies 
$$d_{\Gamma(t)}(x,\y_i(t))=\min \left( {\frac{1}{2}} | \Gamma(t)|, \frac{1}{2\sqrt{2}\mathcal{M}}\right)\,,$$
 we have, by Lemma \ref{lemma_6.1}
and  the inequality (\ref{061215.2}), that
\begin{align} 
(x-\y_i(t))\cdot  \eo \ge \frac{1}{2\sqrt{2}} d_{\Gamma(t)}(x,\y_i) 
=\frac{1}{2\sqrt{2}} \min\left( {\frac{1}{2}} |\Gamma(t)|, \frac{1}{2\sqrt{2}\mathcal{M}}\right)>0\,,
\label{061215.3}
\end{align} 
which shows that $\mathfrak{b}_i(t)$ extends to the right of $\mathfrak{r}_1(t)$ by a distance of  at least 
$$\frac{1}{2\sqrt{2}} \min\left({\frac{1}{2}} | \Gamma(t)|, \frac{1}{2\sqrt{2}\mathcal{M}}\right)>0$$
in the $\eo$-direction.
Using the identical argument, we can prove that $\mathfrak{c}_i(t)$ extends to the left of $\mathfrak{r}_1(t)$ by  a distance of 
at least $\frac{1}{2\sqrt{2}} \min({\frac{1}{2}} |\Gamma(t)|, \frac{1}{2\sqrt{2}\mathcal{M}})>0$ in the $-\eo$-direction.

We now prove the inequalities in (\ref{061215.1}).   We shall consider the tangent vector $\tt$ at $ \y_i(t)$, as the proof for $\tt$ at
 $\x_i(t)$ is
identical.  For the sake of contradiction, we assume that
\begin{equation}\label{cs_contradiction1}
|\tt(\y_i(t),t)\cdot \eo| <  \frac{1}{\sqrt{2}}\,,
\end{equation} 
so that
$$
|\tt(\y_i(t),t)\cdot \et |\ge    \frac{1}{\sqrt{2}}\,.
$$
We choose a point $x \in \mathfrak{b} _i(t)$ which is either to the left or to the right of $\y_i(t)$ such that 
\begin{equation}\label{061715.0}
d_{\Gamma(t)}(x,\y_i(t)) \textcolor{black}{=} \min \left( \frac{1}{3}|\Gamma(t)|,  \frac{1}{2\sqrt{2}\mathcal{M}}\right)\,.
\end{equation} 
In the same way as we obtained  (\ref{061215.3}), we see that if we choose $x$ to be on the correct side of $\y_i(t)$ (depending on the sign of 
$\tt(\y_i(t),t)\cdot \et$), we have that
\begin{equation}
\label{062315.0}
(x-\y_i(t))\cdot  \et \ge \frac{1}{2\sqrt{2}}\ d_{\Gamma(t)}(x,\y_i(t))\,,
\end{equation}
as well as
$$
|(x-\y_i(t))\cdot  \eo| \le
\frac{3}{2\sqrt{2}}\ d_{\Gamma(t)}(x,\y_i(t))\,,
$$ 
so that $x$ is in the cone with vertex $\y_i(t)$ given by
\begin{equation}
\label{061715.1}
(x-\y_i(t))\cdot  \et \ge \frac{1}{3}\ |(x-\y_i(t))\cdot \eo|\,.
\end{equation} 
Furthermore, using (\ref{062315.0}), we have that
\begin{equation}
\label{061715.2}
(x-\y_i(t))\cdot  \et \ge\frac{1}{2\sqrt{2}  }\min \left( \frac{|\Gamma(t)|}{3},  \frac{1}{2\sqrt{2}\mathcal{M}}\right)\,.
\end{equation} 
Therefore, we have just established the existence of  $\tilde{\mathfrak{c}} _i(t)\subset \Gamma(t)$, such that
$ \tilde{ \mathfrak{c}} _i(t)$ is the shortest curve which connects $\y_i(t)$ to $x$ and satisfies
$$
 \text{length of } \tilde{ \mathfrak{c}} _i(t) =  \min \left(\frac{|\Gamma(t)|}{3},  \frac{1}{2\sqrt{2}\mathcal{M}}\right)
$$
which is bounded from below by a positive constant  as $t \to T$.
Moreover, the curve $ \tilde{ \mathfrak{c} }_i(t)$ is contained in the cone defined in
 (\ref{061715.1}), whose vertex $\y_i(t)$ satisfies
 $$
 |\eta(x_0,t) - \textcolor{black}{\y_i(t)}| <  |\eta(x_0,t) - \eta(z(t),t)|\,,
 $$
 the right-hand side tending to zero as $t\rightarrow T$, \textcolor{black}{since as $t \to T$,  $\eta(z(t),t)\rightarrow \eta(x_1,T)$ which implies that $\lim_{t\rightarrow T} |\eta(x_0,t)-\eta(z(t),t)|=0$.
 To sum up, $\tilde{\mathfrak{c}}_i(t)$ is a curve of length of order $1$, of positive vertical extension above $\y_i(t)$ of order $1$, and is contained in the cone (\ref{061715.1}) with vertex $\y_i(t)$ which is below $\eta(x_0,t)$ (and the distance between these two points converges to zero as $t\rightarrow T$).
 }

On the other hand, since $\tt(\eta(x_0,T),T)\cdot \et=0$, we have in the same manner that for $T-t$ sufficiently small,
there exists a curve $\tilde\Gamma_0(t) \subset \Gamma(t)$ containing the point $\eta(x_0,t)$, and 
of length $\min \left( \frac{1}{2} |\Gamma(t)|,  \frac{1}{200\mathcal{M}}\right)$ \textcolor{black}{such that}  the curve
 $\tilde\Gamma_0(t)$ is contained in the two cones (\textcolor{black}{that are almost horizontal from the definition below}) defined by
\begin{equation}
\label{061715.3}
|(x-\eta(x_0,t))\cdot\et|\le\frac{1}{100} |(x-\eta(x_0,t))\cdot\eo|\,;
\end{equation}
additionally, 
the curve  $\tilde\Gamma_0(t)$ extends in the $\pm\eo$ direction a distance of at least 
$\frac{1}{2} \min \left( \frac{1}{2} |\Gamma(t)|,  \frac{1}{200\mathcal{M}}\right)$ on each side of $\eta(x_0,t)$.
It is then elementary to show that the cone given by (\ref{061715.1}) intersects each of the four lines enclosing the cone 
(\ref{061715.3}) at a distance which less than $\frac{1}{2} \min \left( \frac{1}{2} |\Gamma(t)|,  \frac{1}{200\mathcal{M}}\right)$ on 
each side of $\eta(x_0,t)$. Therefore, the cone given by (\ref{061715.1}) intersects $\tilde\Gamma_0(t)$. 
The same is true for the curve $\tilde{ \mathfrak{c} }_i(t)$, as its starting point $\y_i(t)$ lies below  $\tilde\Gamma_0(t)$ while, 
due to (\ref{061715.2}),
its ending point $x$ lies above $\tilde\Gamma_0(t)$ for $t$ sufficiently close to $T$, and stays in the cone given by (\ref{061715.1}). Furthermore,
this self-intersection occurs with different tangent vectors, since thanks to Lemma \ref{lemma_6.1},  any point $z$ on $\tilde \Gamma_0(t)$ satisfies (for $t$ close enough to $T$)
$$|\tt(z,t)\cdot \et|\le \frac{1}{100}\,,$$ while any point $z$ on $\tilde{\mathfrak{c}}_i(t)$ will satisfy thanks to Lemma \ref{lemma_6.1} that
$$|\tt(z,t)\cdot \et|\ge \frac{1}{2\sqrt{2}}\,.$$

As $\Gamma(t)$ cannot self-intersect for $t<T$ (particularily not with different tangent vectors), this then leads to a contradiction  
of (\ref{cs_contradiction1}), and hence proves (\ref{061215.1}).

Let $\bar \gamma \subset \Gamma$ be the preimage of $\eta$ of  the loops
 $\mathfrak{b}  _i( \cdot , t_0( \epsilon ) )\cap  \mathfrak{c}  _i( \cdot , t_0( \epsilon ) )$.
It follows that for  all $t \in (t_0( \epsilon ) , T)$,  $\eta( \bar \gamma, t)$ must continue to intersect the vertical
path $ \mathfrak{r} _1(t)$ at the finite set of points $\x_i(t)$ \textcolor{black}{ and $\y_i(t)$}. 
 Since $\eta_2(x_0,t)>\x_i(t)>\eta_2(x_1,t)$ (the same being true for $\y_i(t)$), we still have by continuity that $\eta_2(x_0,t')>\x_i(t')>\eta_2(x_1,t')$ (the same being true for $\y_i(t')$) for $t'\in [t,T)$,  as the case $\eta_2(x_0,t')=\x_i(t')$ or $\eta_2(x_1,t')=\x_i(t')$ 
 correspond to a self-intersection of $\Gamma(t')$ at time $t'<T$, which is excluded from our definition of $T$. 
 
 This ensures that the already established finite number $ \mathcal{N} $ of loops $\mathfrak{b}_i(t)$ and $\mathfrak{c} _i(t)$ stays constant for 
 $t\in [t_0(\epsilon),T]$ for $T-t_0(\epsilon)>0$ small enough.  We then have the existence of a finite number of points $x_0, x_1, x_2,..., x_n$ in $\Gamma$ such that 
 $$\eta(x_0,T)=\eta(x_1,T)= \cdot \cdot\cdot =\eta(x_n,T)$$
 and such that $\eta(x_i,t)$ for $i\in [2,\mathcal{N} ]$ belongs to the image by the flow of the same loop (of length of at least $\frac{1}{\sqrt{2}} \min(\frac{1}{2} |\Gamma(t)|, \frac{1}{2\sqrt{2}\mathcal{M}})$ on each side of a corresponding point of intersection of $\mathfrak{r}_1(t)$ and $\Gamma(t)$) for all $t\in [t_0(\epsilon),T]$).
 We can then, if necessary, replace  the point $x_1$ by an appropriate $x_i \in \Gamma$ (with $\eta(x_i,t)$ such that $\eta(x_0,t)$ and $\eta(x_i,t)$ are on the same loop for all $t\in [t-t_0(\epsilon), T]$). Therefore, the vertical path $\mathfrak{r}_1(t)$, connecting $\eta(x_0,t)$ to $\Gamma_1(t)$, is contained in 
 $\Omega^-(t)$ for $T-t$ sufficiently small. In what follows, we assume that this substitution has been made so that $x_1 \in \Gamma$
 is the point which is assumed to flow into self-intersection from below (by renaming $x_0$ and $x_1$ if necessary).



 We can therefore define the unique point $z(t) \in \Gamma$ such that $\eta(z(t),t)$ is the vertical projection of $\eta(x_0,t)$ onto the curve $\mathfrak{r}_1(t)$ (as shown in
 Figure \ref{fig4}).
Specifically,  we define $\mathfrak{r}_1(t)$ to be the vertical line segment connecting $\eta(x_0,t) \in \Gamma_0(t)$ to $\eta(z(t),t) \in \Gamma_1(t)$ (which is contained in $\overline{\Omega^-}(t)$ as we just have shown), and we
define $\mathfrak{r}_2(t)$ to be the portion of \textcolor{black}{$\Gamma_1(t)$} connecting  $\eta(z(t),t)$ to $\eta(x_1,t)$.

\vspace{.2 in}

We will rely on the following two claims:   

\vspace{.05 in}

\noindent
{\it Claim 1.}  For $t \in [t_0( \epsilon ), T)$, $\eta_2(x_1,t) - \eta_2(z(t),t) =  b(t) \dn_1(t)\,\left[ (T-t) + | \dn_1(t)| \right]$ for a bounded function $b(t)$.
 \begin{proof} 
 Near the point $\eta(x_1,t)$, we consider $\mathfrak{r} _2(t)$ as a graph $(X, h(X,t))$ (see Figure \ref{fig4}), 
 such that $h(0,t) = \eta_2(x_1,t)$ with  tangent
 vector $(1, h'(X,t))$, which  at $X=0$ must be close to horizontal, since $h'(0,T)=0$.
 Since $h$ is a $C^2$ function, we can write the Taylor series for $h(X,t)$ about $X=0$ as
 \begin{equation}
 \label{052615.0}
 h(X,t) = h(0,t) + h'( 0,t ) X +\frac{1}{2} h''(\xi,t) X^2 \text{ for some }  \xi \in (0, X) \,.
 \end{equation}
 Next,
 \begin{align}
 |h'( 0 ,t) |& =\left| h'( 0 ,T) + \int_T^t  h'_t( 0 ,s)ds \right|  \nonumber\\
 &= \left| \int_T^t  {v^-_2}'( x_1, s) ds \right| 
 \lesssim \mathcal{M}  (T-t) \,,\label{052615.1}
\end{align} 
the inequality following from the bound on $v^-$ given by Lemma \ref{lemma1}. On the other hand,
\textcolor{black}{ 
\begin{align}
|h''(\xi,t)|& = |H(\xi,h(\xi,t))\ (1+h'^2(\xi,t))^ {\frac{3}{2}} |\nonumber\\
&=\left|H(\xi,h(\xi,t))\ \left(1+\left[\frac{\tt(\xi,h(\xi,t))\cdot \et}{\tt(\xi,h(\xi,t))\cdot \eo}\right]^2\right)^ {\frac{3}{2}}  \right|\nonumber\\
&\le |H(\xi,h(\xi,t)) | \left( 1 + \frac{ \epsilon ^2}{ (1- \epsilon )^2}  \right)^ {\frac{3}{2}} 
\lesssim \mathcal{M}\,,\label{052615.2}
\end{align}
}
where we have used (\ref{eps_def2}) for the first inequality and (\ref{bounds}) for the second.   From (\ref{052615.0}), (\ref{052615.1}) and (\ref{052615.2}), we then have that
\begin{equation}
 \label{052615.3}
 |h(X,t) -h(0,t)|\le C \mathcal{M} |X| (T-t+|X|)\,,
 \end{equation}
 for some constant $C>0$.
 
Next, we notice that $\eta_2(z(t),t) = h( \dn_1(t),t)$; hence, we set $X =  \dn_1(t)$.   By setting $b(t) = C \mathcal{M} \vartheta(t)$ with  $ \vartheta(t) \in (0,1)$, the proof is complete.
 \end{proof} 
 
 \noindent
{\it Claim 2.} $|\dn_1(t) |  \lesssim  \mathcal{M} (T-t) < \epsilon $ for $t \in [t_0( \epsilon ),T)$.
 \begin{proof} 
 By the fundamental theorem of calculus, $|\dn_1(t) | \le \int_T^t | \delta v(s)|  ds \lesssim  \mathcal{M} (T-t)$ by Lemma \ref{lemma1}.   Then, we choose
 $T-t_0( \epsilon )$ sufficiently small.
 \end{proof}

 \vspace{.1 in}
 \noindent
{\it Step 2. The case that $ \eta_2(x_0,t) > \eta_2(x_1,t)$.}  We will first consider the geometry displayed on the right side of Figure \ref{fig4}.
With $\vr_1(t)$ and $\vr_2(t)$ denoting unit-speed parameterizations for $\mathfrak{r}_1(t)$ and $\mathfrak{r}_2(t)$, 
\begin{align}
\umo(\eta(x_0,t),t)-\umo(\eta(x_1,t),t)
&=\left[\umo(\eta(x_0,t),t)-\umo(\eta(\oz (t),t),t)\right]  \n  \\
&\qquad \qquad
+\left[\umo(\eta(\oz (t),t),t)-\umo(\eta(x_1,t),t)\right]\n\\
&=\int_{\mathfrak{r}_1(t)}  \nabla \umo \cdot  d\vr_1+\int_{\mathfrak{r}_2(t)}  \nabla \umo \cdot  d\vr_2 \n \\
&=\int_{\mathfrak{r}_1(t)}  \frac{\partial u^-_2}{\partial x_1} \, dx_2 +  \int_{\mathfrak{r}_2(t)} \nabla _\ttt u^-_1 \, ds\,, \n
\end{align} 
where we have used the fact that $\frac{\partial u^-_1}{\partial x_2} = \frac{\partial u^-_2}{\partial x_1}$ in the last equality, as $  \operatorname{curl} \um=0$.
  We will evaluate these two
integrals using the mean value 
theorem for integrals, together with our estimate  (\ref{improved}) for $ \nabla _\ttt \um \cdot \tt$, and hence for $ \frac{\partial u^-_1}{ \p x_1}$ (which is equivalent to $ \nabla _\ttt u^-$ for $T-t_0( \epsilon )$ 
sufficiently small, as the ratio of the two quantities is close to $1$), and estimate (\ref{u12epsilon}) for  $\frac{\p u^-_2}{\p x_1}$.  In particular, 
\begin{align} 
&\umo(\eta(x_0,t),t)-\umo(\eta(x_1,t),t)\n \\
&\qquad
=\frac{\varepsilon_1(t)}{T-t} \left( \eta_2(x_0,t) - \eta_2(z(t),t) \right)- \varrho(t)\frac{\alpha _1(t)}{T-t} \dn_1(t) -  \nu(t) \frac{\alpha_1(t)}{T-t}  \left( \eta_2(x_1,t) - \eta_2(z(t),t) \right)\,,\n\\
&\qquad
=\frac{\varepsilon_1(t)}{T-t} \dn_2(t) + \frac{\varepsilon_1(t)}{T-t} \left(\eta_2(x_1,t) - \eta_2(z(t),t) \right) - \varrho(t)\frac{\alpha _1(t)}{T-t} \dn_1(t)  \n \\
&\qquad\qquad \qquad\qquad  \qquad\qquad  -  \nu (t)\frac{\alpha_1(t)}{T-t}  \left( \eta_2(x_1,t) - \eta_2(z(t),t) \right),
\label{j1cs}
\end{align}
where $\varepsilon_1(t)\in [-\epsilon,\epsilon]$, and  where we choose $\alpha_1(t) \in [-\epsilon,1+ c_9(T-t)]$, where  $0< \epsilon \ll 1$ is defined in 
Step 4 of the proof of Theorem \ref{thm_blowup}.   The functions $\varrho(t)$ and $\nu(t)$ satisfy $| 1-\varrho(t)|\ll 1$ and $0\le \nu(t)\ll 1$; this follows
since $\mathfrak{r}_2(t)$ is nearly flat near $\eta(x_0,t)$, so the vertical distance $|\eta_2(x_1,t) - \eta_2(z(t),t)|$ is nearly zero, while the horizontal
distance $|\eta_1(x_1,t) - \eta_1(z(t),t)|$ is nearly  the total distance $|\eta(x_1,t) - \eta(z(t),t)|$.

The negative sign in front of $ \alpha _1(t)$ is determined by the limiting behavior of $\frac{\partial u^-_1}{\partial x_1}$ given by (\ref{ns19}).  
From Claim 1 above, we then see that
\begin{align} 
&\umo(\eta(x_0,t),t)-\umo(\eta(x_1,t),t)\n \\
&\qquad
=\frac{\varepsilon_1(t)}{T-t} \dn_2(t) + \frac{b(t)(|\dn_1(t)| + \delta t) \varepsilon_1(t)}{T-t} \dn_1(t) -\frac{ \varrho\alpha _1(t)}{T-t} \dn_1(t) \n \\
& \qquad\qquad  \qquad\qquad 
-  
 \frac{\nu b(t) (|\dn_1(t)| + \delta t)\alpha_1(t)}{T-t} \dn_1(t) \,,\n
\end{align}
where $  \delta t = T-t$.
We set 
$$\beta_1(t) =  \left[\varrho(t) + \nu(t) b(t)(|\dn_1(t)| + \delta t)\right] \alpha _1(t)-b(t)(|\dn_1(t)| + \delta t)\varepsilon_1(t).$$
 Then, with Claim 2, we see that
$\beta_1(t) \in [-2 \epsilon , 1+ 2c_9(T-t) ]$, and that 
\begin{align} 
\umo(\eta(x_0,t),t)-\umo(\eta(x_1,t),t)
=- \frac{\beta_1(t)}{T-t} \dn_1(t)+\frac{\varepsilon_1(t)}{T-t} \dn_2(t)  \label{j1} \,.
\end{align}
  
Similarly, for $u_2^-$,  we have that
 \begin{align}
\umt(\eta&(x_0,t),t)-\umt(\eta(x_1,t),t)
=\left[\umt(\eta(x_0,t),t)-\umt(\eta(\oz (t),t),t)\right]  \n  \\
&\qquad \qquad\qquad \qquad\qquad \qquad\qquad \qquad
+\left[\umt(\eta(\oz (t),t),t)-\umt(\eta(x_1,t),t)\right]\n\\
&=\int_{\mathfrak{r}_1(t)}  \nabla \umt \cdot  d\vr_1+\int_{\mathfrak{r}_2(t)}  \nabla \umt \cdot  d\vr_2 \n \\
&=\int_{\mathfrak{r}_1(t)}  \frac{\partial u^-_2}{\partial x_2} \, dx_2 +  \int_{\mathfrak{r}_2(t)} \nabla _\ttt u^-_2 \, ds\,, \n\\
&=\frac{\alpha_2(t)}{T-t} \left( \eta_2(x_0,t) - \eta_2(z(t),t) \right)+ \varrho(t)\frac{\varepsilon _2(t)}{T-t} \dn_1(t) +  \nu(t) \frac{ \alpha  _2(t)}{T-t}  \left( \eta_2(x_1,t) - \eta_2(z(t),t) \right)\,, \n\\
&=\frac{\alpha_2(t)}{T-t}\dn_2(t)+ \frac{b(t)(|\dn_1(t)| + \delta t)\alpha_2(t)}{T-t} \dn_1(t) \n \\
& \qquad \qquad + \frac{\varrho(t)\varepsilon _2(t)}{T-t} \dn_1(t) +   \frac{\nu(t)b(t) (|\dn_1(t)| + \delta t)\alpha  _2(t)}{T-t} \dn_1(t) \,, 
\n
\end{align}
 with $\varepsilon_2(t)\in [-\epsilon,\epsilon]$ and $\alpha_2(t)\in [-\epsilon,1+c_9(T-t)]$, and where $0\le 1-\varrho(t)\ll 1$ and $0\le \nu(t)\ll 1$.   Setting
\begin{equation}\label{zzt400}
 \mathcal{E}_2(t) =  ( b(t) + \nu(t)b(t))(|\dn_1(t)| + \delta t)\alpha  _2(t)+   \varrho(t) \varepsilon_2(t)
\end{equation}
 we see that by Claim 2, 
  \begin{equation}
\label{718.-1}
\mathcal{E}_2(t) \in [-2 \epsilon , 2 \epsilon ]  \,,
\end{equation}
and
 \begin{equation} 
 \umt(\eta(x_0,t),t)-\umt(\eta(x_1,t),t)
 =\frac{ \mathcal{E} _2(t) }{T-t}\dn_1(t) +\frac{ \alpha _2(t) }{T-t}\dn_2(t) \,. \label{j2}
 \end{equation} 
 Equations (\ref{deltaeta}),  (\ref{j1}) and (\ref{j2}), then give the desired relation (\ref{c0}).

  \vspace{.15 in}
 \noindent
{\it Step 3. The case that $ \eta_2(x_0,t) \le \eta_2(x_1,t)$.} We next consider the geometry displayed on the left side of Figure \ref{fig4}.
Again, using $\vr_1(t)$ and $\vr_2(t)$ to denote unit-speed parameterisations for $\mathfrak{r}_1(t)$ and $\mathfrak{r}_2(t)$, we see that once again
\begin{align}
\umo(\eta(x_0,t),t)-\umo(\eta(x_1,t),t)
&=\left[\umo(\eta(x_0,t),t)-\umo(\eta(\uz (t),t),t)\right]  \n  \\
&\qquad \qquad
+\left[\umo(\eta(\uz (t),t),t)-\umo(\eta(x_1,t),t)\right]\n\\
&=\int_{\mathfrak{r}_1(t)}  \frac{\partial u^-_2}{\partial x_1} \, dx_2 +  \int_{\mathfrak{r}_2(t)} \nabla _\ttt u^-_1 \, ds\,, \n
\end{align} 
where $s$ denotes arc length. 
   We again evaluate these two
integrals using the mean value 
theorem for integrals:
\begin{align} 
&\umo(\eta(x_0,t),t)-\umo(\eta(x_1,t),t)\n \\
&\ \
=
\frac{\varepsilon_1(t)}{T-t} \left( \eta_2(x_0,t) - \eta_2(z(t),t) \right)-\frac{ \varrho(t)\alpha _1(t)}{T-t} \dn_1(t) -   \frac{\nu(t)\alpha_1(t)}{T-t}  \left( \eta_2(x_1,t) - \eta_2(z(t),t) \right)   ,   \n
\end{align}
where once again $\alpha_1(t) \in [-\epsilon,1+ c_9(T-t)]$ and $\varepsilon_1(t) \in [- \epsilon , \epsilon ]$.
For some $\theta(t)\in (0,1]$, 
$$\left| \eta_2(x_0,t) - \eta_2(z(t),t) \right| = \theta(t)  \left| \eta_2(x_1,t) - \eta_2(z(t),t) \right|\,. $$  
Hence, by Claim 1,
\begin{align} 
&\umo(\eta(x_0,t),t)-\umo(\eta(x_1,t),t)\n \\
&\ \
=
\frac{\theta(t) b(t) (|\dn_1(t)| + \delta t) \varepsilon_1(t)}{T-t}\dn_1(t) -\frac{ \varrho(t)\alpha _1(t)}{T-t} \dn_1(t) -   \frac{b(t)(|\dn_1(t)| + \delta t)\nu(t)\alpha_1(t)}{T-t}  \dn_1(t)   .\n
\end{align}
With
$$
\beta_1(t) = [ \varrho(t) + b(t)(|\dn_1(t)| + \delta t)\nu(t)] \alpha _1(t) - \theta(t) b(t) (|\dn_1(t)| + \delta t ) \varepsilon_1(t)\,,
$$
then $\beta_1(t) \in [ -2 \epsilon , 1+2c_9(T-t) ]$ and
$$
\umo(\eta(x_0,t),t)-\umo(\eta(x_1,t),t) = -\frac{ \beta _1(t)}{T-t} \dn_1(t) \,.
$$

Similarly, for $u_2^-$, we have that
 \begin{align}
&\umt(\eta(x_0,t),t)-\umt(\eta(x_1,t),t) \n \\
& \qquad =\left[\umt(\eta(x_0,t),t)-\umt(\eta(\oz (t),t),t)\right]   +\left[\umt(\eta(\oz (t),t),t)-\umt(\eta(x_1,t),t)\right]\n\\
&\qquad  =\frac{\alpha_2(t)}{T-t} \left( \eta_2(x_0,t) - \eta_2(z(t),t) \right)+ \frac{\varrho(t)\varepsilon _2(t)}{T-t} \dn_1(t) +  \frac{ \nu(t)\alpha _2(t)}{T-t}  \left( \eta_2(x_1,t) - \eta_2(z(t),t) \right)\,, 
\n
\end{align}
with $\varepsilon_2(t)\in [-\epsilon,\epsilon]$ and $ \alpha _2(t) \in [- \epsilon , 1+ c_9(T-t) ]$.   Hence, from Claim 1, we see that
\begin{align*} 
&\umt(\eta(x_0,t),t)-\umt(\eta(x_1,t),t)\\
&\qquad
=\frac{\theta(t)b(t)(|\dn_1(t)| + \delta t)\alpha_2(t)}{T-t} \dn_1(t) + \frac{\varrho(t)\varepsilon _2(t)}{T-t} \dn_1(t) +  \frac{\nu(t) b(t) (|\dn_1(t)| + \delta t) \alpha _2(t)}{T-t}  \dn_1(t)\,.
\end{align*} 
Setting
$$
\mathcal{E} _2(t) =[ \theta(t) + \nu(t)] b(t)(|\dn_1(t)| + \delta t)\alpha_2(t)+ \varrho(t)\varepsilon _2(t) \,,
$$
we see that by Claim 2, $ \mathcal{E} _2(t) \in [ -2 \epsilon , 2 \epsilon ]$, and
$$
\umt(\eta(x_0,t),t)-\umt(\eta(x_1,t),t) = \frac{\mathcal{E} _2(t) }{T-t} \dn_1(t) \,.
$$
In this case, $ \dn_t = \M \, \dn$ with 
 \begin{equation*}
\M(t)=\frac{1}{T-t} \left[\begin{array}{cc} - \beta_1(t) & 0\\
                                         \mathcal{E} _2(t) &  0
\end{array}
\right]
\,.
\end{equation*}
which is a special case of the matrix given (\ref{c0})  with $ \varepsilon _1(t)=0$ and $ \alpha _2(t)=0$.  This completes the proof.
\end{proof}

\section{Proof of the Main Theorem} \label{sec_proof}

We now give a proof of Theorem \ref{thm1}.
 We assume that either a splash or splat singularity does indeed occur, and then show
that this leads to a contradiction.

We begin the proof with the case that 
a single splash singularity occurs at time $t=T$  and  that  there exist two points
$x_0$ and $x_1$ in $\Gamma$, such that
$\eta(x_0,T)=\eta(x_1,T)$, as we assumed in Section \ref{sec_geometry}.  (In Sections \ref{multiple_splash} and \ref{splat_section}, we will also rule-out the case of multiple simultaneous  splash singularities,  as well as the splat singularity.)

\subsection{A single splash singularity cannot occur in finite time}\label{sec_singlesplash}
As we stated above, 
for $T-t_0$  sufficiently small and in a small neighborhood of $\eta(x_0,T)$,  the interface $\Gamma(t)$, $t \in [t_0,T)$, 
consists of two curves $\Gamma_0(t)$ and $\mathfrak{r}_1(t)$ evolving towards one another, with $\eta(x_0,t) \in \Gamma_0(t)$ and $\eta(x_1,t) \in \mathfrak{r}_1(t)$.   
We consider the two cases that either $ |\nabla \um(\cdot ,t)|$ remains bounded or blows-up as $t \to T$.

\subsubsection{The case that $ |\nabla \um(\eta(x_0,t),t)| \to \infty $ as $t \to T$}

  We prove that both $ \duo(T) \neq 0 $ and $ \duo(T)  =0$, where recall that $\du(t)$ is given by (\ref{aom2}).

\vspace{.05 in}
 
 \noindent
{\it Step 1.  $\duo\ne 0$ at the assumed splash singularity $\eta(x_0,T)$.}

The scalar product of (\ref{c0}) with $ \dn(t)$ yields
\begin{equation}
\label{718.6}
\p_t|\dn|^2=-2 \frac{\beta_1(t)}{T-t} |\dn_1|^2+2 \frac{\varepsilon_1(t)+\mathcal{E}_2(t)}{T-t} \dn_1\ \dn_2 +2 \frac{\alpha_2(t)}{T-t} |\dn_2|^2\,,
\end{equation}
where the constants $\beta_1(t), \alpha _2(t), \varepsilon_1(t), \mathcal{E}_2(t)$ are defined in Theorem \ref{thm_geom2}.
Therefore, since $T-t <  \epsilon \ll 1$, 
\begin{equation*}
\p_t|\dn|^2\ge -\frac{2+C\epsilon}{T-t} |\dn|^2\,,
\end{equation*}
from which we infer that
\begin{equation}
\label{718.7}
|\dn(t)|^2\ge |\dn(0)|^2 \frac{(T-t)^{2+C\epsilon}}{T^{2+C\epsilon}}\,.
\end{equation}

We now assume that 
\begin{equation}
\label{rev9}
\duo(T) =0\,,
\end{equation}
and now proceed to infer a contradiction from this assumption. Since $\dn(T)=0$ (since we have assumed that a splash singularity occurs at
$t=T$), we have that
\begin{align}
\label{rev10}
\dn_1(t)&=\int_T^t  \left(\p_t \eta(x_0,s) - \p_t \eta(x_1,s) \right)ds \nonumber\\
&=\int_T^t  (v^-_1(x_0,s)-v^-_1(x_1,s)) ds \nonumber\\
&=\int_T^t  (v^-_1(x_0,s)-v^+_1(x_0,s)) ds + \int_T^t  (v^+_1(x_0,s)-v^+_1(x_1,s)) ds - \int_T^t  (v^-_1(x_1,s)-v^+_1(x_1,s)) ds\nonumber\\
&= -\int_T^t  \delta v_1(x_0,s) ds  + \int_T^t  (v^+_1(x_0,s)-v^+_1(x_1,s)) ds+ \int_T^t  \delta v_1(x_1,s) ds\nonumber\\
&= -\int_T^t  \delta v\cdot(\eo-\tau)(x_0,s) ds- \int_T^t  \delta v\cdot\tau (x_0,s) ds + \int_T^t  (v^+_1(x_0,s)-v^+_1(x_1,s)) ds\nonumber\\
&\ \ \ + \int_T^t  \delta v\cdot(\eo-\tau)(x_1,s) ds+\int_T^t  \delta v\cdot\tau(x_1,s) ds\nonumber\\
&= -\int_T^t  \delta v\cdot(\eo-\tau)(x_0,s) ds- \int_T^t  \left[\delta v\cdot\tau (x_0,T)+\int_T^s \p_t(\delta v\cdot\tau)(x_0,l)dl \right] ds \nonumber\\
&\ \ \ + \int_T^t  \left[(v^+_1(x_0,T)-v^+_1(x_1,T))+\int_T^s \p_t (v^+_1(x_0,l)-v^+_1(x_1,l)) dl\right] ds\nonumber\\
&\ \ \ + \int_T^t  \delta v\cdot(\eo-\tau)(x_1,s) ds+\int_T^t  \left[\delta v\cdot\tau(x_1,T)+\int_T^s \p_t(\delta v\cdot\tau)(x_1,l) dl\right] ds
\,.
\end{align}
Using the fact that $\tau(x_0,T)=\eo=\tau(x_1,T)$, (\ref{rev10}) then becomes
 \begin{align}
\label{rev11}
\dn_1(t)
&= -\int_T^t  \delta v\cdot(\eo -\tau)(x_0,s) ds- \int_T^t  \int_T^s \p_t (\delta v\cdot\tau)(x_0,l)dl ds \nonumber\\
&+ \int_T^t  \left[-\delta v_1(x_0,T)+v^+_1(x_0,T)-v^+_1(x_1,T)+\delta v_1(x_1,T)+\int_T^s \p_t(v^+_1(x_0,l)-v^+_1(x_1,l)) dl \right] ds\nonumber\\
& + \int_T^t  \delta v\cdot(\eo-\tau)(x_1,s) ds+\int_T^t  \int_T^s \p_t (\delta v\cdot\tau)(x_1,l) dl ds
\,.
\end{align}
Next, since $-\delta v_1(x_0,T)+v^+_1(x_0,T)-v^+_1(x_1,T)+\delta v_1(x_1,T)=\delta u_1^-(T)$, (\ref{rev11}) and the assumption (\ref{rev9}) then
 provide us with
 \begin{align}
\label{rev12}
\dn_1(t)
&= -\int_T^t  \delta v\cdot(\eo -\tau)(x_0,s) ds- \int_T^t  \int_T^s \p_t(\delta v\cdot\tau) (x_0,l)dl ds \nonumber\\
&\ \ \ + \int_T^t \int_T^s  \p_t(v^+_1(x_0,l)-v^+_1(x_1,l)) dl  ds\nonumber\\
&\ \ \ + \int_T^t  \delta v\cdot(\eo -\tau)(x_1,s) ds+\int_T^t  \int_T^s \p_t(\delta v\cdot\tau)(x_1,l) dl ds
\,.
\end{align}
Due to the $L^\infty$ control of $(\delta v\cdot\tau)_t$ provided by (\ref{jump_v}), and by writing $\eo -\tau(x_i,s)=\int_s^T \tau_t (x_i,s)$ (for $i=0,1$), (\ref{rev12}) allows us to conclude that
\begin{equation}
\label{rev13.0}
|\dn_1(t)|\lesssim \mathcal{M} (T-t)^2\,.
\end{equation}
\textcolor{black}{Note here that we used $\tau_t(x,t)=\tt_t(\eta(x,t),t)+\nabla_{\ttt}\tt (\eta(x,t),t) |\eta'(x,t)|$. By noticing that $\tt_t$ can be computed from Remark 3, we then have $|\tau_t(\cdot,t)|_{L^\infty(\Gamma)}\lesssim\mathcal{M}$.}

Therefore, 
$ |\dn_1(t)|^2 \lesssim \mathcal{M} (T-t)^4$, and from 
 (\ref{718.7}), this implies (since $ 0 \le \epsilon \ll 1$) that
$(T-t)^{2+C\epsilon}  \le C  |\dn_2(t)|^2$; 
hence, by choosing  $t_1\in(0,T)$ sufficiently close to $T$,  for any $t\in [t_1,T]$, 
\begin{align} 
|\dn_1(t)| &\lesssim \mathcal{M} (T-t)^2
= \mathcal{M} (T-t)^{1+C \epsilon /2} (T-t)^{1-C \epsilon /2} \nonumber \\
& \lesssim  (T-t)^{1-C \epsilon /2}\ |\dn_2(t)|  \le  \sqrt{T-t}\ |\dn_2(t)|\,.
\label{rev13}
\end{align}

Using (\ref{rev13}) in (\ref{718.6}) and the fact that $0 < T-t < \epsilon \ll 1$,  we then obtain
\begin{equation}
\label{rev14}
\p_t|\dn|^2\ge -[2(2c_9+\frac{1}{\sqrt{T-t}})\textcolor{black}{+\frac{8\epsilon}{\sqrt{T-t}}+\frac{4\epsilon}{T-t}}]|\dn_2|^2\ge -[2(2c_9+\frac{1}{\sqrt{T-t}})\textcolor{black}{+\frac{8\epsilon}{\sqrt{T-t}}+\frac{4\epsilon}{T-t}}]|\dn|^2\,.
\end{equation}
Thus,
\begin{equation*}
\p_t\left(|\dn|^2 e^{\int_{t_1}^t [2(2c_9+\frac{1+4\epsilon}{\sqrt{T-s}})]\ ds} \textcolor{black}{(T-t)^{-4\epsilon}}\right)\ge 0 \,.
\end{equation*}
Hence,
\begin{equation*}
|\dn|^2(t)\ge |\dn|^2(t_1) e^{-\int_{t_1}^T [2(2c_9+\frac{1+4\epsilon}{\sqrt{T-s}})]\ ds} \textcolor{black}{\frac{(T-t)^{4\epsilon}}{(T-t_1)^{4\epsilon}}> C (T-t)^{4\epsilon}}\,,
\end{equation*}
with $C>0$ finite, since $(T-s)^{-\frac{1}{2}}$ is integrable. This is then in contradiction with our assumption of a splash singularity 
occurring at time $t=T$ \textcolor{black}{which implies  that$$|\delta\eta|^2(t)\lesssim\mathcal{M} (T-t)^2;$$}
therefore,
the assumption (\ref{rev9}) was wrong as it lead to a contradiction, leading us to conclude that, in fact, 
\begin{equation}\label{zzt300}
|\duo(T) | >0 \,.
\end{equation}

\vspace{.1in}

\noindent
{\it Step 2.  $\duo= 0$ at the assumed splash singularity $\eta(x_0,T)$.}  
Having shown that $\duo \neq 0$ at the splash singularity, in order to arrive at a contradiction, we shall next prove that we also have $\duo =0$ at the splash singularity.

\begin{figure}[h]
\begin{center}
\begin{tikzpicture}[scale=0.5]
\draw[color=blue,ultra thick] plot[smooth,tension=.6] coordinates{(-6,5)  (-4,2.0)  (-3, 3) (-2,1.2)  (0,3.6)
(1,5)  };
 \draw[ultra thick, red] (-2,0) sin (0,2);    
 \draw[ultra thick, red] (0,2) cos (2,0);    
\draw (-4,2) node { $\newmoon$}; 
\draw (0,2) node { $\newmoon$}; 
\draw (0,3.6) node { $\newmoon$}; 
\draw(0,2) -- (0,3.6);
\draw[->] (4,2.9) -- (0,2.8);
\draw (4.8,3) node { $\mathfrak{r}_1(t)$};
\draw (-1.8,2.35) node { $\mathfrak{r}_2(t)$};

\draw (-6.,2) node { $\eta(x_0,t)$}; 
\draw (1.8,2.2) node { $\eta(x_1,t)$};
\draw (1.7,3.8) node { $\eta(z(t),t)$};
\draw (-6.8,5.) node { $\Gamma_0(t)$};
\draw (3,-0.1) node { $\Gamma_1(t)$};
\end{tikzpicture} 
\end{center}
\caption{  The portion of the interface $\Gamma_0(t)$,  near $\eta(x_0,t)$,  is
shown to have an oscillation that may only disappear in the limit as $t \to T$.}
\label{fig5}
\end{figure}
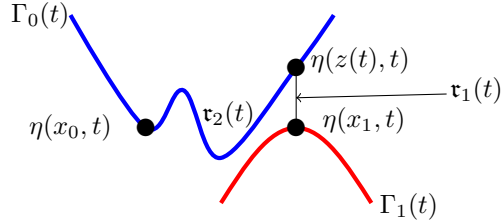

We now define the following two curves. The first curve $\mathfrak{r}_1(t)$ is the vertical segment joining $\eta(x_1,t)\in \mathfrak{r}_1(t)$ to a point $\eta(z(t),t)\in
\Gamma_1(t)$.  This segment is contained in full in the closure of $\Omega^-(t)$ (for $T-t$ sufficiently small), as we have shown 
in Step 1 of the proof of Theorem \ref{thm_geom2}, by simply switching the role of $x_0$ and $x_1$ in the definition of this vertical segment.

The second curve $\mathfrak{r}_2(t)$ is the portion of $\Gamma_0(t)$ linking $\eta(z(t),t)$ to $\eta(x_0,t)$.

We now simply write 
\begin{align}
\duo(t)&=u_1^-(\eta(x_0,t),t)-u_1^-(\eta(z(t),t),t)+u_1^-(\eta(z(t),t),t)-u_1^-(\eta(x_1,t),t)\n\\
&=u_1^-(\eta(x_0,t),t)-u_1^-(\eta(z(t),t),t)+\int_{\mathfrak{r}_1(t)}  \nabla u_1^-\cdot\tau\ dl\n\\
&=u_1^-(\eta(x_0,t),t)-u_1^-(\eta(z(t),t),t)+\int_{\mathfrak{r}_1(t)}  \frac{\p u^-_1}{\p x_2}\ dx_2\,,
\label{i1}
\end{align}
where we have used that $ \et$ is the tangent vector to 
 $\mathfrak{r}_1(t)$ in the last equality of (\ref{i1}).

Next, we estimate the length of the vertical segment $\mathfrak{r}_1(t)$, by simply noticing that
\begin{align}
|\eta(x_0,t)-\eta(x_1,t)|^2&=|\eta(x_0,t)-\eta(z(t),t)|^2+|\eta(z(t),t)-\eta(x_1,t)|^2\n\\
&\ + 2 |\eta(x_0,t)-\eta(z(t),t)| |\eta(z(t),t)-\eta(x_1,t)|\cos\theta\,,
\label{i2}
\end{align}
where $\theta$ denotes  the angle between the two vectors $\eta(x_0,t)-\eta(z(t),t)$ and $\eta(z(t),t)-\eta(x_1,t)$.  Due to (\ref{eps_def}), 
the direction of the tangent vector $\tt$ on $\eta(\gamma_0( \epsilon) , t)$ in a small neighborhood of $\eta(x_0,t)$ is very close to horizontal; in particular,
$ |\tt(\eta(x,t),t) \cdot \et| < \epsilon $ for all $x \in \gamma_0( \epsilon )$ and $t \in [t_0( \epsilon) , T)$.
Hence, we  have that 
$\eta(x_0,t)-\eta(z(t),t)$ is in direction close to horizontal. On the other hand, $\eta(z(t),t)-\eta(x_1,t)$ is in the vertical direction. 
Therefore, $\theta$ is very close to $\frac{\pi}{2}$ which then, in turn, implies from (\ref{i2}) that
\begin{align*}
|\eta(x_0,t)-\eta(x_1,t)|^2&\ge|\eta(x_0,t)-\eta(z(t),t)|^2+|\eta(z(t),t)-\eta(x_1,t)|^2\n\\
&\qquad -\frac 1 2 |\eta(x_0,t)-\eta(z(t),t)| |\eta(z(t),t)-\eta(x_1,t)|\n\\
&\ge\frac 3 4 |\eta(x_0,t)-\eta(z(t),t)|^2+\frac 3 4|\eta(z(t),t)-\eta(x_1,t)|^2\,,
\end{align*}
which shows that the square of the length of the vertical segment satisfies
\begin{align}
|\eta(x_1,t)-\eta(z(t),t)|^2\le & \frac 4 3 |\eta(x_0,t)-\eta(x_1,t)|^2\n\\
\le & \frac 4 3 |\eta(x_0,t)-\eta(x_0,T)-\eta(x_1,t)+\eta(x_1,T)|^2\n\\
\le & \frac 4 3 \left|\int_T^t v^-(x_0,s)\ ds -\int_T^t v^-(x_1,s)
\ ds\right|^2\n\\
\le & \frac{16}{3} (T-t)^2 \|v^-\|^2_{L^\infty(\Gamma)}\n\\
\lesssim &\mathcal{M}^2 (T-t)^2
\,.
\label{i3}
\end{align}
thanks to Lemma \ref{lemma1}.

Then, with our estimate (\ref{u12epsilon}) on $\frac{\p u^-_2}{\p x_1}$  and the fact that $ \operatorname{curl} u^-=0$, we then have with (\ref{i3}) that
\begin{equation}
\label{i4}
\left|\int_{\mathfrak{r}_1(t)} \nabla u_1^-\cdot\tau\ dl\right|\lesssim {\mathcal{M}}\ (T-t) \frac{\epsilon}{T-t}=\epsilon {\mathcal{M}} \,.
\end{equation}

It remains to estimate the difference $u_1^-(\eta(x_0,t),t)-u_1^-(\eta(z(t),t)$ appearing on the right-hand side of (\ref{i1}).    Recall that  $\Gamma_0(t)=\eta(\gamma_0(\epsilon),t)$, for $\epsilon>0$ small enough fixed.   From Lemma \ref{lemma1}, 
 $v^-$ is continuous along $\Gamma_0$. Next, we have that $\eta$ is continuous and injective from $\overline{\gamma_0(\epsilon)}\times [0,T]$, into its image $\aleph$. Since $\eta$ is continuous and injective, and $\overline{\gamma_0 (\epsilon)}$ is closed, $\aleph$ is closed (as the sequential definition of a closed set is straightforwardly satisfied).
 As a result, $\eta^{-1}$ is also continuous and injective from $\aleph$ into $\overline{\gamma_0(\epsilon)}\times [t_0( \epsilon ),T]$, as the sequential definition of continuity is straightforwardly satisfied. 
 By composition, $u^-=v^-\circ\eta^{-1}$ is also continuous on $\aleph$. Since $z(t)\in\gamma_0(\epsilon)$ by step 1 of the proof of Theorem \ref{thm_geom2} (by switching the roles of $x_0$ and $x_1$), and $z(t)$ converges to $x_0$ as $t\rightarrow T$, we then have that $\eta(z(t),t)$ belongs to $\aleph$ and satisfies $$\lim_{t\rightarrow T} ( \eta(z(t),t)-\eta(x_0,t))=0\,.$$ Since we just established the continuity of $u^-$ on $\aleph$, and henceforth its uniform continuity in the compact set $\aleph$, we can infer from the previous limit and this uniform continuity that $u_1(\eta(x_0,t),t)-u_1(\eta(z(t),t)$ converges to zero as $t\rightarrow T$.

With this fact, we can infer from (\ref{i1}) and (\ref{i3})  that as $t\rightarrow T$
\begin{equation*}
|\duo(T)|\le \epsilon \mathcal{M}\,,
\end{equation*}
this being true for any $\epsilon>0$. Therefore,
\begin{equation}
\n
|\duo(T)|=0\,,
\end{equation}
which is a contradiction with (\ref{zzt300}).

We shall next  explain why a non-singular gradient of the velocity $u^-$ also does not allow for a splash singularity, which will finish the 
proof of our main result in the case of a single self-intersection.

\subsubsection{The case that $|\nabla \um(x,t)|$ remains bounded}  If $\|\nabla \um(\cdot ,t)\|_{L^ \infty (\Om(t))}$ is bounded on $[0,T]$, we can still obtain the differential equation
$\dn_t(t) = \M(t) \dn(t)$  using the same path integral that we used in the proof of Theorem \ref{thm_geom2},  with paths shown in Figure \ref{fig4}; 
in this case, however, the components of the matrix $\M$ are bounded on $[0,T]$.  From $\dn_t(t) = \M(t) \dn(t)$,
we see that
\begin{equation}
\n
\p_t|\dn|^2=2 {\M}_{11} |\dn_1|^2+2\left( {{\M}_{12}(t)+{\M}_{21}(t)} \right)\dn_1\ \dn_2 +2 {\M}_{22} |\dn_2|^2\,.
\end{equation}
with $\M_{ij}$ bounded for $i,j=1,2$.
Therefore,
\begin{equation*}
\p_t|\dn|^2\ge -C(\mathcal{\M}) |\dn|^2\,,
\end{equation*}
which then provides
\begin{equation}
\n
|\dn(t)|^2\ge |\dn(0)|^2 e^{-C(\mathcal{\M})t}\,.
\end{equation}
Since $\dn(0)\ne 0$, we then cannot have $\dn(T)=0$ for any finite $T$.

\subsubsection{The case that the region between $x_0$ and $x_1$ is $\Omega^+$:}
 In this case, we can still proceed with the same geometric constructions as before. The difference is that in this case, the matrix 
 $\M(t)$ has bounded coefficients (since $\nabla u^+$ is bounded in $L^\infty(\Omega^+(t)$), and therefore, we are in the same
  situation as the case treated previously where $|\nabla \um(x,t)|$ remains bounded, which leads to the impossibility of a splash
   singularity at time $T$.

\subsection{An arbitrary number (finite or infinite) of splash singularities at time $T$ is not possible}\label{multiple_splash}
We assume that an arbitrary number of simultaneous splash singularities occur at time $T>0$.   We now  focus on one of the many possible self-intersection points.
To this end,  let $x_0$ and $x_1$ be two points in $\Gamma$ such that $\eta(x_0,T)=\eta(x_1,T)$.   Let $\Gamma_0 \subset \Gamma$ be a local neighborhood
of $x_0$ and let $\Gamma_1 \subset \Gamma$ be a local neighborhood of $x_1$.

Then, there exists a sequence of points $x_0^n\in\Gamma_0$ converging to $x_0$, 
and of a sequence of points $x_1^n\in\Gamma_1$ converging to $x_1$  such that 
\begin{equation}
\label{mns1}
d_0^n:= d(\eta(x_0^n,T),\eta(\Gamma_1,T))\ne 0\,,\ \ \ \  d_1^n:= d(\eta(x_1^n,T),\eta(\Gamma_0,T))\ne 0 \ \ \forall n \in \mathbb{N}  \,,
\end{equation}
where $d$ denotes the distance function;  otherwise, if (\ref{mns1}) did not hold, then we would have non trivial neighborhoods $\gamma_0$ of $x_0$ and 
$\gamma_1$ of $x_1$ such that $\eta(\gamma_0,T)=\eta(\gamma_1,T)$, which means a splat singularity occurs at $t=T$,  and we treat that case below in Section \ref{splat_section}.

We continue to let $\eo$ denote a tangent unit vector to $\Gamma(T)$  at the splash contact point $\eta(x_0,T)$.  We then have, by the continuity of the tangent vector $\tt$ to the interface,  
that for  both  sequences of points, 
\begin{equation}
\label{mns2}
\left|\eo-\tt(\eta(x_0^n,T),T)\right|\le \epsilon\,,
\end{equation}
for $\epsilon>0$ fixed and $n$ large enough. 
We now call $z_1^n$ the orthogonal projection of $\eta(x_0^n,T)$ onto $\eta(\Gamma_1,T)$. We then have from (\ref{mns1}) that
\begin{equation}
\label{mns3}
\left|\eta(x_0^n,T)-z_1^n\right|=d_0^n>0\,.
\end{equation}
Furthermore, we denote by the unit vector $e_0^n$ the direction of the vector $\eta(x_0^n,T)-z_1^n$ 
(with base point at $z_1^n$ and ``arrow'' at $\eta(x_0^n,T)$).
By definition, $e_0^n$ points in the normal direction  to $\eta(\Gamma_1,T)$ at $z_1^n$ and by (\ref{mns2}), $e_0^n$ is close to $\et$.
For each point $x_0^n$, the segment  $(\eta(x_0^n,T),z_1^n)$ is contained in $\eta(\Omega^-,T)$.

By continuity of $\eta$ on $\Gamma\times [0,T]$ we also infer from (\ref{mns3}) that there exists a connected neighborhood 
$\gamma_0^n$ of $x_0^n$ on $\Gamma$, of length $L_n>0$, such that for any $x\in\gamma_0^n$ we have
\begin{equation}
\label{mns4}
 d(\eta(x,T),\eta(\Gamma_1,T))\ge \frac{d_0^n}{2};
 \end{equation}
moreover, the direction of the vector $\eta(x,T)-P_{\eta(\Gamma_1,T)}(\eta(x,T))$,  normal to $\eta(\Gamma_1,T)$ at $P_{\eta(\Gamma_1,T)}(\eta(x,T))$,  is 
 close to $\et$, where $P_{\eta(\Gamma_1,T)}$ denotes the orthogonal projection onto $\eta(\Gamma_1,T)$.

Note that for each $x\in\gamma_0^n$, the segment $(\eta(x,T),P_{\eta(\Gamma_1,T)}(\eta(x,T)))$ is contained in $\eta(\Omega^-,T)$.
By continuity of the direction of these vectors, we then have that 
\begin{equation}
\label{mns5}
\omega_n=\cup_{x\in\gamma_0^n}(\eta(x,T),P_{\eta(\Gamma_1,T)}(\eta(x,T)))\,,
\end{equation} 
is an open set contained in $\eta(\Omega^-,T)$.
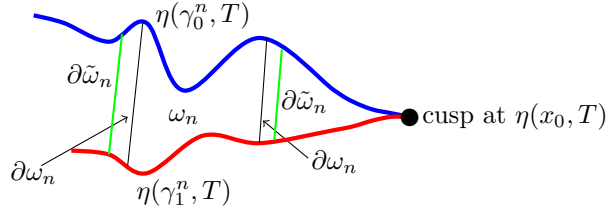
\begin{figure}[ht]
\begin{center}
\begin{tikzpicture}[scale=0.5]

\draw (-1.5,3.2) node { $\eta(\gamma_0^n, T)$}; 
\draw (-2,-1.5) node { $\eta(\gamma_1^n, T)$}; 
\draw (-4.5,1.6) node { $\partial \tilde \omega_n$}; 
\draw (-6,-1) node { $\partial  \omega_n$}; 
\draw[->]  (-5.8,-.8) -- (-3.5,.5); 
\draw (2,-.7) node { $\partial  \omega_n$}; 
\draw[->]  (1.3,-.6) -- (.1,.4); 
\draw (1.2,.9) node { $\partial \tilde \omega_n$}; 
\draw (-2,.5) node { $\omega_n$}; 
\draw[color=blue,ultra thick] plot[smooth,tension=.6] coordinates{(-6,3.2) (-5.2,3)  (-4,2.5)  (-3, 3) (-2,1.2)  (0,2.6)   (2.0, 1.2) 
                                                                                                                     (2.5, .9) (3, .7) (3.5, .6) (4, .5)};
\draw[color=red,ultra thick] plot[smooth,tension=.6] coordinates{(-5, -.4) (-4,-.5)  (-3,-1)  (-1.5,0)    (0,-.2)  (2., .15)  (2.5,.25)  (3,.4) (3.5, .5) (4,.5)};
 \draw (4,.5) node { $\newmoon$}; 
\draw (6.8,.5) node { cusp at $\eta(x_0,T)$}; 

\draw[color=green, thick] plot[smooth,tension=.6] coordinates{  (-3.65,2.7)  (-4,-.5) };

\draw[color=green, thick] plot[smooth,tension=.6] coordinates{  (.6, 2.3)  (.4, -.1)  };

\draw  (-3.1,3) -- (-3.5,-.8); 
\draw  (0.2,2.6) -- (0,-.2); 

\end{tikzpicture} 
\end{center}
\caption{The open set $\omega_n$ is contained in the larger  open set $\tilde \omega_n$}\label{figwn}
\end{figure}
Furthermore,  $\partial \omega_n$ contains the set $ \eta(\gamma_0^n,T)$ of length $L_n>0$ (as its top boundary), and by continuity of the directions, $\partial
\omega_n$ also contain a connected subset
 $\eta(\gamma_1^n,T)$ of $\eta(\Gamma_1,T)$, of length greater than $\frac{L_n}{2}$ (as its bottom boundary).   
 Because $\omega_n$ does not intersect the cusp which occurs at the 
 contact point, we  define the open set   $ \tilde \omega_n\supset \omega_n$, such that  the lateral part of $\partial \tilde \omega_n$ is parallel to
 the lateral part of $\partial \omega _n$ and connects 
 $ \eta(\Gamma_0,T)$ and $\eta(\Gamma_1,T)$ as shown in Figure \ref{figwn}.

Next, we introduce the stream functions $\psi^\pm$ such that $u^\pm(\cdot ,T)=\nabla^\perp\psi^\pm$, and we recall that $u^+$ (and hence $\psi^+$) has the good
regularity on $\Gamma(t)$ for $t \in [0,T]$, given by (\ref{bounds}).   Let $\mathscr{W}_n$ be an open set such that $\omega_n \subset \mathscr{W}_n \subset
\tilde \omega _n$.
Let $0\le \vartheta_n \le 1$ denote a $C^ \infty $ cut-off function which is equal to $1$ in
$\overline{\omega_n}$ and equal to $0$ on $\overline{\tilde \omega _n}/ \mathscr{W}_n$.

We have that $\psi^-$  is an $H^1(\Omega^-(T))$ weak solution of $ \Delta \psi^- =0$ in $\Omega^-(T)$ and $\psi^- =\psi^+$ on $\partial \Omega^-(T)$.
Then
$\vartheta_n\psi^-$  satisfies
\begin{alignat*}{2}
-\Delta( \vartheta_n \psi^-)&=-\psi^- \Delta \vartheta_n -2 \nabla \vartheta_n \cdot \nabla \psi^-\,,\ \ &&  \text{ in }\ \tilde \omega_n\,,\\
\vartheta_n \psi^- &=\psi^+ && \text{ on }\ \eta(\Gamma_0,T)\cup\eta(\Gamma_1,T)\cap \partial \tilde \omega_n\,,
\end{alignat*}
and as $\psi^+\in H^{3.5}(\eta(\Gamma_0,T))\cup H^{3.5}(\eta(\Gamma_1,T))$,  standard elliptic regularity shows that 
\begin{equation*}
\psi^- \in H^{4} (\omega_n)\,,
\end{equation*}
and therefore that
\begin{equation}
\label{mns6}
\nabla u^-(\cdot,T) \in H^{3} (\omega_n)\subset L^\infty(\omega_n)\,.
\end{equation}

\def\Dnr{ \mathscr{D}_n^r }
\def\Dnl{ \mathscr{D}_n^l}

Let $\Dnr$ denote the pre-image of $\omega_n$ under the map $\eta(\cdot,T)$. Let us assume that $\partial \Dnr \cap \Gamma_0$ lies to the right of $x_0$.
Since $\overline{\omega_n}$ does not intersect  the splash singularity at time $T$, 
$\eta(\cdot,T)$ is bijective and continuous from $\Dnr$ into $\omega_n$, and therefore $\Dnr$ is an open connected set.

Furthermore, $\nabla u^-\circ\eta$ is  also continuous on $\overline{\Dnr}\times [0,T]$ which, thanks to (\ref{mns6}), shows that for all $t\in [0,T]$,
\begin{equation}
\label{mns7}
\|\nabla u^-(\cdot,t)\|_{L^\infty(\eta(\Dnr,t))}\le \mathcal{M}^r_n\,.
\end{equation}

 We can also choose the sequence $x_0^n$ to lie on the left of $x_0$ (otherwise, we would have a splat singularity). This similarly gives an open neighborhood 
 $\Dnl$ of the same type as $\Dnr$ satisfying 
for all $t\in [0,T]$,
\begin{equation}
\label{mns8}
\|\nabla u^-(\cdot,t)\|_{L^\infty(\eta(\Dnl,t))}\le \mathcal{M}^l_n\,.
\end{equation}

\def\Cnr{\mathscr{C}_n^r}
\def\Cnl{\mathscr{C}_n^l}
 
We now denote by $\Cnr$ (respectively $\Cnl$)  the lateral part of $\partial\Dnr$ (respectively $\partial \Dnl$) joining $\Gamma_0$ to $\Gamma_1$, 
and we denote by $\aleph_n$ the open set delimited by 
$\Cnr$;  the subset of $\Gamma_0$ containing $x_0$ linking $\Cnr$ to $\Cnl$;  $\Cnl$; and the subset of $\Gamma_1$ containing $x_1$ linking $\Cnl$ to $\Cnr$.

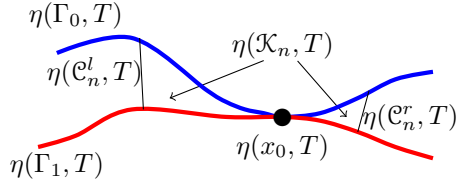
\begin{figure}[ht]
\begin{center}
\begin{tikzpicture}[scale=0.5]

\draw (-1.5,3.2) node { $\eta(\Gamma_0, T)$}; 
\draw (-2,-0.8) node { $\eta(\Gamma_1, T)$}; 
\draw (4,2.4) node { $\eta(\mathscr{K}_n,T)$}; 
\draw[->]  (3.5,2) -- (1,1); 
\draw[->]  (4.4,1.99) -- (5.7,.5); 
\draw (7.4,.5) node { $\eta(\mathscr{C}^r_n,T)$}; 
\draw (-.99,1.7) node { $\eta(\mathscr{C}^l_n,T)$}; 



\draw[color=blue,ultra thick] plot[smooth,tension=.6] coordinates{ (-2,2.2)  (0,2.6)   (2.0, 1.2) 
                                                                                                                     (2.5, .9) (3, .7) (3.5, .6) (4, .5) (5, .6) (6, 1) (7, 1.5) (8, 1.7)};
\draw[color=red,ultra thick] plot[smooth,tension=.6] coordinates{ (-2.5, -.3) (-1.5,0)    (0,.7)   (2.5,.5)  (4,.5) (5, .4) (6, .1) (7, -.3) (8, -.6)};
 \draw (4,.5) node { $\newmoon$}; 
\draw (4,-.3) node {$\eta(x_0,T)$};

\draw  (0.2,2.6) -- (.3,.7); 
\draw  (6,.1) -- (6.3,1.2); 

\end{tikzpicture} 
\end{center}
\caption{The region in which we apply the maximum and minimum principle.}\label{figwn0}
\end{figure}

For $n$ large enough, we will have estimate (\ref{mns2}) satisfied at any point of $\partial\aleph_n\cap\Gamma$, with moreover the length of $\partial\aleph_n\cap\Gamma$ being of order $\epsilon$. This then implies in a way similar to Step 4 of Theorem \ref{thm_blowup}, that
\begin{equation}
\label{mns9}
\left\|\frac{\partial u_2^-}{\partial x_1}(\cdot,t)\right\|_{L^\infty(\eta(\partial\aleph_n\cap\Gamma,t))}\le \frac{\epsilon}{T-t}\,,
\end{equation}
for any $t<T$. Moreover, for $t$ close enough to $T$, the maximum of the two constants $\mathcal{M}_n^r$ and $\mathcal{M}_n^l$  of (\ref{mns7}) and (\ref{mns8}) will become smaller than $\frac{\epsilon}{T-t}$.  Thus, for any $t<T$ close enough to $T$,
\begin{equation*}
\left\|\frac{\partial u_2^-}{\partial x_1}(\cdot,t)\right\|_{L^\infty(\eta(\partial\aleph_n,t))}\le \frac{\epsilon}{T-t}\,,
\end{equation*}
which by application (for each fixed $t<T$ close enough to $T$) of the maximum and minimum principle for the harmonic function $\frac{\partial u_2^-}{\partial x_1}(\cdot,t)$ on the open set $\eta(\aleph_n,t)$ provides
\begin{equation}
\label{mns10}
\left\|\frac{\partial u_2^-}{\partial x_1}(\cdot,t)\right\|_{L^\infty(\eta(\aleph_n,t))}\le \frac{\epsilon}{T-t}\,.
\end{equation}
 We can then apply the same arguments as in the Sections \ref{sec_geometry} and \ref{sec_singlesplash} to exclude a splash singularity associated with $x_0$ and $x_1$ simply by working in the neighborhood of size $C \epsilon$ ($C$ bounded from below away from $0$) where (\ref{mns10}) holds.

\subsection{A splat singularity is not possible}\label{splat_section}
We now assume the existence of a splat singularity: 
there exists two disjoint closed subsets of $\Gamma$, which we denote by $\Gamma_0$ and $\Gamma_1$, with non-zero measure, 
such that contact occurs at time $t=T$ and $\eta(\Gamma_0,T)=\eta(\Gamma_1,T)$. We furthermore assume that the set 
\begin{equation}
\label{nsplat1}
\mathcal{S}_0=\{x\in \Gamma_0\ : \lim_{t\rightarrow T} |\nabla u^-(\eta(x,t),t)|=\infty\}\,,
\end{equation}
has a non-empty interior, and denote by $x_0$ and $y_0$ two distinct points on $\mathcal{S}_0$ such that the curve $\gamma_0\subset\Gamma_0$, which connects the points $x_0$ to $y_0$, is contained in $\mathcal{S}_0$. 
We denote by $L(t)$ the length of the curve $\eta(\gamma_0,t)$, which is given by
\begin{equation}
\label{nsplat2}
L(t)=\int_{\gamma_0} |\eta'(x,t)|\ dl\,.
\end{equation}
By Lemma \ref{lemma2}, for any $x\in\mathcal{S}_0$,  $\lim_{t\rightarrow T} \eta'(x,t)=0$, and from Lemma \ref{lemma1}, we have the uniform bound $\sup_{t \in [0,T]} |\eta'|_{L^\infty(\Gamma)}\le \mathcal{M}$ where 
$\mathcal{M}$ is independent of $t<T$. Therefore, by the  dominated convergence theorem,
\begin{equation}
\label{nsplat3}
\lim_{t\rightarrow T} L(t)=0\,,
\end{equation}
which shows that $\eta(x_0,T)=\eta(y_0,T)$, which is a contradiction with the fact that $\eta$ is injective on $\Gamma_0\times [0,T]$. Therefore our assumption that $\mathcal{S}_0$ has non-empty interior was wrong, which shows that this set has empty interior. Therefore the set
\begin{equation}
\label{nsplat4}
\mathcal{B}_0=\{x\in \Gamma_0\ : \lim_{t\rightarrow T} |\nabla u^-(\eta(x,t),t)|<\infty\}\,,
\end{equation}
is dense in $\Gamma_0$. Furthermore, by Lemma
\ref{lemma1},  $|v'( \cdot ,t)|_{L^\infty(\Gamma)}\le \mathcal{M}$ where $\mathcal{M}$ is independent of $t<T$. Hence, by Lemma \ref{lemma2},
$\mathcal{B} _0$   is defined equivalently as
\begin{equation*}
\mathcal{B}_0=\{x\in \Gamma_0\ : |\eta'(x,T)|>0\}\,,
\end{equation*}
which shows that this set is open in $\Gamma_0$. Therefore, $\mathcal{B}_0$ is an open and dense subset of $\Gamma_0$.

Now since $\eta$ is continuous and injective from $\Gamma_0\times [0,T]$ onto its image, it also is a homeomorphism from $\Gamma_0\times [0,T]$ onto its image, which shows that $\eta(\mathcal{B}_0,T)$ is open and dense in 
$\eta(\Gamma_0,T)$.  With
\begin{equation}
\label{nsplat5}
\mathcal{B}_1=\{x\in \Gamma_1\ : \lim_{t\rightarrow T} |\nabla u^-(\eta(x,t),t)|<\infty\}\,,
\end{equation}
 the same argument shows that  $\eta(\mathcal{B}_1,T)$
is also open and dense in $\eta(\Gamma_1,T)$.  Our assumption of a splat singularity at $t=T$  means that 
$\eta(\Gamma_0,T)=\eta(\Gamma_1,T)$, showing that 
$\eta(\mathcal{B}_0,T)$ and $\eta(\mathcal{B}_1,T)$ are two open and dense sets in $\eta(\Gamma_0,T)=\eta(\Gamma_1,T)$. 
They, therefore, have an open and dense intersection. 

Let $\mathcal{Z} $ be a point in this intersection with tangent direction given by $\eo$.  By definition, there exists $z_0\in\mathcal{B}_0$ and $z_1\in\mathcal{B}_1$ such that $\eta(z_0,T)=\eta(z_1,T)$. 
We are therefore back to the case where  interface self-intersection occurs with non-singular $\nabla u^-$ 
(from the definition of the sets $\mathcal{B}_0$ and $\mathcal{B}_1$), except that we do not have an estimate for $\nabla u^-$ valid for the entire interface 
$\Gamma(t)$.

We now consider two open connected curves $\gamma_0\subset\mathcal{B}_0$ and $\gamma_1\subset\mathcal{B}_1$ such that for any point $z_0\in\gamma_0$ there exist a point $z_1\in\gamma_1$ such that $\eta(z_0,T)=\eta(z_1,T)$. For $t\in [T_0,T)$, $T_0$ being very close to $T$, the two curves $\eta(\gamma_0,t)$ and $\eta(\gamma_1,t)$  are very close to each 
other, and at each point, have tangent vector close to $\eo$ (to ensure this, 
 if necessary,  we take a sufficiently small subset of each of these two curves). 
 
Furthermore, from the definition of $\mathcal{B}_0$, we have that the length of the curve $\eta(\gamma_0,t)$ for $t\in [T_0,T)$, $T_0$ being very close to $T$, is close to a number $L_0>0$ (which is the length of $\eta(\gamma_0,T)=\eta(\gamma_1,T)$). Similarly,
the length of the curve $\eta(\gamma_1,t)$ for $t\in [T_0,T)$,  is close to $L_0$. 

We now fix two distinct and close-by points $\eta(z_0,T_0)$ and $\eta(\tilde z_0,T_0)$ on 
$\eta(\gamma_0,T_0)$ such that $|\eta(z_0,T_0) - \eta(\tilde z_0,T_0)| <\frac{L_0}{200}$,   and the distance between each of these points and 
the complement of $\eta(\gamma_0,T_0)$ in $\eta(\Gamma_0,T_0)$ is greater than $\frac{L_0}{4}$. 
By taking $T_0$ closer to $T$ if necessary, we can assume that for any $t\in[T_0,T]$ the distance between $\eta(z_0,t)$ (or $\eta(\tilde z_0,t)$) and the complement of $\eta(\gamma_0,t)$ in $\eta(\Gamma_0,t)$ is greater than $\frac{L_0}{5}$.

As shown in Figure 8, 
we now define $\eta(z_1,T_0)$ as being the intersection of the vertical line passing through $\eta(z_0,T_0)$ and $\eta(\gamma_1,T_0)$. This defines a unique point since the tangent vector to $\eta(\gamma_1,T_0)$ is close to $\eo$, and furthermore the segment 
$(\eta(z_0,T_0),\eta(z_1,T_0))$ is contained in $\eta(\Omega^-,T_0)$. 
Similarly, we define $\eta(\tilde z_1,T_0)$ as being the intersection of the vertical line passing through $\eta(\tilde z_0,T_0)$ and $\eta(\gamma_1,T_0)$. This defines a unique point, with the segment $(\eta(\tilde z_0,T_0),\eta(\tilde z_1,T_0))$  contained in 
$\eta(\Omega^-,T_0)$.

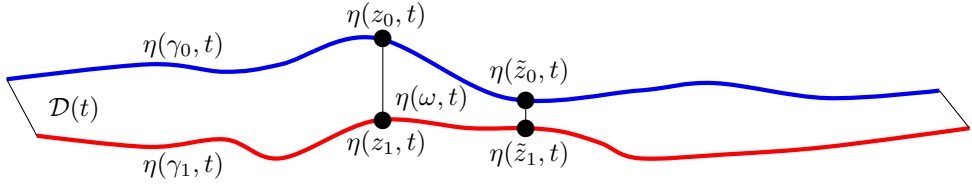
\begin{figure}[ht]
\begin{center}
\begin{tikzpicture}[scale=0.5]

\draw (-5.1,2.7) node { $\eta(\gamma_0, t)$}; 
\draw (-5.1,-0.5) node { $\eta(\gamma_1, t)$};



\draw[color=blue,ultra thick] plot[smooth,tension=.6] coordinates{(-9.8, 1.8) (-6, 2.2) (-4,2) (-2.5,2.2)  (0,2.9)   
                                                                                                                      (3.5, 1.3)  (7, 1.5) (9, 1.7) (12, 1.3) (15, 1.5)};
\draw[color=red,ultra thick] plot[smooth,tension=.6] coordinates{(-9, .3) (-6, 0) (-4, .2) (-2.5, -.3)     (0,.7)   (2.5,.5)  (4,.5) (5, .4) (6, .1) 
                                                                                                                  (7, -.3) (9.3, -.2) (12, 0) (15.8, .5)};

\draw  (15.8,.5) -- (15,1.5); 
\draw  (-9.8,1.8) -- (-9,.3); 
                                                 
\draw (-8,1) node { $\mathcal{D} (t)$};                                                                                                                   
                                                                                                                  
 \draw (.2,.7) node { $\newmoon$}; 
 \draw (.2,2.9) node { $\newmoon$}; 

 \draw (4,1.25) node { $\newmoon$}; 
 \draw (4,.5) node { $\newmoon$}; 

\draw (.3,3.5) node { $\eta(z_0, t)$}; 
\draw (.3,.1) node { $\eta(z_1, t)$}; 

\draw (1.5,1.3) node { $\eta(\omega, t)$}; 

\draw (4.1,2.) node { $\eta(\tilde z_0, t)$}; 
\draw (4.1,-.2) node { $\eta(\tilde z_1, t)$}; 
\draw  (0.2,2.9) -- (.2,.7); 
\draw  (4,1.25) -- (4,.5); 

\end{tikzpicture} 
\end{center}
\caption{That portion of $\Omega^-(t)$ being squeezed together by the approaching splat singularity.}\label{fig8}
\end{figure}

By taking $T_0$ closer to $T$ if necessary, we can assume that for any $t\in[T_0,T]$ the distance between $\eta(z_1,t)$ (or $\eta(\tilde z_1,t)$) and the complement of $\eta(\gamma_1,t)$ in $\eta(\Gamma_1,t)$ is greater than $\frac{L_0}{5}$.
By further taking $T_0$ closer to $T$, if necessary,  we can also assume that
\begin{equation}
\label{250714.1}
\operatorname{dist} (\eta(\gamma_0,T_0),\eta(\gamma_1,T_0))\le \frac{L_0}{100}\,,
\end{equation}
and also that
\begin{equation}
\label{250714.2}
\left(1+ \sup_{[0,T]} \| v^-(  \cdot , t)\|_{L^ \infty ( \Omega ^-)} \right) (T-T_0)< \frac{L_0}{12}\,. 
\end{equation}

We denote by $\eta(\omega,T_0)$ the domain enclosed by the two vertical segments $[\eta(z_0,T_0),\eta(z_1,T_0)]$, $[\eta(\tilde z_0,T_0),\eta(\tilde z_1,T_0)]$, the portion of the curve $\eta(\gamma_0,T_0)$ linking $\eta(z_0,T_0)$ to $\eta(\tilde z_0,T_0)$, and the portion of the curve $\eta(\gamma_1,T_0)$ linking $\eta(z_1,T_0)$ to $\eta(\tilde z_1,T_0)$.
 This domain is contained in $\eta(\Omega^-,T_0)$ (which justifies its name $\eta(\omega,T_0)$, for $\omega\subset\Omega^-$), and has a 
 non-zero area $A_0$ (since its boundary contains two distinct vertical lines and two near horizontal and distinct curves).

By incompressibility, for any $t\in[T_0,T)$, the area of $\eta(\omega,t)$ remains a constant which we call $A_0$. 
Now, as $t\rightarrow T$, the two curves $\eta(\gamma_0,t)$ and $\eta(\gamma_1,t)$ get  close to a splat contact (which occurs at $t=T$);
therefore, the domain $\mathcal{D}(t)$ between these two curves and the two  short lateral segments joining them has an area converging to zero (see Figure \ref{fig8}). Therefore for $t<T$ close enough to $T$ we cannot have $\eta(\omega,t)\subset\mathcal{D}(t)$, as points on the lateral edges of
$\eta(\omega,t)$ would be pushed-out of the lateral boundaries of $ \mathcal{D} (t)$.

Therefore, we have at least a point (in fact a subset of non zero area) $\eta(z,t)$ ($z\in\omega$) such that
\begin{equation}
\label{250714.3}
\left|\eta(z_0,t)-\eta(z,t)\right|\ge \frac{L_0}{5}\,.
\end{equation}
From (\ref{250714.1}), and from the fact that the boundary of $\eta(\omega,T_0)$ is comprised of two vertical segments of length less than $\frac{L_0}{100}$ and of two near horizontal curves of length less than $\frac{L_0}{100}$, we have that
\begin{equation}
\label{250714.4}
\left|\eta(z_0,T_0)-\eta(z,T_0)\right|\le \frac{L_0}{50}\,.
\end{equation}
From (\ref{250714.3}) and (\ref{250714.4}) we then have
\begin{equation}
\label{250714.5}
\left|\int_{T_0}^t [v(z_0,s)-v(z,s)] ds\right|\ge \frac{L_0}{5}-\frac{L_0}{50}\ge \frac{L_0}{6}\,.
\end{equation}
Using (\ref{250714.5}),we infer that
\begin{equation*}
 2(T-T_0) \sup_{[0,T]} \|v\|_{L^\infty(\Omega^-)} \ge \frac{L_0}{6}\,,
\end{equation*}
which is in contradiction with (\ref{250714.2}).    This establishes the impossibility of a splat singularity at time $t=T$. 

As our  analysis  was reduced to a local neighborhood of any assumed splat singularity, as shown in Figure \ref{fig8},  this means that any combination of splat and splash singularities at time $t=T$ can be analyzed in the same way.   This finishes the proof of the exclusion of splat or splash  singularities in finite time.

\section*{Acknowledgments}
DC was supported by the Centre for Analysis and Nonlinear PDEs funded by the UK EPSRC grant EP/E03635X and the Scottish Funding Council.  SS was supported by the National Science Foundation under grants DMS-1001850 and DMS-1301380, by OxPDE at the University of Oxford, 
and by the Royal Society Wolfson Merit Award.   We are grateful to the referees for the time and care in reading our manuscript and for the excellent suggestions that have improved the presentation greatly.


\begin{thebibliography}{50}

\bibitem{Adams1978} R.A. Adams, \emph{Sobolev Spaces},  Academic Press, 1978.

\bibitem{AlBuZu2012} T.~Alazard, N.~Burq and C.~Zuily, \emph{On the Cauchy problem for gravity water waves},  Invent. Math.,  March, (2014).
arxiv:1212.0626.


\bibitem{AlDe2013a} T.~Alazard and J.-M.~Delort, \emph{Global solutions and asymptotic behavior for two dimensional gravity water waves},
Preprint, (2013), arXiv:1305.4090.

\bibitem{AlDe2013b}T.~Alazard and J.-M.~Delort, \emph{Sobolev estimates for two dimensional gravity water waves}, Preprint, (2013),
arXiv:1307.3836.


\bibitem{AlLa2008} B. Alvarez-Samaniego and D. Lannes, \emph{ Large time existence for 3D water-waves and
asymptotics},  Invent. Math., {\bf 171},  (2008), 485--541.

\bibitem{AmMa2005} D.M. Ambrose and N. Masmoudi, \emph{The zero surface
tension limit of two-dimensional water waves}, 
Comm. Pure Appl. Math.,  {\bf 58} (2005), 1287--1315.

\bibitem{AmMa2007} D.M. Ambrose and N. Masmoudi, \emph{
Well-posedness of 3D vortex sheets with surface tension},
Comm. Math. Sci., {\bf 5}(2), (2007), 391-430.

\bibitem{AmMa2009}
\newblock D.~Ambrose and N.~Masmoudi,
\newblock \emph{The zero surface tension limit of three-dimensional water waves},
\newblock Indiana Univ. Math. J., \textbf{58} (2009),  479--521.




\bibitem{BeHoLo1993} J.T.~Beale, T. Hou and J. Lowengrub, 
\emph{ Growth rates for the linearized motion of fluid interfaces away 
from equilibrium,} Comm. Pure Appl. Math., {\bf 46}, (1993) 1269--1301.

\bibitem{bear}
J.~Bear.
\newblock {\em {D}ynamics of fluids in porous media}.
\newblock Dover Publications, 1988.

\bibitem{CaCoFeGaLo2011a} A. Castro, D. C\'{o}rdoba, C. Fefferman, F. Gancedo, and M. L\'{o}pez-Fern\'{a}ndez,
\emph{ Rayleigh-Taylor breakdown for the Muskat problem with applications to water waves}, Ann. of Math., 
{\bf 2}, (2011), to appear.

\bibitem{CaCoFeGaLo2011b} A. Castro, D. C\'{o}rdoba, C. Fefferman, F. Gancedo, and M. L\'{o}pez-Fern\'{a}ndez,
\emph{ Turning waves and breakdown for incompressible flows},  Proceedings of the National Academy of Sciences,
{\bf 108}, (2011), 4754--4759.


\bibitem{CaCoFeGaGo2013} A. Castro, D. C\'{o}rdoba, C. Fefferman, F. Gancedo, and M. G\'{o}mez-Serrano,
\emph{Finite time singularities for the free boundary incompressible Euler equations}, Ann. of Math., {\bf  178},  (2013),  1061--1134.

\bibitem{Cr1985} W. Craig, \emph{ An existence theory for water waves and 
the Boussinesq and Korteweg-de Vries scaling limits,} Comm. Partial 
Differential Equations, {\bf 10} (1985), no. 8, 787-1003.



\bibitem{ChCoSh2008} C.H. A.~Cheng, D.~Coutand and S.~Shkoller, \emph{
On the motion of vortex sheets with surface tension in the 3D Euler
equations with vorticity}, Comm. Pure Appl. Math., {\bf 61}(12),
(2008), 1715--1752.

\bibitem{ChCoSh2010} C.H. A.~Cheng, D.~Coutand and S.~Shkoller, \emph{
On the limit as the density ratio tends to zero for two perfect incompressible 3-D fluids separated by a surface of discontinuity}, Comm. Partial Differential Equations, 
{\bf 35}, (2010), 817--845


\bibitem{ChLi2000} D. Christodoulou and H. Lindblad,
\emph{ On the motion of the free surface of a liquid},
Comm. Pure Appl. Math.,  {\bf 53} (2000), 1536--1602.

%

\bibitem{CoSh2007}
D.~Coutand and S.~Shkoller,
\emph{ Well-posedness of the free-surface incompressible Euler
equations with or without surface tension},
 J. Amer. Math. Soc., {\bf 20},  (2007), 829--930.
 
 \bibitem{CoSh2010} D.~Coutand and S.~Shkoller,
\emph{A simple proof of well-posedness for the free-surface incompressible Euler equations}, Discrete Contin. Dyn. Syst. Ser. S,
{\bf 3},  (2010),  429--449.

 \bibitem{CoSh2014} D.~Coutand and S.~Shkoller, \emph{On the Finite-Time Splash and Splat Singularities for the 3-D Free-Surface Euler Equations},
  Comm. Math. Phys., {\bf 325}, (2014),  143--183.

\bibitem{DaMo1990} B. Dacorogna and J. Moser, \emph{On a partial differential equation involving the Jacobian determinant},
Annales de l'Institut Henri Poincar\'{e} (C) Analyse non lin\'{e}aire,
{\bf 7},  (1990),  1--26.

\bibitem{DiEb2014} M. Disconzi and D. Ebin, \emph{On the limit of large surface tension for a fluid motion with free boundary}.  
 Comm. Partial Differential Equations, {\bf 39}, (2014), 740--779.
 
\bibitem{DiEb2015} M. Disconzi and D. Ebin, \emph{The free boundary Euler equations with large surface tension}, (2015), 
arXiv:1506.02094.

\bibitem{Eb1987}  D. Ebin, \emph{ The equations of motion of a perfect fluid
with free boundary are not well posed}, Comm. Part. Diff. Eq., {\bf  10},
(1987), 1175--1201.

\bibitem{FeIoLi2013} C.~Fefferman, A.D.~Ionescu, and V.~Lie, \emph{On the absence of ``splash'' singularities in the case of two-fluid interfaces}, Preprint,
(2013), arXiv:1312.2917.

\bibitem{GaSt2013} F. Gancedo and R. Strain,   \emph{Absence of splash singularities for SQG sharp fronts and the Muskat problem},  Preprint, 
(2013), arXiv:1309.4023.

\bibitem{GeMaSh2009} P. Germain, N. Masmoudi, and J. Shatah, \emph{Global solutions for the gravity water waves
equation in dimension 3},  C. R. Math. Acad. Sci. Paris, {\bf 347}  (2009), 897--902.

\bibitem{HuIfTa2014} J. Hunter, M. Ifrim, and D. Tataru, \emph{Two dimensional water waves in holomorphic coordinates},  (2014),
 arXiv:1401.1252.


\bibitem{HeleShaw:motion-viscous-fluid-parallel-plates}
H.~S. Hele-Shaw.
\newblock On the motion of a viscous fluid between two parallel plates.
\newblock {\em Trans. Royal Inst. Nav. Archit.}, 40:218, 1898.

\bibitem{IfTa2014a}  M. Ifrim and D. Tataru, \emph{Two dimensional water waves in holomorphic coordinates II: global solutions},  (2014),
 arXiv:arXiv:1404.7583.
 
 \bibitem{IfTa2014b}  M. Ifrim and D. Tataru, \emph{The lifespan of small data solutions in two dimensional capillary water waves},  (2014),
 arXiv:arXiv:1406.5471.


\bibitem{IoPu2014} A.D.~Ionescu and F.~Pusateri, \emph{Global solutions for the gravity water waves system in 2d},  Invent. Math.,  May, (2014),
arXiv:1303.5357.

\bibitem{La2005} D. Lannes, \emph{ Well-posedness of the water-waves 
equations}, J. Amer. Math. Soc., {\bf 18}, (2005) 605--654.

\bibitem{Li2004} H. Lindblad, \emph{ Well-posedness for the motion of 
an incompressible liquid with free surface boundary},
Annals of Math.,  {\bf 162},  (2005),   109--194.

\bibitem{Na1974} V.I. Nalimov, \emph{ The Cauchy-Poisson Problem} 
(in Russian),  Dynamika Splosh. Sredy, {\bf 18}(1974),104--210.

\bibitem{NB}
D.~Nield and A.~Bejan.
\newblock {\em {C}onvection in porous media}.
\newblock Springer Verlag, 2006.

\bibitem{Pusateri2011}
\newblock F.~Pusateri,
\newblock \emph{On the limit as the surface tension and density ratio tend to zero for the two-phase Euler equations},
\newblock J. Hyperbolic Differ. Equ.,  \textbf{8}  (2011),  347--373.



\bibitem{Ra1878} L. Rayleigh, \emph{ On the instability of jets},  Proceedings of the London Mathematical Society,
{\bf s1-10}(1), (1878), 4--13.


\bibitem{Temam1984}
\newblock R. Temam,
\newblock ``Navier-Stokes Equations. Theory and Numerical
Analysis,"
\newblock Third edition. Studies in Mathematics and its Applications, 2.
North-Holland Publishing Co., Amsterdam, 1984.


\bibitem{ShZe2008}
\newblock J.~Shatah and C.~Zeng,
\newblock \emph{Geometry and a priori estimates for free boundary problems of the Euler equation},
\newblock Comm. Pure Appl. Math.,  \textbf{61}  (2008),  698--744.

\bibitem{ShZe2011}
\newblock J.~Shatah and C.~Zeng,
\newblock \emph{Local well-posedness for fluid interface problems},
\newblock  Arch. Ration. Mech. Anal.,  \textbf{199}  (2011),  653--705.


\bibitem{Taylor1950} G. Taylor, \emph{The instability of liquid surfaces when accelerated in a direction perpendicular
to their planes  I.},  Proc. Roy. Soc. London. Ser. A.,  {\bf 201} (1950), :192--196.

\bibitem{Wu1997} S. Wu, \emph{ Well-posedness in Sobolev spaces of the 
full water wave problem in 2-D}, Invent. Math., {\bf 130} (1997), 39--72.

\bibitem{Wu1999} S. Wu, \emph{  Well-posedness in Sobolev spaces of the 
full water wave problem in 3-D}, J. Amer. Math. Soc., {\bf 12} (1999), 
445--495.

\bibitem{Wu2009} S. Wu, \emph{Almost global wellposedness of the 2-D full water wave problem},  Invent. Math.,
{\bf 177} (2009), 45--135.

\bibitem{Wu2011} S. Wu, \emph{Global wellposedness of the 3-D full water wave problem},  Invent. Math.,
{\bf 184} (2011), 125--220.


\bibitem{Yo1982} H. Yosihara, \emph{  Gravity Waves on the Free Surface of 
an Incompressible Perfect Fluid}, Publ. RIMS Kyoto Univ., {\bf 18} (1982),
49--96.

\bibitem{ZhZh2008}
\newblock P.~Zhang and Z.~Zhang,
\newblock \emph{On the free boundary problem of three-dimensional incompressible Euler equations}, 
\newblock  Comm. Pure Appl. Math., \textbf{61}, (2008), 877--940.

\end{thebibliography}
\end{document}